\newtheorem{thm}{Theorem}[section]
\newtheorem{prop}[thm]{Proposition}
\newtheorem{lem}[thm]{Lemma}
\theoremstyle{remark}
\newtheorem{rem}[thm]{Remark}
\begin{document}

\title{Interface dynamics for an Allen-Cahn-type  \\ equation governing a matrix-valued field}
\author{Dong Wang}
\address{Department of Mathematics, University of Utah, Salt Lake City, UT}
\email{dwang@math.utah.edu}
\author{Braxton Osting}
\thanks{B. Osting is partially supported by NSF DMS 16-19755 and 17-52202.}
\address{Department of Mathematics, University of Utah, Salt Lake City, UT}
\email{osting@math.utah.edu}
\author{Xiao-Ping Wang}
\address{Department of Mathematics, Hong Kong University of Science and Technology, Hong Kong}
\email{mawang@ust.hk}
\thanks{X.-P. Wang was supported in part by the Hong Kong Research Grants Council (GRF grants 16302715, 16324416, 16303318, and NSFC-RGC joint research grant N-HKUST620/15)}

\subjclass[2010]{35Q35, % PDEs in connection with fluid mechanics
41A60, % Asymptotic approximations, asymptotic expansions (steepest descent, etc.) 
35K93, % 	Quasilinear parabolic equations with mean curvature operator
} 

\keywords{Allen--Cahn equation, asymptotic expansion, free interface dynamics, orthogonal matrix group}

\date{\today}

\begin{abstract} 
We consider the initial value problem for the generalized Allen-Cahn equation,
\[\partial_t \Phi = \Delta \Phi - \varepsilon^{-2} \Phi (\Phi^t \Phi - I), \qquad x \in \Omega, \ t \geq 0, \]
where $\Phi$ is an $n\times n$ real matrix-valued field,  $\Omega$ is a two-dimensional square with periodic boundary conditions, and $\varepsilon > 0$.  This equation is the gradient flow for the energy, 
$E(\Phi) := \int  \frac{1}{2} \|\nabla \Phi \|^2_F + \frac{1}{4 \varepsilon^2}  \| \Phi^t \Phi - I \|^2_F$,  
where $\| \cdot \|_F$ denotes the Frobenius norm. The primary contribution of this paper is to use asymptotic methods to describe the solution of this initial value problem. 

If the initial condition has single-signed determinant, at each point of the domain, at a fast $O(\varepsilon^{-2} t)$ time scale, the solution evolves towards the closest orthogonal matrix. Then, at the $O(t)$ time scale, the solution evolves according to the $O_n$ diffusion equation. Stationary solutions to the  $O_n$ diffusion equation are analyzed for $n=2$. 

If the initial condition has regions where the determinant is positive and  negative, a free interface develops. Away from the interface, in each region, the matrix-valued field behaves as in the single-signed determinant case. 
At the $O(t)$ time scale, the interface evolves in the normal direction by curvature. 
At a slow $O(\varepsilon t)$ time scale, the interface is driven by curvature and the surface diffusion of the matrix-valued field. For $n=2$, the interface is driven by curvature and the jump in the squared tangental derivative of the phase across the interface. In particular, we emphasize that the interface when $n\geq 2$ is driven by surface diffusion, while for $n=1$, the original Allen--Cahn equation, the interface is only driven by mean curvature. 

A variety of numerical experiments are performed to verify, support, and illustrate our analytical results. 
\end{abstract}

\maketitle

\section{introduction}
We consider the initial value problem for the generalized Allen--Cahn equation, 
\begin{equation} \label{e:orig}
\begin{cases}
&\partial_t A = \Delta A - \varepsilon^{-2}  A(A^tA - I), \qquad  x \in \Omega, \ t> 0 \\
& A(t=0,x) = A_0(x), 
\end{cases}
\end{equation}
where $A(t,x)\in M(n)$ is a real matrix-valued field and $\varepsilon >0$ is a small parameter. 
For simplicity, we take the domain $\Omega$ to be a two-dimensional square, $[-1/2, 1/2]^2$, with periodic boundary conditions. 
It is not difficult to show that \eqref{e:orig} is the gradient flow for the energy, 
\begin{equation} \label{e:energy}
E(A) := \int_\Omega  \frac{1}{2} \|\nabla A \|^2_F + \varepsilon^{-2} W(A),  
\qquad \textrm{where } W(A) := \frac{1}{4}  \| A^t A - I \|^2_F, 
\end{equation} 
and $\| \cdot \|_F$ denotes the Frobenius norm. 
Roughly speaking, for $\varepsilon$ small, the solution to \eqref{e:orig} is smoothed by the first term and the second term keeps the pointwise values of the matrix-valued field near $O_n$,  the $n \times n$ orthogonal matrix group. 
This problem was first introduced in \cite{osting2017} as a model problem for several applications where 
a smooth matrix-valued field arises, including  
crystallography, where the matrix-valued field describes the local crystal orientation and 
inverse problems in image analysis {\it e.g.}, diffusion tensor MRI or fiber tractography, where it is of interest to estimate a matrix- or orientation-valued function \cite{OstingWang2018}. 

\medskip

It is clear that when $n=1$, \eqref{e:orig} reduces to the original Allen--Cahn equation \cite{Cahn_2013}, which  models the behavior of two immiscible fluids. In \eqref{e:orig}, because the reaction rate is large compared to the diffusion rate, the solution at each point $x\in \Omega$ quickly tends to a stable equilibrium state of the reaction process, {\it i.e.}, a minimum of $W(A)$. For the case $n=1$, the local minima of $W(A)$ are $1$ and $-1$. There are two cases. 
\begin{itemize}
\item[(i)] If $A_0(x) >0$ (or $A_0(x) <0$ respectively) for every $x\in \Omega$,  then $A(t,x)$ will tend to $1$ (or $-1$ resp.). In this case, the effect of diffusion is only to slightly change the rate at which the solution approaches $1$ (or $-1$ resp.). 
\item[(ii)] However, if $ \Omega_+$ and $\Omega_-$  are such that $\Omega = \overline{\Omega_+} \cup \overline{\Omega_-}$ and $\Omega_+ \cap \Omega_- = \emptyset$,  with 
\begin{equation*}
\begin{cases}
A_0(x)> 0,  &  x \in \Omega_+ \\
A_0(x)< 0,  &  x \in \Omega_-
\end{cases},
\end{equation*}
then a boundary layer in the solution develops at an interface between the two subdomains. 
Through a boundary layer expansion, one can show that 
$A(t,x) = \pm 1$ for $x$ away from the interface and that the interface evolves in the normal direction by its mean curvature. 
\end{itemize}
We refer  to \cite{Bronsard_1991, Bronsard_1993} and references therein for more details on the $n=1$ case. 

\medskip

For $n\geq 2$, the minimizers of $W(A)$ are elements of $O_n$. Recall that 
$$
O_n = SO_n \cup SO_n^-
$$ 
where 
$SO_n$ denotes the special orthogonal group of the orthogonal matrices with determinant $1$  and 
$SO_n^-$ is the set of orthogonal matrices with determinant $-1$. 
In this paper, we use matched asymptotic expansion methods \cite{weinan2011principles} to show that, as in the $n=1$ case, there are two cases. 
\begin{itemize} 
\item[(i)] If the initial condition $A_0(x)$ has either positive or negative determinant for each $x \in \Omega$, then no interface develops. We show in Section~\ref{sec:part1slow} that the $O(t)$ time dynamics of the leading order solution satisfies the \emph{$O_n$ diffusion equation}, 
$$
\partial_t B(t,x) =  \frac{1}{2} \left( \Delta B(t,x) B^t(t,x) - B(t,x) \Delta B^t(t,x)  \right) B(t,x)  
$$
with initial condition given by
$$
B(0,x) = \Pi_{O_n} A_0(x).
$$
Here, and throughout this paper, we use $\Pi_{O_n} A = \arg \min_{B \in O_n} \| A - B\|_F$ to denote the closest point in the orthogonal matrix group to the matrix $A$. We show in Proposition \ref{p:OnDiffEq}  that the leading order solution remains in $O_n$ pointwise for all time $t>0$. We discuss stationary solutions to the $O_n$ diffusion equation in Section~\ref{s:Harmonic}. 

\item[(ii)] 
In the second case, the initial condition $A_0(x)$ satisfies 
\begin{equation*}
\begin{cases}
\det(A_0(x))> 0,  &  x \in \Omega_+ \\
\det(A_0(x))< 0,  &  x \in \Omega_-
\end{cases},
\end{equation*}
for $\Omega_+$ and $\Omega_-$ satisfying $\Omega = \overline{\Omega_+} \cup \overline{\Omega_-}$ and $\Omega_+ \cap \Omega_- = \emptyset$. 
In this case, when $x$ is away from the interface, the behavior is similar to the first case. 
We derive a motion law for the interface at two time scales. 
At the $O(t)$ time scale, the interface evolves in the normal direction by curvature, as in the $n=1$ case; see Section~\ref{sec:part2fast}. 
At a slow $O(\varepsilon t)$ time scale, we show in Section~\ref{sec:part2slow} that the interface is driven by the surface diffusion of the matrix-valued field and the curvature. 
For $n=2$, in Proposition~\ref{p:SlowMotionLaw}, we show that the surface diffusion term can be written as the 
 jump in the squared tangental derivative of the phase across the interface.

In particular, we emphasize that the interface when $n\geq 2$ is driven by surface diffusion, while for $n=1$, the original Allen--Cahn equation, the interface is only driven by mean curvature. 
\end{itemize}
The results obtained via asymptotics are verified, supported, and illustrated in Section~\ref{sec:num} through a wide variety of  numerical experiments.

\medskip

The model \eqref{e:orig} considered in this paper can be viewed as a special case of the general model studied in \cite{Lin_2012}. In \cite{Lin_2012}, an energy of the form in \eqref{e:energy} is considered for high-dimensional, vector-valued functions and  general assumptions on the potential $W$. General results for the phase transition of stationary solutions between minima of $W$ are derived.  
In the present paper, we consider time dynamics for our specific model. 

\subsection*{Outline} The paper is organized as follows. 
In Section~\ref{sec:part1}, we derive the behavior of the matrix-valued field satisfying \eqref{e:orig} if the initial field, $A_0(x)$, only takes values in $SO_n$ or $SO_n^-$. 
In Section~\ref{sec:part2}, we discuss the case when an interface develops between subdomains where $\det(A(x,t)) > 0$ and $\det(A(x,t)) < 0$.  We develop a boundary layer around the interface and derive the motion of the interface at different time scales. 
Some numerical experiments are performed in Section~\ref{sec:num}. 
We conclude with a discussion in Section~\ref{sec:con}.

\section{Evolution of an initial matrix-valued field with single-signed determinant} \label{sec:part1}
In this section, we discuss the case where the initial matrix-valued field, $A_0(x)$, is continuous and has positive determinant at each point $x \in \Omega$. The case where $A_0(x)$ has negative determinant everywhere is analogous. 
In this case, there is no interface appearing in the dynamics of the system. We consider the asymptotic expansion 
$$
A = \bar A_0 + \varepsilon \bar A_1 + \varepsilon^2 \bar A_2 + o(\varepsilon^2) 
$$
and the initial condition $A(t,x)|_{t=0}  = A_0(x)$, which we assume to appear at the $O(1)$ scale.
Then, we expand the nonlinear term on the right hand side of \eqref{e:orig},
\begin{align}\label{eq:ansatz1}
\bar A(\bar A^t \bar A -I) & =  \bar A_0 (\bar A_0^t \bar A_0-I) + \varepsilon( \bar A_0 \bar A_0^t \bar A_1 + \bar A_0\bar A_1^t \bar A_0+ \bar A_1 \bar A_0^t \bar A_0 -  \bar A_1) \nonumber \\
& + \varepsilon^2 (\bar A_0 \bar A_0^t \bar A_2 + \bar A_0 \bar A_2^t \bar A_0 + \bar A_2 \bar A_0^t \bar A_0 +\bar A_0 \bar A_1^t \bar A_1+\bar A_1 \bar A_0^t \bar A_1+\bar A_1 \bar A_1^t \bar A_0 -\bar A_2) + o(\varepsilon^2).
\end{align}

We take two time scales: $t$ and a fast time scale $\tau_1 := \varepsilon^{-2} t$ and write 
$$
\bar A_i = \bar A_i(x,t,\tau_1), \qquad i =1,2.
$$
We have $\partial_t = \varepsilon^{-2}  \partial_{\tau_1}$ and
\begin{equation} \label{e:TimeExpansion}
\partial_t A = \varepsilon^{-2}  \partial_{\tau_1}\bar A_0+ \varepsilon^{-1}  \partial_{\tau_1}\bar A_1+  \partial_{\tau_1}\bar A_2+ \partial_t \bar A_0 + o(1). 
\end{equation}

We insert our ansatz into \eqref{e:orig} and collect terms at each order in $\varepsilon$.  Using \eqref{eq:ansatz1} and \eqref{e:TimeExpansion}  in \eqref{e:orig} yields
\begin{align} \label{e:ExpandedOrig}
& \varepsilon^{-2}  \partial_{\tau_1}\bar A_0+ \varepsilon^{-1}  \partial_{\tau_1}\bar A_1+ o(\varepsilon^{-1}) \\
\nonumber
& \quad =  \varepsilon^{-2}  \bar A_0 (\bar A_0^t \bar A_0-I)  + \varepsilon^{-1}( \bar A_0 \bar A_0^t \bar A_1 + \bar A_0\bar A_1^t \bar A_0+ \bar A_1 \bar A_0^t \bar A_0 -  \bar A_1) + o(\varepsilon^{-1}).
\end{align}

\subsection{Behavior at the $O(\varepsilon^{-2} t)$ time scale} \label{sec:part1fast}
Collecting $O(\varepsilon^{-2})$ terms in \eqref{e:ExpandedOrig} yields
\begin{equation}\label{eq:barA1}
 \partial_{\tau_1}\bar A_0 = \bar A_0 (\bar A_0^t \bar A_0-I).
 \end{equation}
 
For $n =1$, it is well known that the solution of \eqref{eq:barA1} approaches $1$ if the initial value is positive and approaches $-1$ if the initial value is negative as $\tau_1 \rightarrow \infty$.

For $n \geq 2$, at each point $x\in \Omega$, as $\tau_1 \rightarrow \infty$, the solution of \eqref{eq:barA1} approaches a matrix in $SO_n$ if the initial matrix has a positive determinant and approaches a matrix in $SO_n^-$ if the initial matrix has a negative determinant, as shown in the following Lemma.
\begin{lem} \label{l:DynSys}
For the dynamic system 
\begin{align*} 
& \partial_t B = B (B^t B-I) \\
& B(t=0) = B_0
\end{align*}
for a non-singular initial $n \times n$ matrix $B_0$ ($n \geq 2$), as $t \rightarrow \infty$, the solution $B(t)$ approaches the nearest orthogonal matrix to $B_0$, written $\Pi_{O_n} B_0$. 
\end{lem}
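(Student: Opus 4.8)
The plan is to diagonalize the flow by exploiting the invariance of the singular frame. Write the singular value decomposition of the data as $B_0 = U_0 \Sigma_0 V_0^t$ with $U_0, V_0 \in O_n$ and $\Sigma_0 = \mathrm{diag}(\sigma_1(0),\dots,\sigma_n(0))$, where every $\sigma_i(0) > 0$ since $B_0$ is non-singular; recall that the nearest orthogonal matrix is the polar factor, $\Pi_{O_n} B_0 = U_0 V_0^t$. First I would show that the Gram matrix $G := B^t B$ obeys a closed symmetric equation. Differentiating and using $\dot B^t = (B^t B - I) B^t$ (valid because $B^t B - I$ is symmetric) gives $\dot G = (G-I)G + G(G-I) = 2G(G-I)$. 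Since the right-hand side is a polynomial in $G$ it commutes with $G$, so $G(t)$ and $G(0) = V_0 \Sigma_0^2 V_0^t$ are simultaneously diagonalizable and the eigenframe $V_0$ is invariant in time; the identical computation for $B B^t$ shows $U_0$ is invariant. Hence the ansatz $B(t) = U_0 \Sigma(t) V_0^t$ is consistent with the flow, and the matrix system collapses to the decoupled scalar ODEs $\dot\sigma_i = \sigma_i(\sigma_i^2 - 1)$.

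The remainder of the argument is then one-dimensional. The claimed limit $B(t) \to U_0 V_0^t = \Pi_{O_n} B_0$ is exactly equivalent to $\sigma_i(t) \to 1$ for every $i$, so the whole Lemma reduces to analyzing the scalar equation $\dot\sigma = \sigma(\sigma^2 - 1)$ on $\sigma > 0$ and showing that each trajectory is drawn to the unit fixed point. The positive equilibria are $\sigma = 0$ and $\sigma = 1$, and the desired conclusion is that $\sigma = 1$ is globally attracting on $(0,\infty)$. Establishing this attraction is the decisive step: once it is in hand, the SVD reduction above converts it directly into convergence of $B(t)$ to $\Pi_{O_n} B_0$, with no further structural work needed, and repeated singular values cause no difficulty because the scalar dynamics are identical on each.

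This stability computation at $\sigma = 1$ is where the main obstacle lies, and it is a genuine one tied to the sign of the reaction term. Linearizing gives $\frac{d}{d\sigma}\big[\sigma(\sigma^2 - 1)\big]\big|_{\sigma = 1} = 3\sigma^2 - 1\big|_{\sigma=1} = 2 > 0$, so for the vector field exactly as written the unit fixed point is \emph{repelling}: trajectories with $\sigma_i(0) \in (0,1)$ are instead pulled toward the singular limit $\sigma = 0$, while those with $\sigma_i(0) > 1$ escape to $+\infty$. Thus the step the proof hinges on -- attraction to $\sigma = 1$ -- is precisely the one that the scalar analysis does not supply at this sign; the crux of the entire statement is therefore the one-dimensional stability of the unit singular value, and the conclusion $\sigma_i \to 1$ requires the reaction vector field to be decreasing through $\sigma = 1$ (as for $-\sigma(\sigma^2 - 1)$, where $\frac{d}{d\sigma}\big[-\sigma(\sigma^2-1)\big]\big|_{\sigma=1} = -2 < 0$ makes $\sigma = 1$ globally attracting on $\sigma > 0$). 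In short, the SVD diagonalization reduces the Lemma cleanly to a single scalar stability question at $\sigma = 1$, and settling that question so that the unit singular value attracts is the one point that must be resolved to obtain the stated convergence to $\Pi_{O_n} B_0$.
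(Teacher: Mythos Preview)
Your SVD diagonalization is exactly the paper's approach: write $B_0 = U\Sigma_0 V^t$, observe that the flow preserves the singular frame so that $B(t) = U\Sigma(t)V^t$, reduce to the decoupled scalar ODEs $\dot\sigma_i = \sigma_i^3 - \sigma_i$, and then argue $\sigma_i \to 1$ so that $B(t) \to UV^t = \Pi_{O_n}B_0$. Your Gram-matrix justification for the invariance of the frames $U_0$ and $V_0$ is in fact more careful than the paper's, which simply notes that the right-hand side has the form $U(\Sigma^3-\Sigma)V^t$ and concludes that the same $U,V$ persist.

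Your stability concern at $\sigma=1$ is not a gap in your reasoning but a genuine error you have detected in the Lemma as stated. The paper's own proof asserts, without further analysis, that ``since $\sigma_i(0)>0$, we have $\sigma_i(t)\to 1$ as $t\to\infty$,'' which is false for $\dot\sigma=\sigma^3-\sigma$ exactly as you compute: $\sigma=1$ is repelling, trajectories in $(0,1)$ collapse to $0$, and trajectories in $(1,\infty)$ blow up in finite time. The mistake traces back to a dropped minus sign when the fast-time reaction equation is extracted from $\partial_t A = \Delta A - \varepsilon^{-2}A(A^tA-I)$: the correct leading-order balance is $\partial_{\tau_1}\bar A_0 = -\bar A_0(\bar A_0^t\bar A_0-I)$, giving $\dot\sigma_i = \sigma_i - \sigma_i^3$, for which $\sigma=1$ is globally attracting on $(0,\infty)$. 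With that sign correction the scalar step goes through, and your argument (and the paper's) is complete.
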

\begin{proof}
Write the singular-value decomposition of $B_0$ as $B_0 = U\Sigma_0 V^t$, where the diagonal values of $\Sigma_0$ are denoted by $\sigma_{0,i}$ ($i \in [n]$).
Since the right hand side of the equation can initially be written as $U(\Sigma_0\Sigma_0^t \Sigma_0 -\Sigma_0)V^t$, the solution $B(t)$ also admits a singular-value decomposition with the same $U$ and $V$. 
Then, we can write the dynamic system as 
\[U (\partial_t \Sigma) V^t  = U(\Sigma\Sigma^t \Sigma -\Sigma)V^t .\]
For each diagonal element $\sigma_i$ $(i \in [n])$ in $\Sigma$, we have $\partial_t \sigma_i = \sigma_i^3 -\sigma_i$. 
Since the $\sigma_{i}(0) > 0$ for $i \in [n]$, we have $\sigma_i(t) \rightarrow 1$ as $t \rightarrow \infty$. That implies, as $t \rightarrow \infty$, $B(t)$ approaches $UV^t$, which is the closest orthogonal matrix to $B_0$; see, {\it e.g.}, \cite[Lemma 1.1]{osting2017}
\end{proof}
 
Collecting $O(\varepsilon^{-1})$ terms in \eqref{e:ExpandedOrig} yields
\begin{equation}\label{eq:barA2}
 \partial_{\tau_1}\bar A_1 = \bar A_0 \bar A_0^t \bar A_1 + \bar A_0\bar A_1^t \bar A_0+ \bar A_1 \bar A_0^t \bar A_0 -  \bar A_1.
 \end{equation}
 Since $\bar A_1(0,x) = 0$, we have 
$$
\bar A_1(\tau_1,x) = 0
$$ 
is the solution to \eqref{eq:barA2}.

\subsubsection{Summary of the behavior at the $O(\varepsilon^{-2} t)$ time scale} \label{sec:sumpart1fast}
\begin{enumerate}
\item If the determinant of the initial matrix-valued field is positive for all $x \in \Omega$, the leading order matrix $\bar A_0(\tau_1,x)$ at each point approaches the closest orthogonal matrix in $SO_n$. If the determinant of the initial matrix-valued field is negative for all $x \in \Omega$, the leading order matrix $\bar A_0(\tau_1,x)$ at each point will approach the closest orthogonal matrix in $SO_n^-$. 
\item The second order matrix $\bar A_1(\tau_1,x)$ is $0$ for any $\tau_1 \geq 0$ and $x \in \Omega$.
\end{enumerate}

\subsection{Behavior at the $O(t)$ time scale} \label{sec:part1slow}
Using \eqref{eq:ansatz1} and \eqref{e:TimeExpansion} in \eqref{e:orig},
 we have at the time scale $O(t)$, 
\begin{align}
\nonumber
\partial_t \bar A_0 + o(1) & =  \varepsilon^{-2}  \bar A_0 (\bar A_0^t \bar A_0-I)  + \varepsilon^{-1}( \bar A_0 \bar A_0^t \bar A_1 + \bar A_0\bar A_1^t \bar A_0+ \bar A_1 \bar A_0^t \bar A_0 -  \bar A_1) \\
\label{e:ExpandedOrig2}
& \  + (\bar A_0 \bar A_0^t \bar A_2 + \bar A_0 \bar A_2^t \bar A_0 + \bar A_2 \bar A_0^t \bar A_0 +\bar A_0 \bar A_1^t \bar A_1+\bar A_1 \bar A_0^t \bar A_1+\bar A_1 \bar A_1^t \bar A_0 - \bar A_2 +\Delta \bar A_0) +o(1).
\end{align}

Collecting $O(\varepsilon^{-2})$ terms in \eqref{e:ExpandedOrig2} yields
$$
\bar A_0(\bar A_0^t \bar A_0 - I) = 0. 
$$
Since $\bar A_0$ is non-singular, this implies that 
\begin{equation} \label{e:A0A0}
\bar A_0^t \bar A_0 = I = \bar A_0 \bar A_0^t 
\end{equation}
This means that at each point $x\in \Omega$, for any non-singular initial matrix field, the leading order $\bar A_0$ immediately approaches a matrix field with values in $O_n$. This can be interpreted as the long time behavior of the dynamics at the time scale $O(\varepsilon^{-2} t)$.

\medskip

Collecting $O(\varepsilon^{-1})$ terms in \eqref{e:ExpandedOrig2}, we obtain 
$$\bar A_0 \bar A_0^t \bar A_1 + \bar A_0\bar A_1^t \bar A_0+ \bar A_1 \bar A_0^t \bar A_0 -  \bar A_1 = 0. $$
This is also consistent with the solution $\bar A_1(t,x) = 0$ at the $O(\varepsilon^{-2} t)$ time scale.

%Using \eqref{e:A0A0}, this simplifies to 
%\begin{equation} \label{e:t1}
%\bar A_0  \bar A_1^t \bar A_0 = -  \bar A_1. 
%\end{equation}
%Multiplying \eqref{e:t1} on the left and the right by $A_0^t$,  we obtain 
%\begin{subequations} \label{e:A0A1}
%\begin{align}
%&  \bar A_1^t \bar A_0 = - \bar A_0^t \bar A_1 \\
%& \bar A_0 \bar A_1^t  = - \bar A_1 \bar A_0^t. 
%\end{align}
%\end{subequations}
%Using  \eqref{e:t1} twice, we compute 
%\begin{equation} \label{e:t2}
%\bar  A_1 \bar A_1^t = \bar A_0 \bar A_1^t\bar  A_1 \bar A_0^t. 
%\end{equation}
%
%On the other hand, we similarly compute 
%$$
% \bar A_1^t \bar A_1 = \bar A_0^t \bar A_1 \bar A_1^t \bar A_0. 
%$$
%Multiplying on the left by $\bar A_0$ and on the right by $\bar A_0^t$, we have 
%\begin{equation} \label{e:t3}
%\bar A_0 \bar A_1^t \bar A_1 \bar A_0^t = \bar A_1 \bar A_1^t. 
%\end{equation}
%Together, \eqref{e:t2} and \eqref{e:t3} impliy that
%\begin{equation} \label{e:A1A1}
%\bar A_1^t  \bar A_1 = 0  = \bar A_1 \bar A_1^t. 
%\end{equation}
%
%\subsection*{Collecting $O(\varepsilon^{0})$ terms}

\medskip

At $O(1)$ in \eqref{e:ExpandedOrig2}, we obtain
\begin{subequations} \label{e:O1}
\begin{align}
\partial_t \bar A_0 &= \Delta \bar A_0 - \bar A_2 (\bar A_0^t \bar A_0 - I) - \bar A_1 (\bar A_0^t \bar A_1 + \bar A_1^t \bar A_0) - \bar A_0 ( \bar A_0^t \bar A_2 + \bar A_2^t \bar A_0 + \bar A_1^t \bar A_1) \\
\label{e:O1b}
&= \Delta \bar A_0 - (\bar A_2 + \bar A_0 \bar A_2^t \bar A_0), 
\end{align}
\end{subequations}
where we have used the fact $\bar A_1 = 0$. 
We now take the derivative of \eqref{e:A0A0} and use \eqref{e:O1b} to obtain
\begin{align*}
0 &= \bar A_0^t (\partial_t \bar A_0 ) + (\partial_t \bar A_0)^t \bar A_0 \\
&= \bar A_0^t (\Delta \bar A_0 )  -  \bar A_0^t (\bar A_2 + \bar A_0 \bar A_2^t \bar A_0 )  + (\Delta \bar A_0 )^t\bar  A_0 - (\bar A_2^t + \bar A_0^t \bar A_2 \bar A_0^t ) \bar A_0. 
\end{align*}
Using \eqref{e:A0A0} and rearranging, we obtain 
$$
\bar A_0^t \bar A_2 + \bar A_2^t \bar A_0  = \frac{1}{2} \left(\bar  A_0^t (\Delta \bar A_0 )  + (\Delta \bar A_0 )^t \bar A_0  \right). 
$$
Multiplying on the left by $\bar A_0$ and using \eqref{e:A0A0}, we insert this back into \eqref{e:O1b} to obtain
\begin{subequations}  \label{e:dtA0}
\begin{align} \label{e:dtA0a}
\partial_t \bar A_0 &= \frac{1}{2} \Delta \bar A_0 - \frac{1}{2}\bar  A_0 ( \Delta \bar A_0 )^t \bar A_0 \\ 
\label{e:dtA0b}
&= \frac{1}{2} \left(  \Delta \bar A_0 \bar A_0^t - \bar  A_0 ( \Delta \bar A_0 )^t \right) \bar A_0
\end{align}
\end{subequations}
The following proposition shows that if initially $\partial_t \bar A_0$ is in $SO_n$ pointwise, then it will remain there for all time $t>0$. 
\begin{prop} \label{p:OnDiffEq}
We consider the initial value problem  for the $O_n$ diffusion equation,
\begin{subequations} \label{e:OnDiffEq}
\begin{align} 
\label{e:OnDiffEqa}
\partial_t B(t,x) &=  \frac{1}{2} \left( \Delta B(t,x) B^t(t,x) - B(t,x)   \Delta B^t (t,x) \right) B(t,x)   \\
B(0,x) &= B_0(x),
\end{align}
\end{subequations}
where $B_0 \colon \Omega \to O_n$ is given. 
Then $B(t,x) \in O_n$ for all $t\geq 0$. 
\end{prop}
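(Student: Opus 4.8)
**The plan is to show that $Q(t,x) := B^t(t,x) B(t,x)$ satisfies an ODE (in $t$, pointwise in $x$) for which $Q \equiv I$ is the unique solution given the initial condition $Q(0,x) = I$.** Concretely, I would compute $\partial_t Q = (\partial_t B)^t B + B^t (\partial_t B)$ using the right-hand side of \eqref{e:OnDiffEqa}, and substitute to express everything in terms of $Q$, $B$, $\Delta B$, and $\Delta B^t$. Writing $M := \tfrac12(\Delta B\, B^t - B\,(\Delta B)^t)$, note $M$ is antisymmetric by construction, and the equation reads $\partial_t B = M B$. Hence $\partial_t Q = B^t M^t B + B^t M B = B^t(M^t + M)B = 0$, since $M^t = -M$. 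Therefore $Q(t,x)$ is constant in $t$, so $Q(t,x) = Q(0,x) = B_0^t(x) B_0(x) = I$ because $B_0(x) \in O_n$.

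**The remaining point is to conclude $B(t,x) \in O_n$, i.e.\ that $B$ has determinant $\pm 1$, not merely $B^t B = I$** — but $B^t B = I$ already \emph{is} the definition of orthogonality, so this is immediate; the determinant is automatically $\pm 1$. One should note that this argument is purely algebraic and pointwise in $x$: the Laplacian $\Delta B$ enters only as an (a priori known, given a solution $B$) inhomogeneous matrix field, and the antisymmetry of $M$ — which is what makes the computation work — holds regardless of what $\Delta B$ is. So the structure of the proof is: (1) observe $\partial_t B = MB$ with $M = M(t,x)$ antisymmetric; (2) differentiate $B^t B$; (3) use antisymmetry to get $\partial_t(B^t B) = 0$; (4) invoke the initial condition.

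**I do not expect a serious obstacle here; the only thing requiring mild care is the implicit assumption that a (sufficiently smooth) solution $B$ to \eqref{e:OnDiffEq} exists** so that $\Delta B$ and $\partial_t B$ make sense and the manipulations are justified — the proposition is a statement about such a solution, so I would simply assume enough regularity (e.g.\ $B \in C^{2,1}$ on $\Omega \times [0,T)$) and not belabor well-posedness, which is outside the scope of this asymptotic analysis. If one wants to be slightly more careful one could instead argue via Grönwall: letting $E(t,x) := Q(t,x) - I$, the computation shows $\partial_t E = 0$ exactly (no error term), so $E \equiv 0$; there is not even a need for Grönwall. The whole proof is three or four lines once the antisymmetry of $M$ is pointed out.
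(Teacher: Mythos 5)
Your proof is correct and follows essentially the same route as the paper: differentiate $Q = B^tB$ in time, show $\partial_t Q = 0$, and invoke the initial condition $Q(0,x)=I$. Your packaging via the antisymmetry of $M = \tfrac12(\Delta B\,B^t - B\,(\Delta B)^t)$ makes the cancellation in $\partial_t Q = B^t(M^t+M)B$ transparent without any appeal to $B^tB=I$ along the way, which is a slightly cleaner organization of the same computation.
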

\begin{proof}
We compute 
\begin{align*}
\partial_t (B^tB) &= (\partial_t B)^t B + B^t (\partial_t B) \\ 
&= \frac{1}{2}\left(\Delta B(t,x) \right)^t B(t,x)  -  \frac{1}{2} B^t(t,x)  \left( \Delta B(t,x) \right) B^t(t,x)  B(t,x) \\
 & \quad +  \frac{1}{2} B^t(t,x) \left( \Delta B(t,x) \right) -  \frac{1}{2} B^t(t,x)B(t,x)  \left( \Delta B(t,x) \right)^t B(t,x)   \\ 
& = 0. 
\end{align*}
\end{proof}

\begin{rem}
We refer to \eqref{e:dtA0} and \eqref{e:OnDiffEq} as the \emph{$O_n$ diffusion equation} because it can be obtained from the diffusion equation for a matrix-valued field when the matrix is constrained to be $O_n$-valued. That is, introducing a Lagrange multiplier for the constraint and then solving for it yields precisely this equation. 
\end{rem}
\begin{rem}
For $n=2$ and the ansatz, 
$$
B(x,t) = \begin{pmatrix} \cos \eta(x,t) & - \sin  \eta (x,t) \\ \sin \eta(x,t) & \cos \eta(x,t) \end{pmatrix},  
$$
we compute 
$$
\partial_t B(x,t) = - \begin{pmatrix} \sin \eta(x) &  \cos  \eta (x) \\ -\cos \eta(x) & \sin \eta(x) \end{pmatrix} \partial_t \eta
\quad \textrm{and} \quad 
\Delta B - B (\Delta B)^t B = - 2 \begin{pmatrix} \sin \eta(x) &  \cos  \eta (x) \\ -\cos \eta(x) & \sin \eta(x) \end{pmatrix} \Delta \eta. 
$$
We conclude that $B(x,t)$ satisfies the orthogonal diffusion equation \eqref{e:OnDiffEq} if $\eta_t = \Delta \eta$. 
The \emph{spherical diffusion equation} is given by 
$$
\phi_t = \Delta \phi + |\nabla \phi |^2 \phi, 
$$ 
see, {\it e.g.}, \cite{E_2000}. Making the ansatz $\phi(x,t) = e^{i \eta(t,x)}$, we find that $\eta_t = \Delta \eta$.  
Thus, we conclude that the $n=2$ orthogonal diffusion equation \eqref{e:OnDiffEq} with initial condition taking values in $SO_n$ is equivalent to the  spherical diffusion equation. Due to this connection, we refer to $\eta = \eta(x,t)$ as the \emph{phase} of the matrix-valued field, $B(x,t)$.  
\end{rem}

\subsubsection{Summary of the behavior at the $O(t)$ time scale:} \label{sec:sumpart1slow}
\begin{enumerate}
\item The leading order solution $\bar A_0$  take values in the orthogonal matrix group for all time.
\item The second order solution is $\bar A_1 = 0$.
\item The time dynamics of $\bar A_0$ is governed by the $O_n$ diffusion equation, 
\begin{align*} 
\partial_t \bar A_0(t,x) &=  \frac{1}{2} \left( \Delta \bar A_0(t,x) \bar A_0^t(t,x) - \bar A_0(t,x) \Delta \bar A_0^t(t,x)  \right) \bar A_0(t,x),   \\
\bar A_0(0,x) &= \Pi_{O_n} A_0(x).
\end{align*}
Here the initial condition is pointwise the closest point to $A_0(x)$ in $O_n$. 
\end{enumerate}

\subsection{Harmonic orthogonal matrix-valued fields} \label{s:Harmonic}
In this section, we consider stationary solutions of the $O_n$ diffusion equation \eqref{e:OnDiffEq}, which we refer to as \emph{harmonic orthogonal matrix-valued fields}, satisfying 
\begin{subequations} \label{e:Harmonic}
\begin{align}  \label{e:Harmonica}
& (\Delta B)^t B - B^t (\Delta B)  = 0 \\ 
& B^tB = I
\end{align}
\end{subequations}
Note that \eqref{e:Harmonica} just states that $ (\Delta B)^t B$ is a symmetric matrix.

For $n=1$, the only solutions are $B=\pm 1$. 

For $n=2$, for unknown phase $\eta\colon \Omega \to \mathbb R$, we consider the $SO_n$ ansatz, 
\begin{equation}\label{eq:matrixb}
B(x) = \begin{pmatrix} \cos \eta(x) & - \sin  \eta (x) \\ \sin \eta(x) & \cos \eta(x) \end{pmatrix}. 
\end{equation}
We compute 
$$
\Delta B  = \begin{pmatrix} 
- \sin \eta \Delta \eta - \cos \eta |\nabla \eta|^2 & - \cos \eta \Delta \eta + \sin \eta |\nabla \eta|^2 \\ 
\cos \eta \Delta \eta - \sin \eta |\nabla \eta |^2 & - \sin \eta \Delta \eta - \cos \eta |\nabla \eta |^2 
\end{pmatrix}
\quad \textrm{and} \quad 
(\Delta B)^t B  = \begin{pmatrix} - | \nabla \eta |^2 & \Delta \eta \\  - \Delta \eta & - |\nabla \eta|^2 \end{pmatrix}. 
$$
We observe that $(\Delta B)^t B$ is symmetric if and only if  $\Delta \eta = 0$. We conclude that there exists a family of harmonic $SO_n$-valued fields on the torus of the form \eqref{eq:matrixb} where the phase is given by
\begin{equation} \label{e:HarmSol}
\eta(x_1,x_2) = 2\pi(n_1 x_1+n_2 x_2), \qquad \qquad n_1,n_2 \in \mathbb Z.
\end{equation}
Several numerical experiments are performed in Section~\ref{sec:single} to show that such fields are stationary for \eqref{e:orig} and to investigate what happens if perturbations of such fields are taken as initial conditions.

\section{Evolution of an $O_n$-valued initial field}\label{sec:part2}
We consider an initial condition $A_0(x)$ that satisfies 
\begin{equation}
\begin{cases}
\det(A_0(x))> 0,  &  x \in \Omega_+ \\
\det(A_0(x))< 0,  &  x \in \Omega_-
\end{cases}
\end{equation}
for $\Omega_+$ and $\Omega_-$ satisfying $\Omega = \overline{\Omega_+} \cup \overline{\Omega_-}$ and $\Omega_+ \cap \Omega_- = \emptyset$. 

Denote $\Gamma = \bar \Omega_+ \cap \bar\Omega_-$ and assume $\Gamma(t)$ is a finite collection of simple, closed, smooth curves in $\mathbb{R}^2$ so that we can find a parametric representation, at least locally, of the form
\[\Gamma(t) = \{\vec \varphi(s,t)\colon s\in \mathbb{R}^1\}, \quad \vec \varphi(s,t) = (\varphi_1(s,t),\varphi_2(s,t) ).\]
We assume $s$ to be the arc-length parameter so that we have 
\[\vec T = \frac{\partial \vec\varphi}{\partial s}, \quad \frac{\partial \vec T}{\partial s} = - \kappa \vec n, \quad \frac{\partial \vec n}{\partial s} = \kappa \vec T\]
where $\vec T$ denotes the unit tangent vector, $\vec n$ denotes the unit outer normal vector, and $\kappa$ denotes the curvature (see Figure~\ref{fig:diagram1} for a diagram on the unit outer normal vector).
\begin{figure}[ht]
\includegraphics[width = 0.4 \textwidth]{./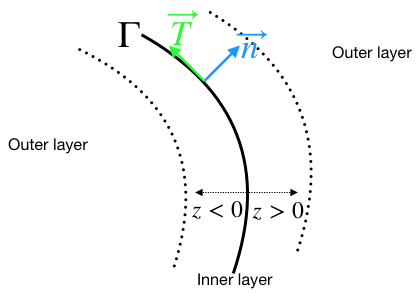}
\caption{Diagram for the unit normal vector and unit tangent vector for $\Gamma$. See Section~\ref{sec:part2}.}\label{fig:diagram1}
\end{figure}

We introduce local coordinates near $\Gamma$ as follows. 
We assume that for every point $x$ in a neighborhood of $\Gamma$, there is a unique point $\vec\varphi (s,t)$ which is the orthogonal projection of $x$ onto $\Gamma(t)$. We then define a unique normal signed distance $\rho(x,t)$ from $x$ to $\Gamma(t)$, $\rho(x,t) = (x - \vec \varphi) \cdot \vec n$.
We have a transformation from $(x,t)$ to $(s,r,t)$ defined by 
\begin{equation}\label{eq:stox}
x = \vec \varphi(s,t) + r \vec n(s,t)
\end{equation}
where $r = \rho(x,t)$.
We summarize several identities for the transformation from $(x,t)$ to $(s,r,t)$ in the following Lemma. A proof of this Lemma can be found  in \cite{Dai_2012} . 
\begin{lem}
For the transformation rule defined in \eqref{eq:stox}, we have the following equalities:\\
1.The normal velocity of $\Gamma$ at $\vec \varphi(s,t)$ is given by  $\vec{v} \cdot \vec n = -\frac{\partial \rho}{\partial t}$. \\
2. $\nabla_x s = \frac{1}{1+r\kappa} \vec T$, $\Delta_x s = -r \frac{\partial \kappa}{\partial s} \frac{1}{(1+r\kappa)^3}$.\\
3. $\nabla_x r = \vec n$, $\Delta_x r = \frac{\kappa}{1+r \kappa}$.\\
4. For any function $u(x,t) = \tilde u(s,r,t)$, 
\[\Delta_x u = \frac{1}{(1+r\kappa)^2} \frac{\partial^2 \tilde u}{\partial s^2} + \frac{\partial^2 \tilde u}{\partial r^2} +\frac{\kappa}{1+r \kappa} \frac{\partial \tilde u}{\partial r} -\left(\frac{r}{(1+r\kappa)^3} \frac{\partial \kappa}{\partial s} \right)\frac{\partial \tilde u}{\partial s} . \]
\end{lem}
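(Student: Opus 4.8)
The plan is to derive everything by a direct computation in the adapted coordinates $(s,r,t)$ defined by the change of variables \eqref{eq:stox}, using only the Frenet relations $\partial_s\vec\varphi=\vec T$, $\partial_s\vec T=-\kappa\vec n$, $\partial_s\vec n=\kappa\vec T$ and the orthonormality of $\{\vec T,\vec n\}$. First I would differentiate $x=\vec\varphi(s,t)+r\,\vec n(s,t)$ with respect to $s$ and $r$ at fixed $t$, obtaining $\partial_s x=(1+r\kappa)\vec T$ and $\partial_r x=\vec n$; this shows that the spatial Jacobian has determinant $1+r\kappa$ (nonzero on the tubular neighborhood where the coordinates are assumed to be well defined) and that the induced metric is diagonal with coefficients $(1+r\kappa)^2$ and $1$ in $s$ and $r$ respectively.

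From here, parts 2 and 3 each split into a gradient statement and a Laplacian statement. For the gradients, $\nabla_x s$ and $\nabla_x r$ are the rows of the inverse Jacobian, characterized by $\nabla_x s\cdot\partial_s x=1$, $\nabla_x s\cdot\partial_r x=0$ and $\nabla_x r\cdot\partial_s x=0$, $\nabla_x r\cdot\partial_r x=1$; solving these against the orthonormal frame gives $\nabla_x s=(1+r\kappa)^{-1}\vec T$ and $\nabla_x r=\vec n$. For the Laplacians I would use the Laplace--Beltrami formula in the orthogonal coordinates $(s,r)$, namely
\[
\Delta_x u=\frac{1}{1+r\kappa}\left[\partial_s\!\left(\frac{1}{1+r\kappa}\,\partial_s\tilde u\right)+\partial_r\!\left((1+r\kappa)\,\partial_r\tilde u\right)\right],
\]
and expand using $\partial_s(1+r\kappa)=r\,\partial_s\kappa$ and $\partial_r(1+r\kappa)=\kappa$; this immediately yields the formula for $\Delta_x u$ in part 4, and then $\Delta_x s$ and $\Delta_x r$ follow by specializing $\tilde u=s$ (so $\partial_s\tilde u=1$ and all other derivatives vanish) and $\tilde u=r$ (so $\partial_r\tilde u=1$ and all other derivatives vanish). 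If one prefers not to invoke Laplace--Beltrami, the same expressions come out of computing $\nabla_x\cdot\nabla_x u$ directly from the gradient formulas by a two-step chain rule, at the cost of somewhat more bookkeeping.

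For part 1 I would write $\rho(x,t)=\big(x-\vec\varphi(s(x,t),t)\big)\cdot\vec n(s(x,t),t)$, use the defining orthogonality of the projection in the form $x-\vec\varphi=r\,\vec n$, and differentiate in $t$ at fixed $x$. The terms carrying $\partial_t s$ drop because $\vec T\cdot\vec n=0$ and $\vec n\cdot\partial_s\vec n=0$; the term $(x-\vec\varphi)\cdot\partial_t\vec n=r\,\vec n\cdot\partial_t\vec n$ vanishes because $|\vec n|\equiv1$; and $\partial_t x=0$. What is left is $\partial_t\rho=-\partial_t\vec\varphi\cdot\vec n=-\vec v\cdot\vec n$, where $\vec v=\partial_t\vec\varphi$ is the velocity of the interface.

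I expect the only real obstacle to be clerical: getting the signs right (particularly in part 1 and in the $\partial_s\kappa$ terms) and being careful about which variables are held fixed in each partial derivative, since $s$ and $r$ are functions of $(x,t)$ while $\vec\varphi,\vec T,\vec n$ are functions of $(s,t)$. The structural fact that makes every cross term collapse is the orthogonality identity $x-\vec\varphi=r\,\vec n$, which holds because $\vec\varphi(s,t)$ is by construction the orthogonal projection of $x$ onto $\Gamma(t)$; a fully detailed version of this argument is in \cite{Dai_2012}.
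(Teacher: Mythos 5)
Your proposal is correct and complete: the computation of $\partial_s x=(1+r\kappa)\vec T$, $\partial_r x=\vec n$, the identification of the gradients via the inverse Jacobian, the Laplace--Beltrami expansion in the orthogonal coordinates $(s,r)$ (which reproduces part 4 exactly and yields parts 2 and 3 by specializing $\tilde u=s$ and $\tilde u=r$), and the differentiation of $\rho=(x-\vec\varphi)\cdot\vec n$ in $t$ at fixed $x$ all check out with the paper's sign conventions $\partial_s\vec T=-\kappa\vec n$, $\partial_s\vec n=\kappa\vec T$. The paper itself gives no proof of this lemma, deferring entirely to \cite{Dai_2012}, so your argument supplies the omitted computation rather than duplicating or diverging from one in the text; it is the standard tubular-neighborhood calculation and nothing in it needs repair.
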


Below, we study the inner layer expansion to study the behavior around the interface at the time scales $O(t)$ and $O(\varepsilon t)$.

\subsection{Behavior at the $O(t)$ time scale.} \label{sec:part2fast}
At the $O(t)$ time scale, when $x$ is away from the interface $\Gamma$, the behavior exactly reduces to the case studied in Section~\ref{sec:part1slow}. That is corresponding to the outer layer expansion for the system \eqref{e:orig}. We don't repeat the calculation
and refer the results to the summary in Section~\ref{sec:sumpart1slow}.

The inner expansion requires rescaling the normal coordinate by $z = \frac{r}{\varepsilon}$. Assume the expansion of $\vec \varphi(s,t)$ and $\kappa$ are
\begin{align}
&\vec \varphi(s,t) = \vec \varphi_0(s,t) +\varepsilon \vec \varphi_1(s,t) + \varepsilon^2 \vec \varphi_2(s,t) + o(\varepsilon^2), \\
&\kappa = \kappa_0 + \varepsilon \kappa_1 +\varepsilon^2 \kappa_2+ o(\varepsilon^2) \label{ex:kappa}.
\end{align}
Writing $\Delta_x$ in terms of $(s,z)$  using \eqref{ex:kappa} yields
\begin{equation} \label{ex:Delta}
\Delta_x = \varepsilon^{-2} \partial_{zz} +\varepsilon^{-1} \kappa_0 \partial_z + (\partial_{ss} - (z\kappa_0^2-\kappa_1) \partial_z)+ o(1).
\end{equation}
Consider the expansion 
\begin{equation}\label{eq:tscale}
A(x,t) = \tilde A(s,z,t) = \tilde A_0(s,z,t)+\varepsilon  \tilde A_1(s,z,t) +  \varepsilon^2 \tilde A_2(s,z,t) + o(\varepsilon^2)\end{equation}
and write
\begin{equation}\label{eq:tderi}
\partial_t A(x,t) = \partial_t   \tilde A(s,z,t)  + \frac{\partial s}{\partial t} \partial_s  \tilde A(s,z,t)  + \varepsilon^{-1} \frac{\partial r}{\partial t} \partial_z  \tilde A(s,z,t) .
\end{equation}
Similar to \eqref{eq:ansatz1}, we have  \begin{align}\label{eq:ansatz2}
\tilde A(\tilde A^t \tilde A -I) & =  \tilde A_0 (\tilde A_0^t \tilde A_0-I) + \varepsilon( \tilde A_0 \tilde A_0^t \tilde A_1 + \tilde A_0\tilde A_1^t \tilde A_0+ \tilde A_1 \tilde A_0^t \tilde A_0 -  \tilde A_1) \nonumber \\
&  \quad + \varepsilon^2 (\tilde A_0 \tilde A_0^t \tilde A_2 + \tilde A_0 \tilde A_2^t \tilde A_0 + \tilde A_2 \tilde A_0^t \tilde A_0 +\tilde A_0 \tilde A_1^t \tilde A_1+\tilde A_1 \tilde A_0^t \tilde A_1+\tilde A_1 \tilde A_1^t \tilde A_0 -\tilde A_2) + o(\varepsilon^2).
\end{align}
Substituting \eqref{ex:Delta}, \eqref{eq:tscale}, \eqref{eq:tderi}, and \eqref{eq:ansatz2} into \eqref{e:orig} yields 
\begin{align}
\label{e:OutExpand}
 &\varepsilon^{-1} \frac{\partial r}{\partial t} \partial_z \tilde A_0 +o(\varepsilon^{-1}) \\
\nonumber
  & =   \varepsilon^{-2} \left(\partial_{zz} \tilde A_0 - \tilde A_0 (\tilde A_0^t \tilde A_0-I) \right) \\
\nonumber
  & \quad + \varepsilon^{-1} \left( \kappa_0 \partial_z \tilde A_0 + \partial_{zz} \tilde A_1 - ( \tilde A_0 \tilde A_0^t \tilde A_1 + \tilde A_0\tilde A_1^t \tilde A_0+ \tilde A_1 \tilde A_0^t \tilde A_0 -  \tilde A_1) \right) + o(\varepsilon^{-1})
\end{align}

\medskip

Collecting the $O(\varepsilon^{-2})$ terms in \eqref{e:OutExpand}, we obtain
\begin{align}\label{eq:part2slowleading}
\partial_{zz} \tilde A_0 - \tilde A_0 (\tilde A_0^t \tilde A_0-I)= 0.
\end{align}
Matching the outer expansion gives the boundary conditions 
\begin{equation} \label{eq:part2slowleadingBC}
\lim_{z \to \pm \infty} \tilde A_0 = \bar A_0,
\end{equation}
where $\bar A_0 \in O_n$. 
Note that \eqref{eq:part2slowleading} is independent of $s$ and thus we solve  \eqref{eq:part2slowleading} for each $s$ independently.  
For $n=2$, the following proposition gives the explicit solution to \eqref{eq:part2slowleading} with the boundary conditions in \eqref{eq:part2slowleadingBC}. 

\begin{prop} \label{p:MatrixTanhProfile}
The solution to the second-order differential equation for the $2\times 2$ matrix field, $B\colon \mathbb R \to M(2)$
\begin{align*}
& \frac{d^2 B}{dz^2} = B (B^tB - I) \\
& \lim_{z\to - \infty} B(z) = \begin{bmatrix} \cos( \eta_-)   & \sin(\eta_-) \\  \sin(\eta_-)   & -\cos(\eta_-)  \end{bmatrix} \in SO_2^- \\
& \lim_{z\to + \infty} B(z) = \begin{bmatrix} \cos( \eta_+)   & -\sin(\eta_+) \\  \sin(\eta_+)   & \cos(\eta_+)  \end{bmatrix}  \in SO_2 
\end{align*}
is given by 
$$
B(z) =  
\begin{bmatrix} \cos(\xi_1)   & -\sin(\xi_1) \\  \sin(\xi_1)   & \cos(\xi_1)  \end{bmatrix} 
\begin{bmatrix} 1 & 0 \\ 0 & \tanh \left( \frac{z}{\sqrt{2}} \right) \end{bmatrix} 
 \begin{bmatrix} \cos(\xi_2)   & -\sin(\xi_2) \\  \sin(\xi_2)   & \cos(\xi_2)    \end{bmatrix}^t,
$$
where $\xi_1 = \frac{\eta_- +\eta_+}{2}$ and $\xi_2 = \frac{\eta_- -\eta_+}{2}$.
\end{prop}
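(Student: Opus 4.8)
The plan is to reduce the matrix ODE to a scalar one by a singular-value-type ansatz, exactly as in the proof of Lemma~\ref{l:DynSys}. Observe that if $B(z) = U\,\Sigma(z)\,V^t$ with $U,V$ \emph{constant} orthogonal matrices and $\Sigma(z)$ diagonal, then $B^tB - I = V(\Sigma^2 - I)V^t$, so $B(B^tB-I) = U\,\Sigma(\Sigma^2-I)\,V^t$, while $B'' = U\,\Sigma''\,V^t$; hence the equation decouples into the scalar equations $\sigma_i'' = \sigma_i^3 - \sigma_i$ for the diagonal entries of $\Sigma$. Since $\det B(z) = \det(U)\det(V)\,\sigma_1(z)\sigma_2(z)$ and the prescribed limits have determinants $-1$ and $+1$, exactly one of the entries must change sign across $z=0$; thus $\Sigma(z)$ is not literally an SVD but a signed diagonal factor, and the only possibility consistent with the limits being orthogonal is $\sigma_1 \equiv 1$ (a constant equilibrium) together with a sign-changing heteroclinic $\sigma_2$.

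First I would solve $\sigma'' = \sigma^3 - \sigma$. It has the first integral $\tfrac12(\sigma')^2 - \tfrac14(\sigma^2-1)^2 = \mathrm{const}$; the orbit connecting the equilibria $\mp 1$ forces $\mathrm{const}=0$, i.e. $\sigma' = \tfrac1{\sqrt2}(1-\sigma^2)$ on $(-1,1)$, which integrates — fixing the translation so that $\sigma(0)=0$ — to $\sigma(z) = \tanh(z/\sqrt2)$, with $\sigma(\pm\infty) = \pm 1$. (A direct check: with $u=\tanh(z/\sqrt2)$ one has $u' = \tfrac1{\sqrt2}(1-u^2)$ and $u'' = u^3-u$.) So the candidate profile is $D(z) := \mathrm{diag}\!\big(1,\ \tanh(z/\sqrt2)\big)$.

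Next I would pin down the constant factors $U,V$ by the boundary data: writing $U = R(\xi_1)$, $V = R(\xi_2)$ for rotation angles $\xi_1,\xi_2$, I need $U D(+\infty) V^t = R(\xi_1)R(\xi_2)^t = R(\xi_1-\xi_2)$ to equal the $SO_2$ limit, hence $\xi_1-\xi_2 = \eta_+$, and $U D(-\infty) V^t = R(\xi_1)\,\mathrm{diag}(1,-1)\,R(\xi_2)^t$ to equal the $SO_2^-$ limit; the latter product, computed with the angle-addition formulas, equals the reflection matrix with entries $\cos(\xi_1+\xi_2), \sin(\xi_1+\xi_2), \sin(\xi_1+\xi_2), -\cos(\xi_1+\xi_2)$, forcing $\xi_1+\xi_2 = \eta_-$. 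Solving the two linear relations gives $\xi_1 = \tfrac{\eta_-+\eta_+}{2}$, $\xi_2 = \tfrac{\eta_-\!-\eta_+}{2}$, which is exactly the claimed formula. With these choices the proposition is confirmed directly: $B^tB - I = R(\xi_2)(D^2-I)R(\xi_2)^t$ gives $B(B^tB-I) = R(\xi_1)\,D(D^2-I)\,R(\xi_2)^t = R(\xi_1)\,D''\,R(\xi_2)^t = B''$ by the scalar identities, and $B(\pm\infty) = R(\xi_1)\,\mathrm{diag}(1,\pm1)\,R(\xi_2)^t$ reduces to the two stated matrices via $\xi_1+\xi_2 = \eta_-$, $\xi_1-\xi_2 = \eta_+$.

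The step I expect to be the main obstacle is \emph{justifying the ansatz} — i.e. showing that every bounded solution of the boundary-value problem has the product form with $z$-independent outer factors, so that the solution is unique. One natural route is to exploit the Hamiltonian structure: the quantity $\tfrac12\|B'\|_F^2 - W(B)$ is conserved along solutions and must vanish on an orbit joining two zeros of $W$, and one argues that such an orbit must trace a minimal-energy path through $O_2$ with the $\tanh$-profile above; alternatively, one decomposes a general solution through a smoothly varying SVD and shows the rotational factors are constant because the off-diagonal components of the equation force their derivatives to vanish. For the purposes of the paper it is enough to exhibit the explicit solution and verify that it satisfies the ODE and both boundary conditions, so I would present the construction above and relegate uniqueness to a remark.
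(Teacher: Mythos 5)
Your proposal is correct and follows essentially the same route as the paper: both verify that the product ansatz $B(z)=U_1\,\mathrm{diag}\bigl(1,\tanh(z/\sqrt{2})\bigr)\,U_2^t$ with constant rotations satisfies the ODE via the scalar identity $\sigma''=\sigma^3-\sigma$, and both pin down $\xi_1,\xi_2$ from the boundary data through the angle-addition formulas. The extra material you supply (deriving $\tanh$ from the first integral, and flagging that uniqueness of the profile is not actually proved) goes beyond the paper, which likewise only exhibits and verifies the solution without addressing uniqueness.
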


\begin{proof} The proposed solution takes the form 
$B(z) =  U_1 D U_2^t$, 
where $U_i$ for $i=1,2$ are matrices in $SO_2$ that are independent of $z$ 
and $D=\begin{bmatrix} 1 & 0 \\ 0 & \tanh \left( \frac{z}{\sqrt{2}} \right) \end{bmatrix} $ is a diagonal matrix. 
Since $\sigma(z) = \tanh\left(\frac{z}{\sqrt{2}} \right)$ satisfies $ \partial_{zz} \sigma  = \sigma^3 -\sigma$, we have that $B(z)$ satisfies the differential equation.  Using the  two trigonometric identities, 
\begin{align*}
\cos \eta_{\pm} &= \cos \xi_1 \cos \xi_2 \mp \sin \xi_1 \sin \xi_2 \\
\sin \eta_{\pm} &= \sin \xi_1 \cos \xi_2 \pm \cos \xi_1 \sin \xi_2, 
\end{align*}
 $B(z)$ satisfies the boundary conditions as $z \to \pm \infty$. 
%
%When $n =2$, assume the boundary conditions are 
%\begin{equation}\label{bdc}
%\tilde A_0|_{z = -\infty} = \begin{bmatrix} \cos(\alpha)   & \sin(\alpha) \\  \sin(\alpha)   & -\cos(\alpha)  \end{bmatrix}, \ \ 
%\tilde A_0|_{z = \infty} = \begin{bmatrix} \cos(\beta)   & -\sin(\beta) \\  \sin(\beta)   & \cos(\beta)  \end{bmatrix} 
%\end{equation}
%where $\alpha$ and $\beta$ are fixed for each given $s$.  Note that $\tilde A_0(-\infty) \in SO_n^-$ and $\tilde A_0(\infty) \in SO_n$.
%
%For such boundary conditions, we choose 
%\begin{equation}\label{UVt}
%U = \begin{bmatrix} \cos(\xi_1)   & -\sin(\xi_1) \\  \sin(\xi_1)   & \cos(\xi_1)  \end{bmatrix}, \ \ 
%V^t = \begin{bmatrix} \cos(\xi_2)   & \sin(\xi_2) \\  -\sin(\xi_2)   & \cos(\xi_2)    \end{bmatrix}, \ \ 
%\Sigma(z) =  \begin{bmatrix}1   & 0 \\  0   & \sigma(z)  \end{bmatrix}
%\end{equation}
%where $\xi_1 = \frac{\alpha+\beta}{2}$ and $\xi_2 = \frac{\alpha-\beta}{2}$.
%
%Then, direct calculations imply that \eqref{eq:part2slowleading} coupled with boundary conditions \eqref{bdc} are reduced to 
%\begin{equation}
%\begin{cases}
%\partial_{zz} \sigma  = \sigma^3 -\sigma, \\
%\sigma|_{z = -\infty} = -1,\\
%\sigma|_{z = \infty} = 1
%\end{cases}
%\end{equation}
%which gives that $\sigma(z) = \tanh(\frac{z}{\sqrt{2}})$. 
%
\end{proof}

Hence, for $n = 2$, by Proposition~\ref{p:MatrixTanhProfile}, we explicitly get the solution for \eqref{eq:part2slowleading} coupled with boundary conditions \eqref{eq:part2slowleadingBC},
\begin{equation} \label{eq:prof}
\tilde A_0(s,z,t) = 
\begin{bmatrix} \cos(\xi_1)   & -\sin(\xi_1) \\  \sin(\xi_1)   & \cos(\xi_1)  \end{bmatrix} 
\begin{bmatrix} 1 & 0 \\ 0 & \tanh(\frac{z}{\sqrt{2}}) \end{bmatrix} 
 \begin{bmatrix} \cos(\xi_2)   & -\sin(\xi_2) \\  \sin(\xi_2)   & \cos(\xi_2)    \end{bmatrix}^t,
 \end{equation}
where 
 $\xi_1 = \xi_1(s,t)$ and $\xi_2 = \xi_2(s,t)$ are determined from the phase of the outer solution, $\bar A_0(x,t)$, for each $s$ and $t$.

\medskip

Collecting the $O(\varepsilon^{-1})$ terms in \eqref{e:OutExpand}  yields
\begin{align}\label{eq:part2slowsecond1}
\frac{\partial r}{\partial t} \partial_z \tilde A_0 = \kappa_0 \partial_z \tilde A_0 + \partial_{zz} \tilde A_1 - ( \tilde A_0 \tilde A_0^t \tilde A_1 + \tilde A_0\tilde A_1^t \tilde A_0+ \tilde A_1 \tilde A_0^t \tilde A_0 -  \tilde A_1)
\end{align}
Taking the Frobenius inner product with $\partial_z \tilde A_0$ on both sides of  \eqref{eq:part2slowsecond1} yields
$$
\left( \frac{\partial r}{\partial t} -\kappa_0 \right) \langle \partial_z \tilde A_0, \partial_z \tilde A_0\rangle_F  = \langle \partial_{zz} \tilde A_1 , \partial_z \tilde A_0\rangle_F- \langle  \tilde A_0 \tilde A_0^t \tilde A_1 + \tilde A_0\tilde A_1^t \tilde A_0+ \tilde A_1 \tilde A_0^t \tilde A_0 -  \tilde A_1, \partial_z \tilde A_0 \rangle_F.
$$
We integrate the above equation with respect to $z$ from $-\infty$ to $\infty$. Integrating by parts, we can rewrite the first term on the right hand side as 
\begin{align*}
\int_{-\infty}^{\infty}\langle \partial_{zz} \tilde A_1 ,\partial_z \tilde A_0 \rangle_F dz = \int_{-\infty}^{\infty} \langle \tilde A_1 ,\partial_{zzz} \tilde A_0 \rangle_F dz
\end{align*}
where the boundary terms vanish because  
$\partial_z \tilde A_0 = 0 $ at $z = \pm \infty$   from the solution to the  leading order expansion and 
 $\bar A_1 = 0$ in the outer layer which corresponds to $\tilde A_1 = 0$ at  $z = \pm \infty$ from the asymptotic matching.
The second term on the right hand side can be rewritten as 
\[\int_{-\infty}^{\infty} \langle  \tilde A_0 \tilde A_0^t \tilde A_1 + \tilde A_0\tilde A_1^t \tilde A_0+ \tilde A_1 \tilde A_0^t \tilde A_0 -  \tilde A_1, \partial_z \tilde A_0 \rangle_F dz = \int_{-\infty}^{\infty} \langle \tilde A_1, \partial_z (\tilde A_0\tilde A_0^t \tilde A_0-\tilde A_0)\rangle_F dz\]
where the boundary terms vanish because of $\bar A_1 = 0$ in the outer layer.
We then obtain 
\begin{equation} \label{eq:part2slowsecond2}
\left(\frac{\partial r}{\partial t} -\kappa_0 \right) \int_{-\infty}^{\infty}\langle  \partial_z \tilde A_0, \partial_z \tilde A_0 \rangle_F dz 
=   
\int_{-\infty}^{\infty} \left\langle \tilde A_1, \partial_{z}\left(\partial_{zz} \tilde A_0- (\tilde A_0\tilde A_0^t \tilde A_0-\tilde A_0)\right) \right\rangle_F dz. 
\end{equation}
Denote \begin{equation} \label{gamma}
\gamma(s) := \int_{-\infty}^{\infty}\|\partial_z \tilde A_0\|_F^2 \ dz.
\end{equation}
We note that when $n = 1$, $\gamma$ is the surface tension on the interface between two different phases \cite{Pego_1989}.
Since we assume $\tilde A_0 \in SO_n$ on  one side of $\Gamma(t)$ and $\tilde A_0 \in SO_n^-$ on the other, we have $\gamma(s) >0 $. 
Using \eqref{eq:part2slowleading}, the right hand side of \eqref{eq:part2slowsecond2} vanishes and we have the normal velocity is given by 
\[ \vec{v} \cdot \vec n  =  -\kappa_0 . \] 
It follows that the interface evolves according to the mean curvature flow along its normal direction at the $O(t)$ time scale.

\subsubsection{Summary of the behavior near the interface at  the $O(t)$ time scale:} \label{sec:sumpart2fast}
\begin{enumerate}
\item The leading order solution, $\tilde A_0$, transitions from a matrix in $SO_n^-$ to a matrix in $SO_n$ in the boundary layer of the interface. 
It satisfies \eqref{eq:part2slowleading} with the boundary conditions in \eqref{eq:part2slowleadingBC}. 
For $n=2$, the leading order solution is explicitly given by \eqref{eq:prof}. 

\item The interface moves in the normal direction by the leading order of  curvature, {\it i.e.}, $\vec{v} \cdot \vec n  =  -\kappa_0$.
\end{enumerate}

\subsection{Behavior at the $O(\varepsilon t)$ time scale.}\label{sec:part2slow} 
Now, we study the behavior at the time scale $O(\tau_2)$ where $\tau_2 = \varepsilon t$. Then, we have $\partial_t = \varepsilon \partial_{\tau_2}$.

First, we consider the behavior when $x$ is away from the interface $\Gamma$.  Consider the expansion 
\begin{equation}\label{eq:tauscaleout}
A(x,t) = \bar A_0(x,\tau_2)+\varepsilon  \bar A_1(x,\tau_2) +  \varepsilon^2 \bar A_2(x,\tau_2) + o(\varepsilon^2)\end{equation}
and insert it into \eqref{e:orig} to obtain  
\begin{align*}
o(1) & =  \varepsilon^{-2}  \bar A_0 (\bar A_0^t \bar A_0-I)  + \varepsilon^{-1}( \bar A_0 \bar A_0^t \bar A_1 + \bar A_0\bar A_1^t \bar A_0+ \bar A_1 \bar A_0^t \bar A_0 -  \bar A_1) \\
& \quad + (\bar A_0 \bar A_0^t \bar A_2 + \bar A_0 \bar A_2^t \bar A_0 + \bar A_2 \bar A_0^t \bar A_0 +\bar A_0 \bar A_1^t \bar A_1+\bar A_1 \bar A_0^t \bar A_1+\bar A_1 \bar A_1^t \bar A_0 - \bar A_2 +\Delta \bar A_0) +o(1).
\end{align*}
Collecting the terms at different orders of $\varepsilon$ and using the behavior at the $O(t)$ time scale, we obtain that at the $O(\tau_2)$ time scale,
\begin{equation} \label{epstouter}
 (\Delta \bar A_0)^t \bar A_0 - \bar A_0^t (\Delta \bar A_0)  = 0, \quad
\bar A_0^t \bar A_0 = I, \quad
\bar A_1 = 0, \ \ \textrm{and} \ \
\bar A_2 = 0.
\end{equation}
For $n=2$,  as shown in Section~\ref{s:Harmonic}, there is a $O_2$ harmonic leading order matrix field, $\bar A_0$ of the form 
\begin{equation} \label{barA0epst}
\bar A_0(x) = \begin{bmatrix} \cos(\eta(x)) & \mp \sin(\eta(x)) \\  \sin(\eta(x)) & \pm \cos(\eta(x))   \end{bmatrix}
\end{equation} 
where the phase $\eta(x)$ satisfies $\Delta \eta =0$. Here, the signs in the second column are chosen depending on whether $x \in \Omega_+$ or $x \in \Omega_-$. 

\medskip

To study the behavior near the interface, we consider the expansion 
\begin{equation}\label{eq:tauscale}
A(x,t) = \tilde A(s,z,\tau_2) = \tilde A_0(s,z,\tau_2)+\varepsilon  \tilde A_1(s,z,\tau_2) +  \varepsilon^2 \tilde A_2(s,z,\tau_2) + o(\varepsilon^2)
\end{equation}
and rewrite \eqref{eq:tderi} using $\tau_2 = \varepsilon t$ to obtain 
\begin{equation}\label{eq:tauderi}
\partial_t A(x,t) = \varepsilon \partial_{\tau_2} \tilde A(s,z,\tau_2) + \varepsilon \frac{\partial s}{\partial \tau_2} \partial_s \tilde A(s,z,\tau_2) + \frac{\partial r}{\partial \tau_2} \partial_z \tilde A(s,z,\tau_2).
\end{equation}
Substituting \eqref{ex:Delta}, \eqref{eq:ansatz2}, \eqref{eq:tauderi}, and  \eqref{eq:tauscale} into \eqref{e:orig}, we have
\begin{align} \label{e:t2Inner}
 & \frac{\partial r}{\partial \tau_2} \partial_z \tilde A_0 +o(1) \\
 \nonumber
 &  =    \varepsilon^{-2} \left(\partial_{zz} \tilde A_0 - \tilde A_0 (\tilde A_0^t \tilde A_0-I) \right) + \varepsilon^{-1} \left( \kappa_0 \partial_z \tilde A_0 + \partial_{zz} \tilde A_1 - ( \tilde A_0 \tilde A_0^t \tilde A_1 + \tilde A_0\tilde A_1^t \tilde A_0+ \tilde A_1 \tilde A_0^t \tilde A_0 -  \tilde A_1) \right) \\
\nonumber
  & \quad +\left[\partial_{ss} \tilde A_0- (z\kappa_0^2-\kappa_1) \partial_z\tilde A_0+\kappa_0 \partial_z \tilde A_1+\partial_{zz} \tilde A_2 \right.\\
\nonumber
  & \quad - \left.(\tilde A_0 \tilde A_0^t \tilde A_2 + \tilde A_0 \tilde A_2^t \tilde A_0 + \tilde A_2 \tilde A_0^t \tilde A_0 +\tilde A_0 \tilde A_1^t \tilde A_1+\tilde A_1 \tilde A_0^t \tilde A_1+\tilde A_1 \tilde A_1^t \tilde A_0 -\tilde A_2)\right]+ o(1) 
\end{align}

Collecting the $O(\varepsilon^{-2})$ terms in \eqref{e:t2Inner} yields
\begin{align} \label{eq:tau2leading}
\partial_{zz} \tilde A_0 - \tilde A_0 (\tilde A_0^t \tilde A_0-I)= 0
\end{align}
which is same as  \eqref{eq:part2slowleading}. The boundary conditions at $z = \pm \infty$ are as in \eqref{eq:part2slowleadingBC}. Hence, $\tilde A_0(s,z,\tau_2)$ has the same transition profile as obtained from \eqref{eq:prof}  in the boundary layer.

\medskip

At $O(\varepsilon^{-1})$ in \eqref{e:t2Inner}, we have
\begin{equation}\label{eq:tau2second}
\kappa_0 \partial_z \tilde A_0 + \partial_{zz} \tilde A_1 - ( \tilde A_0 \tilde A_0^t \tilde A_1 + \tilde A_0\tilde A_1^t \tilde A_0+ \tilde A_1 \tilde A_0^t \tilde A_0 -  \tilde A_1)= 0.
\end{equation}
Taking the Frobenius inner product with $\partial_z \tilde A_0$ on both sides of  \eqref{eq:part2slowsecond1} and integrating both sides with respect to $z$ from $-\infty$ to $\infty$ yields:
\begin{align*}\kappa_0 \int_{-\infty}^{\infty}\langle  \partial_z A_0, \partial_z A_0 \rangle_F dz +   \int_{-\infty}^{\infty} \left\langle \tilde A_1, \partial_{z}\left(\partial_{zz} \tilde A_0- (\tilde A_0\tilde A_0^t \tilde A_0-\tilde A_0)\right) \right\rangle_F dz = 0.
\end{align*}
Using \eqref{eq:tau2leading} again leads us to 
\begin{equation}\label{eq:kappaleading}
\kappa_0 = 0.
\end{equation}
Inserting \eqref{eq:kappaleading} back to \eqref{eq:tau2second} yields
\[\partial_{zz} \tilde A_1 - ( \tilde A_0 \tilde A_0^t \tilde A_1 + \tilde A_0\tilde A_1^t \tilde A_0+ \tilde A_1 \tilde A_0^t \tilde A_0 -  \tilde A_1)= 0.\]
Coupling with the boundary conditions from the outer expansion: $ \tilde A_1(s, z, \tau_2) = 0 $ at $z = \pm \infty$ implies that 
\begin{equation} \label{eq:A1}
\tilde A_1(s, z,\tau_2) = 0 \ \ \textrm{for any}  \ \ z \in (-\infty, \infty).
 \end{equation}
 
\medskip

Collecting the $O(1)$ terms in \eqref{e:t2Inner}, we obtain
\begin{align}\label{eq:tau2third}
\frac{\partial r}{\partial \tau_2} \partial_z \tilde A_0 & = \partial_{ss} \tilde A_0- (z\kappa_0^2-\kappa_1) \partial_z\tilde A_0+\kappa_0 \partial_z \tilde A_1+\partial_{zz} \tilde A_2 \nonumber \\
& \quad -(\tilde A_0 \tilde A_0^t \tilde A_2 + \tilde A_0 \tilde A_2^t \tilde A_0 + \tilde A_2 \tilde A_0^t \tilde A_0 +\tilde A_0 \tilde A_1^t \tilde A_1+\tilde A_1 \tilde A_0^t \tilde A_1+\tilde A_1 \tilde A_1^t \tilde A_0 -\tilde A_2).
\end{align}
Using \eqref{eq:kappaleading} and \eqref{eq:A1} simplifies \eqref{eq:tau2third} to 
\begin{equation} \label{eq:tau2third2}
\frac{\partial r}{\partial \tau_2} \partial_z \tilde A_0 = \partial_{ss} \tilde A_0+\kappa_1 \partial_z\tilde A_0+\partial_{zz} \tilde A_2-(\tilde A_0 \tilde A_0^t \tilde A_2 + \tilde A_0 \tilde A_2^t \tilde A_0 + \tilde A_2 \tilde A_0^t \tilde A_0 -\tilde A_2).
\end{equation}
Taking the Frobenius inner product with $\partial_z \tilde A_0$ on both sides of  \eqref{eq:tau2third2} and integrating both sides with respect to $z$ from $-\infty$ to $\infty$ yields:
\begin{align*}
&(\frac{\partial r}{\partial \tau_2} -\kappa_1 )\int_{-\infty}^{\infty}\langle  \partial_z\tilde A_0, \partial_z\tilde A_0 \rangle_F dz - \int_{-\infty}^{\infty}\langle  \partial_{ss} \tilde A_0, \partial_z \tilde A_0 \rangle_F dz \\
& \quad =   \int_{-\infty}^{\infty} \left\langle \tilde A_2, \partial_{z}\left(\partial_{zz} \tilde A_0- (\tilde A_0\tilde A_0^t \tilde A_0-\tilde A_0)\right) \right\rangle_F dz. 
\end{align*}
Combining with \eqref{eq:tau2leading} and using the definition of $\gamma$ in \eqref{gamma} leads us to 
\begin{equation} \label{e:MotionLaw}
\vec{v} \cdot \vec{n} = - \kappa_1    - \frac{1}{\gamma(s)}\int_{-\infty}^{\infty}\langle  \partial_{ss} \tilde A_0, \partial_z \tilde A_0 \rangle_F \ dz, 
\end{equation}
which gives the motion law for the interface at the $O(\tau_2)$ time scale.

\begin{prop} \label{p:SlowMotionLaw}
For $n=2$,  $\tilde A_0$ admits the transition profile in \eqref{eq:prof} where 
 $\xi_1(s,t)  = \frac{\eta_- (s,t) +\eta_+(s,t)}{2}$ and 
 $\xi_2(s,t) =\frac{\eta_-(s,t) -\eta_+(s,t)}{2}$. 
 Here,  $\eta_+(s,t)$ and $\eta_-(s,t)$  are determined from the outer solution, $\bar A_0(x,t)$, as in  \eqref{barA0epst}. 
The motion law in \eqref{e:MotionLaw} simplifies to 
\begin{equation} \label{eq:motionlawslow}
\vec{v} \cdot \vec{n}  = - \kappa_1 + \frac{1}{\bar \gamma} \left[ \eta_s^2 \right]_\Gamma 
\end{equation}
where
$ \left[ \eta_s^2 \right]_\Gamma  = \left( 
\left( \partial_s \eta_+(s,t) \right)^2  - 
\left( \partial_s \eta_-(s,t) \right)^2 \right)
$ is the jump in the squared tangental derivative of the phase across the interface $\Gamma$ and $\bar \gamma = \int_{-\infty}^{\infty} \left(1-\tanh^2(z/\sqrt{2})\right)^2 \ dz$.
\end{prop}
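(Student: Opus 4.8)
The plan is to insert the explicit $n=2$ transition profile \eqref{eq:prof} into the general motion law \eqref{e:MotionLaw} and simplify its two ingredients: the ``surface tension'' $\gamma(s)$ from \eqref{gamma}, and the surface-diffusion integral $\int_{-\infty}^{\infty}\langle\partial_{ss}\tilde A_0,\partial_z\tilde A_0\rangle_F\,dz$. Throughout I would keep the profile in the factored form $\tilde A_0(s,z,t)=U_1 D(z) U_2^{t}$, where $U_i=U_i(\xi_i(s,t))\in SO_2$ are the rotations by $\xi_1=(\eta_-+\eta_+)/2$ and $\xi_2=(\eta_--\eta_+)/2$, and $D(z)=\mathrm{diag}\!\left(1,\sigma(z)\right)$ with $\sigma(z)=\tanh(z/\sqrt2)$; the structural feature to exploit is that the Frobenius inner product is invariant under left/right multiplication by orthogonal matrices, so every computation collapses to explicit $2\times 2$ manipulations in the $(U_1,U_2)$ frame together with the scalar identity $\sigma''=\sigma^3-\sigma$.

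First, $\partial_z\tilde A_0=U_1(\partial_z D)U_2^{t}$, so by orthogonal invariance $\|\partial_z\tilde A_0\|_F^2=\|\partial_z D\|_F^2=\sigma'(z)^2=\tfrac12\left(1-\tanh^2(z/\sqrt2)\right)^2$, which is independent of $s$ and $t$. Hence $\gamma(s)$ is constant along $\Gamma$, so it may be pulled out of \eqref{e:MotionLaw}; I would denote this constant $\bar\gamma$.

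The main step is the surface-diffusion integral. Differentiating the factored profile in $s$ and using that $J=\left(\begin{smallmatrix}0&-1\\1&0\end{smallmatrix}\right)$ generates $SO_2$ and commutes with it, one gets $\partial_s\tilde A_0=U_1\!\left(\xi_{1,s}JD-\xi_{2,s}DJ\right)U_2^{t}$, and the matrix $\xi_{1,s}JD-\xi_{2,s}DJ$ has a vanishing $(2,2)$ entry. Since $\partial_z D$ is supported exactly in the $(2,2)$ slot, orthogonal invariance yields the pointwise identity $\langle\partial_s\tilde A_0,\partial_z\tilde A_0\rangle_F\equiv 0$ for all $(s,z,t)$ — this is the cancellation that drives the proposition. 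Differentiating this identity in $s$ gives $\langle\partial_{ss}\tilde A_0,\partial_z\tilde A_0\rangle_F=-\langle\partial_s\tilde A_0,\partial_z\partial_s\tilde A_0\rangle_F=-\tfrac12\,\partial_z\|\partial_s\tilde A_0\|_F^2$, a total $z$-derivative, so integrating over $z\in\mathbb R$ leaves only endpoint terms:
\[
\int_{-\infty}^{\infty}\langle\partial_{ss}\tilde A_0,\partial_z\tilde A_0\rangle_F\,dz
=-\tfrac12\left(\|\partial_s\bar A_0^{+}\|_F^2-\|\partial_s\bar A_0^{-}\|_F^2\right),
\]
where $\bar A_0^{\pm}=\lim_{z\to\pm\infty}\tilde A_0$ are, by the boundary conditions of Proposition~\ref{p:MatrixTanhProfile} (equivalently \eqref{barA0epst}), the $SO_2$ rotation of phase $\eta_+$ and the $SO_2^{-}$ matrix of phase $\eta_-$. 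Passing to the limit $\sigma\to\pm1$ in $\|\partial_s\tilde A_0\|_F^2=(\xi_{2,s}-\xi_{1,s}\sigma)^2+(\xi_{1,s}-\xi_{2,s}\sigma)^2$, using $\xi_1+\xi_2=\eta_-$ and $\xi_1-\xi_2=\eta_+$, gives $\|\partial_s\bar A_0^{\pm}\|_F^2=2(\partial_s\eta_\pm)^2$, so the integral equals $-\left((\partial_s\eta_+)^2-(\partial_s\eta_-)^2\right)=-[\eta_s^2]_\Gamma$. Substituting this and the constant $\gamma(s)=\bar\gamma$ into \eqref{e:MotionLaw} yields \eqref{eq:motionlawslow}.

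I expect the only real difficulty to be bookkeeping rather than analysis. The conceptual point that must be noticed is the ``diagonal versus off-diagonal'' orthogonality in the singular-value frame: $\partial_z$ moves only the nontrivial singular value $\sigma$, whereas the tangential derivative $\partial_s$ rotates the singular-vector frames $U_1,U_2$, and this is precisely why $\langle\partial_s\tilde A_0,\partial_z\tilde A_0\rangle_F$ vanishes. The remaining care is in translating the profile parameters $\xi_1,\xi_2$ into the outer phases $\eta_\pm$ at $z=\pm\infty$ — in particular matching the $\det=-1$ block at $z=-\infty$ — and in tracking the single overall multiplicative constant relating $\gamma(s)$ to $\bar\gamma$, which I would cross-check against the classical $n=1$ Allen--Cahn surface-tension normalization of \cite{Pego_1989}.
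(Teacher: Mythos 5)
Your proof is correct, and it reaches the same value of the surface--diffusion integral as the paper, $\int_{-\infty}^{\infty}\langle \partial_{ss}\tilde A_0,\partial_z\tilde A_0\rangle_F\,dz=(\partial_s\eta_-)^2-(\partial_s\eta_+)^2$, hence the same motion law \eqref{eq:motionlawslow}. The route, however, is genuinely different. The paper proceeds by brute force: it writes out $\partial_sU_i$, $\partial_s^2U_i$, the products $(\partial_sU_1)D(\partial_sU_2^t)$, $(\partial_s^2U_1)DU_2^t$, $U_1D(\partial_s^2U_2^t)$ entrywise in trigonometric form, pairs each with $\partial_z\tilde A_0$, and integrates using $\int\sigma\sigma_z\,dz=0$ and $\int\sigma_z\,dz=2$, obtaining $4\xi_{1s}\xi_{2s}$. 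You instead exploit the bi-orthogonal invariance of the Frobenius inner product to reduce everything to the frame matrices $\xi_{1s}JD-\xi_{2s}DJ$ (off-diagonal) and $\partial_zD$ (diagonal), which gives the pointwise orthogonality $\langle\partial_s\tilde A_0,\partial_z\tilde A_0\rangle_F\equiv0$; differentiating that identity turns the integrand into the exact derivative $-\tfrac12\partial_z\|\partial_s\tilde A_0\|_F^2$, so the integral is a pure jump of $\tfrac12\|\partial_s\bar A_0^{\pm}\|_F^2=(\partial_s\eta_\pm)^2$ across the layer. I checked the key identities ($JD$ and $DJ$ are off-diagonal; $\|\partial_s\tilde A_0\|_F^2=(\xi_{2s}-\xi_{1s}\sigma)^2+(\xi_{1s}-\xi_{2s}\sigma)^2$ tends to $2(\partial_s\eta_\pm)^2$ as $\sigma\to\pm1$) and they are right. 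Your argument is shorter, explains structurally \emph{why} the surface-diffusion term is a jump quantity localized at the interface rather than an honest integral over the profile, and would generalize more readily to $n\ge3$; the paper's computation has the advantage of producing the intermediate closed form $4\xi_{1s}\xi_{2s}$ explicitly.

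Two small normalization remarks. First, your $\gamma(s)=\int\sigma_z^2\,dz=\tfrac12\int(1-\tanh^2(z/\sqrt2))^2\,dz$ is the correct value of \eqref{gamma}; the proposition's stated $\bar\gamma$ (and the last line of the paper's proof) drops the factor $\tfrac12$ coming from $\sigma_z=\tfrac1{\sqrt2}(1-\tanh^2(z/\sqrt2))$, so the discrepancy you were planning to cross-check is in the paper, not in your work. Second, with the proposition's convention $[\eta_s^2]_\Gamma=(\partial_s\eta_+)^2-(\partial_s\eta_-)^2$, the integral equals $-[\eta_s^2]_\Gamma$ (as you found), which combines with the minus sign in \eqref{e:MotionLaw} to give the $+$ sign in \eqref{eq:motionlawslow}; the paper's proof mislabels the integral as $+[\eta_s^2]_\Gamma$ in its final display, but the stated motion law is the one your computation supports.
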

\begin{proof}
In the transition profile \eqref{eq:prof}, we write 
$$
\tilde A_0(s,z,t) = 
U_1
D  
U_2^t 
 \qquad \textrm{where }
D = \begin{bmatrix} 1 & 0 \\ 0 & \sigma(z) \end{bmatrix}
\ \textrm{and} \
U_i =  \begin{bmatrix} \cos(\xi_i)   & -\sin(\xi_i) \\  \sin(\xi_i)   & \cos(\xi_i)  \end{bmatrix}, \ \ i =1,2. 
$$
Here we write $\sigma(z) = \tanh \left( \frac{z}{\sqrt{2}} \right)$. 
We compute 
\[ \partial_s U_i =  
\begin{bmatrix} -\sin(\xi_i) & -\cos (\xi_i) \\ \cos(\xi_i) & -\sin(\xi_i) \end{bmatrix}  \xi_{is}, 
\]
and
\[ \partial_s^2 U_i =   
\begin{bmatrix} -\sin(\xi_i) & -\cos (\xi_i) \\ \cos(\xi_i) & -\sin(\xi_i) \end{bmatrix} 
 \xi_{iss}
+
\begin{bmatrix} -\cos(\xi_i) & \sin (\xi_i) \\ -\sin(\xi_i) & -\cos(\xi_i) \end{bmatrix} 
\xi_{i s}^2. 
\]
Then we compute 
\[ (\partial_s U_1) D (\partial_s U^t_2) = 
\begin{bmatrix}
\sin(\xi_1) \sin(\xi_2) + \cos(\xi_1)  \cos(\xi_2) \sigma&- \sin(\xi_1) \cos(\xi_2) +   \cos(\xi_1)\sin(\xi_2) \sigma\\
 - \cos(\xi_1)\sin(\xi_2)+ \sin(\xi_1) \cos(\xi_2)   \sigma  &  \cos(\xi_1) \cos(\xi_2)+\sin(\xi_1) \sin(\xi_2)\sigma
\end{bmatrix} \xi_{1s}\xi_{2s} ,
\]
\begin{align*}
(\partial_s^2 U_1) D U_2^t &=   
\begin{bmatrix}
-\sin(\xi_1) \cos(\xi_2) + \cos(\xi_1)  \sin(\xi_2) \sigma&- \sin(\xi_1) \sin(\xi_2) -   \cos(\xi_1)\cos(\xi_2) \sigma\\
 \cos(\xi_1)\cos(\xi_2)+ \sin(\xi_1) \sin(\xi_2)   \sigma  &  \cos(\xi_1) \sin(\xi_2)-\sin(\xi_1) \cos(\xi_2)\sigma
\end{bmatrix} 
\xi_{1ss}  \\
& \quad +   
\begin{bmatrix}
-\cos(\xi_1) \cos(\xi_2) - \sin(\xi_1)  \sin(\xi_2) \sigma&- \cos(\xi_1) \sin(\xi_2) + \sin(\xi_1)\cos(\xi_2) \sigma\\
 -\sin(\xi_1)\cos(\xi_2)+ \cos(\xi_1) \sin(\xi_2)   \sigma  &  -\sin(\xi_1) \sin(\xi_2)-\cos(\xi_1) \cos(\xi_2)\sigma
\end{bmatrix} 
\xi_{1s}^2,
\end{align*}
\begin{align*}
U_1 D  (\partial^2_s U_2^t )
& =  \begin{bmatrix}
-\sin(\xi_2) \cos(\xi_1) + \cos(\xi_2)  \sin(\xi_1) \sigma&     \cos(\xi_1)\cos(\xi_2) +\sin(\xi_1) \sin(\xi_2) \sigma\\
  -\sin(\xi_1) \sin(\xi_2) - \cos(\xi_1)\cos(\xi_2)  \sigma  &  \cos(\xi_2) \sin(\xi_1)-\sin(\xi_2) \cos(\xi_1)\sigma
\end{bmatrix} 
\xi_{2ss}  \\
& \quad +   
\begin{bmatrix}
-\cos(\xi_1) \cos(\xi_2) - \sin(\xi_1)  \sin(\xi_2) \sigma&
- \cos(\xi_1) \sin(\xi_2) + \sin(\xi_1)\cos(\xi_2) \sigma \\
 -\sin(\xi_1)\cos(\xi_2)+ \cos(\xi_1) \sin(\xi_2)   \sigma &  -\sin(\xi_1) \sin(\xi_2)-\cos(\xi_1) \cos(\xi_2)\sigma
\end{bmatrix}
\xi_{2s}^2,
\end{align*}
and
\[ 
\partial_z \tilde A_0 = 
\begin{bmatrix} \sin (\xi_1) \sin (\xi_2)   & -\sin (\xi_1) \cos (\xi_2)  \\ 
-\cos (\xi_1) \sin (\xi_2)  &  \cos (\xi_1) \cos (\xi_2) \end{bmatrix} \sigma_z .
\]
Using 
\[
\int_{-\infty}^{\infty} \sigma \sigma_z \ dz 
= \frac{1}{2}\int_{-\infty}^{\infty} (\sigma^2)_z \ dz 
= 0 
\quad \textrm{and} \quad 
\int_{-\infty}^{\infty} \sigma_z \ dz = 2 
\]
along with the above identities, we have
\begin{align*}
\int_{-\infty}^{\infty}\langle  \partial_{ss} \tilde A_0, \partial_z \tilde A_0 \rangle_F \ dz 
&= 4 \xi_{1s} \xi_{2s}  \\
&= 4 \  \partial_s \left(  \frac{\eta_- (s,t) +\eta_+(s,t)}{2}  \right) \partial_s \left( \frac{\eta_- (s,t) - \eta_+(s,t)}{2} \right)  \\
&= (\partial_s \eta_-)^2- (\partial_s \eta_+)^2  \\
&=  [\eta_s^2]_\Gamma 
\end{align*}
and 
\begin{align*}
\gamma(s) = \int_{-\infty}^{\infty}\langle  \partial_z \tilde A_0, \partial_z \tilde A_0 \rangle_F \ dz  = \int_{-\infty}^{\infty} \sigma_z^2 \ dz = \int_{-\infty}^{\infty} \left(1-\tanh^2(z/\sqrt{2})\right)^2 \ dz
\end{align*}
which is independent of $s$ and denoted by $\bar \gamma$.
\end{proof}

\subsubsection{Summary of the behavior at the $O(\varepsilon t)$ time scale:} \label{sec:sumpart2slow}
\begin{enumerate}
\item Away from the interface, $\bar A_0$, $\bar A_1$, and $\bar A_2$ satisfy \eqref{epstouter}. In particular, when $n =2$, $\bar A_0$ takes the form in \eqref{barA0epst} where  the phase $\eta$ satisfies $\Delta \eta = 0$.

\item The interface moves in the normal direction according to the motion law given in \eqref{e:MotionLaw}. In the case where $n=2$, the second term of the motion law is the jump in the squared tangental derivative of the phase across the interface $\Gamma$ as in \eqref{eq:motionlawslow}. 
\end{enumerate}

\section{Numerical experiments}\label{sec:num}
In this section, we perform a variety of numerical experiments to support, verify, and illustrate our analytical results in Section~\ref{sec:part1} and Section~\ref{sec:part2}. The algorithm we use is summarized in Section~\ref{sec:alg} and the numerical examples are described in Section~\ref{sec:numex}.

\subsection{Algorithm to solve \eqref{e:orig} and implementation details.}\label{sec:alg}
To numerically solve  \eqref{e:orig}, we use an efficient diffusion generated method recently developed in \cite{osting2017}. This method generalizes the Merriman-Bence-Osher method for mean curvature flow  \cite{merriman1994motion}  and methods for the Ginzburg-Landau energy \cite{Ruuth_2001,Viertel_2019}. 
The algorithm alternates a diffusion and a projection step as summarized in Algorithm~\ref{a:MBO}. 
In \cite{osting2017}, the Lyapunov function of Esedoglu and Otto \cite{esedoglu2015threshold} was extended to show that the method is non-increasing on iterates and hence, unconditionally stable. 
It was also proven that the spatially discretized iterates converge to a stationary solution in a finite number of iterations.
We refer to \cite{osting2017} for more details and properties of the algorithm.

\begin{algorithm}[t!]
\DontPrintSemicolon
 \KwIn{a time step $\tau > 0$ and initial condition $A_0 \in H^1(\Omega; O_n)$.}
 \KwOut{a sequence of matrix-valued functions $A_s \in H^1(\Omega; O_n)$, $s=1, 2, \ldots$ that approximately solve \eqref{e:orig} at times, $s\tau$.}
 Set $s=1$\;
 \While{not converged}{
{\bf 1.  Diffusion Step.} Solve the initial value problem for the diffusion equation  until time $\tau$ with initial value given by $A_{s-1}(x)$:
\begin{align*}
&\partial_t A(t,x) = \Delta A (t,x) \\
&A(0,x) = A_{s-1}(x).
\end{align*}
Let $\tilde A(x) = A(\tau,x)$\;
{\bf 2. Projection Step.} Set $A_s(x) = \Pi_{O_n} \tilde A(x) $\;
Set $s = s+1$\;
 }
\caption{A diffusion generated method for solving  \eqref{e:orig} \cite{osting2017}. } 
\label{a:MBO}
\end{algorithm}

We implemented the algorithm in MATLAB. 
In all experiments, we consider the case when $n = 2$ on a flat torus $\Omega = [-1/2, 1/2]^2$ discretized using $1024 \times 1024$ uniform grid points and set $\tau = 0.015625$. 
The heat diffusion equation in Algorithm~\ref{a:MBO} is efficiently solved using the fast Fourier transform (FFT).
The convergence criteria of the algorithm is taken to be 
$$
\int_\Omega \|A_s(x)-A_{s-1}(x)\|_F \ dx \leq tol,
$$ 
for  $tol = 10^{-6}$. 
All reported results were obtained on a laptop with a 2.7GHz Intel Core i5 processor and 8GB of RAM.  

Here we visualize an $O_2$ valued field by plotting the vector field generated by the first column vector. The second column vector is orthogonal to the first and the direction is indicated by color, when necessary. 

\newpage
\subsection{Numerical examples}\label{sec:numex}
\subsubsection{Evolution of $SO_n$-valued fields}\label{sec:single}
We first perform a numerical experiment to verify the results in Section~\ref{sec:part1} for the time evolution of a single-signed determinant initial matrix-valued field. Without loss of generality, we consider the case where the initial matrix-valued field takes values  in $SO_n$.

Figures~\ref{fig:1} and \ref{fig:2} display the evolution of an $SO_2$ matrix-valued field with the initial condition given by 
\begin{equation}\label{eq:initialfield}
A_0(x) = \begin{bmatrix}
\cos \eta(x) &  -\sin \eta(x) \\
\sin \eta(x) & \cos \eta(x) \\
\end{bmatrix},
\end{equation}
for different choices of $\eta \colon \Omega \to \mathbb R$.

In Figure~\ref{fig:1}, we take 
$$
\eta(x) = \frac{\pi}{2} \sin(2\pi (3x_1+2x_2)) \qquad \textrm{for} \ x = (x_1,x_2) \in \Omega. 
$$
From Figure~\ref{fig:1}, we see that the matrix-valued field evolves toward a uniform matrix-valued field, which, as discussed in Section~\ref{s:Harmonic}, is a stationary state of the $O_n$ diffusion equation.

 \begin{figure}[t]
\includegraphics[width = 0.24 \textwidth,clip,trim= 5cm 1cm 5cm 0cm]{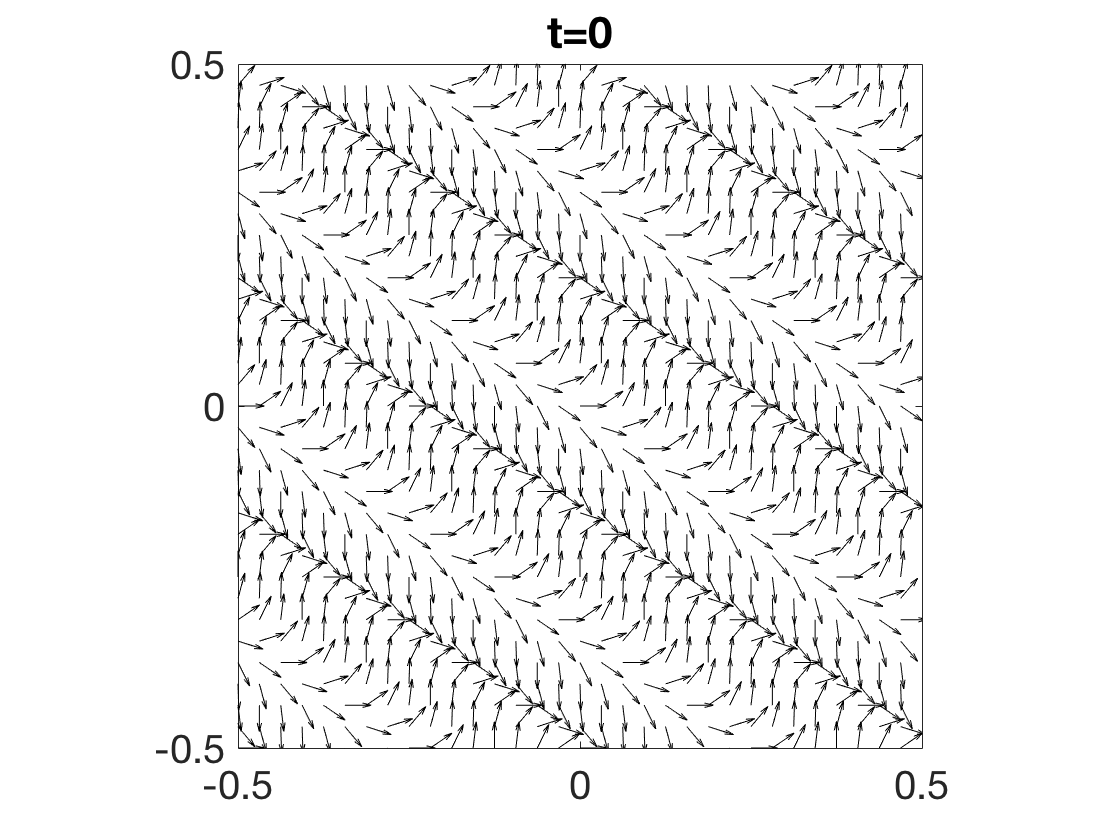}
\includegraphics[width = 0.24 \textwidth,clip,trim= 5cm 1cm 5cm 0cm]{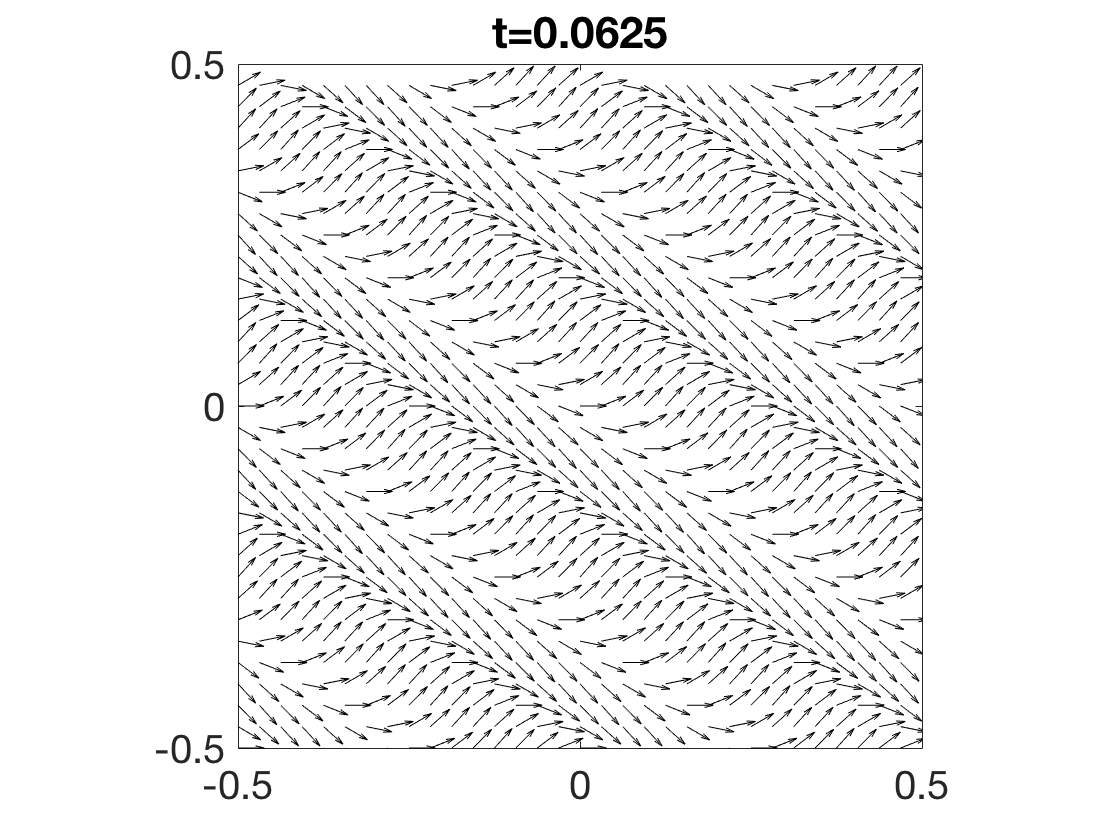}
\includegraphics[width = 0.24 \textwidth,clip,trim= 5cm 1cm 5cm 0cm]{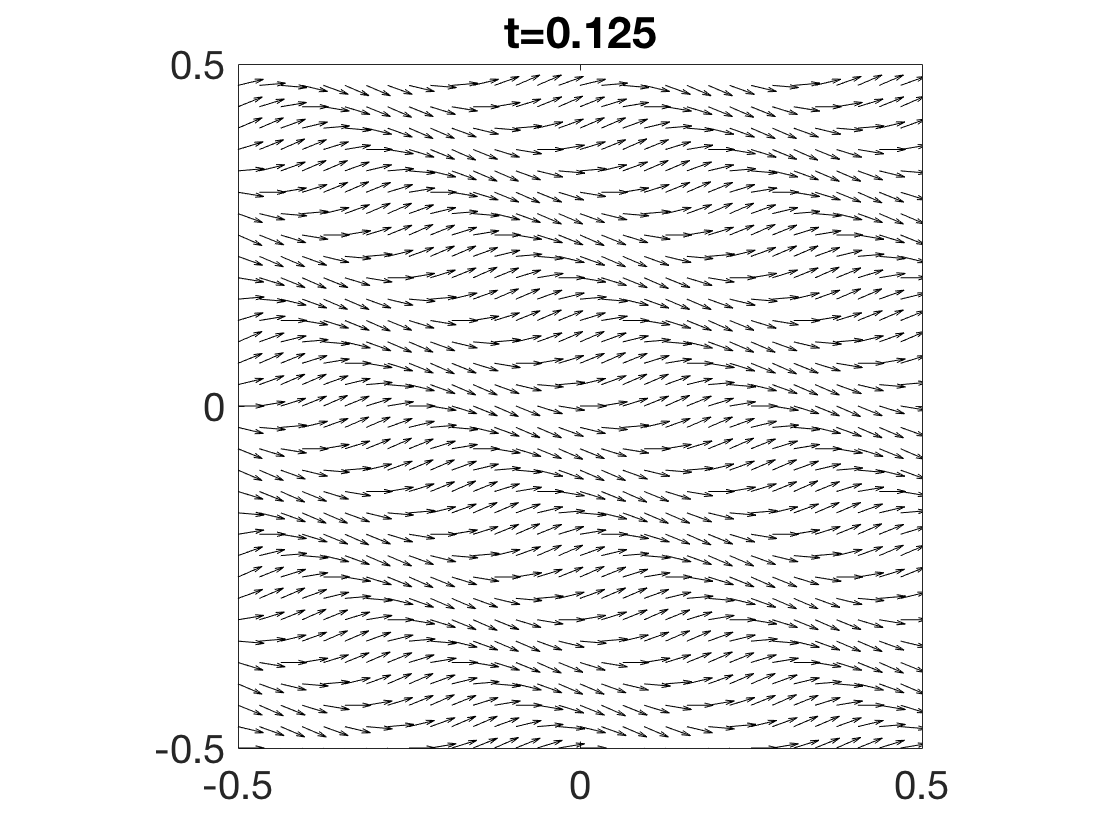} 
\includegraphics[width = 0.24 \textwidth,clip,trim= 5cm 1cm 5cm 0cm]{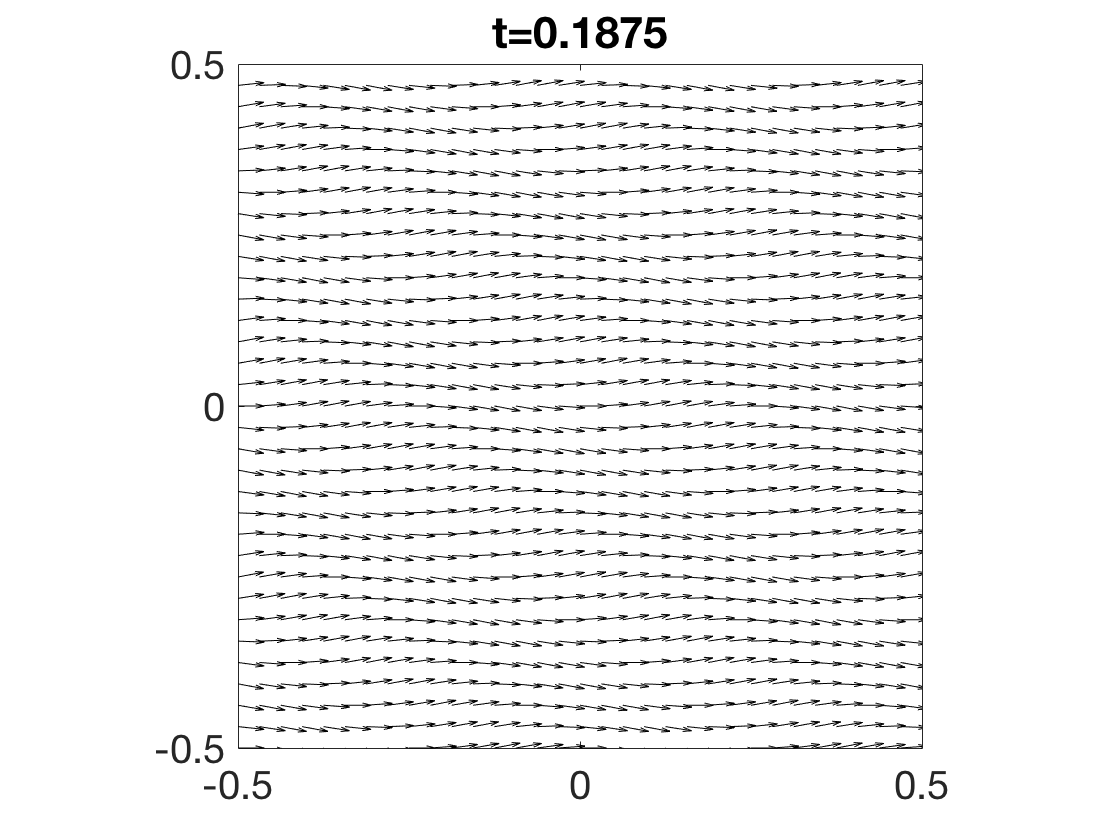}
\includegraphics[width = 0.24 \textwidth,clip,trim= 5cm 1cm 5cm 0cm]{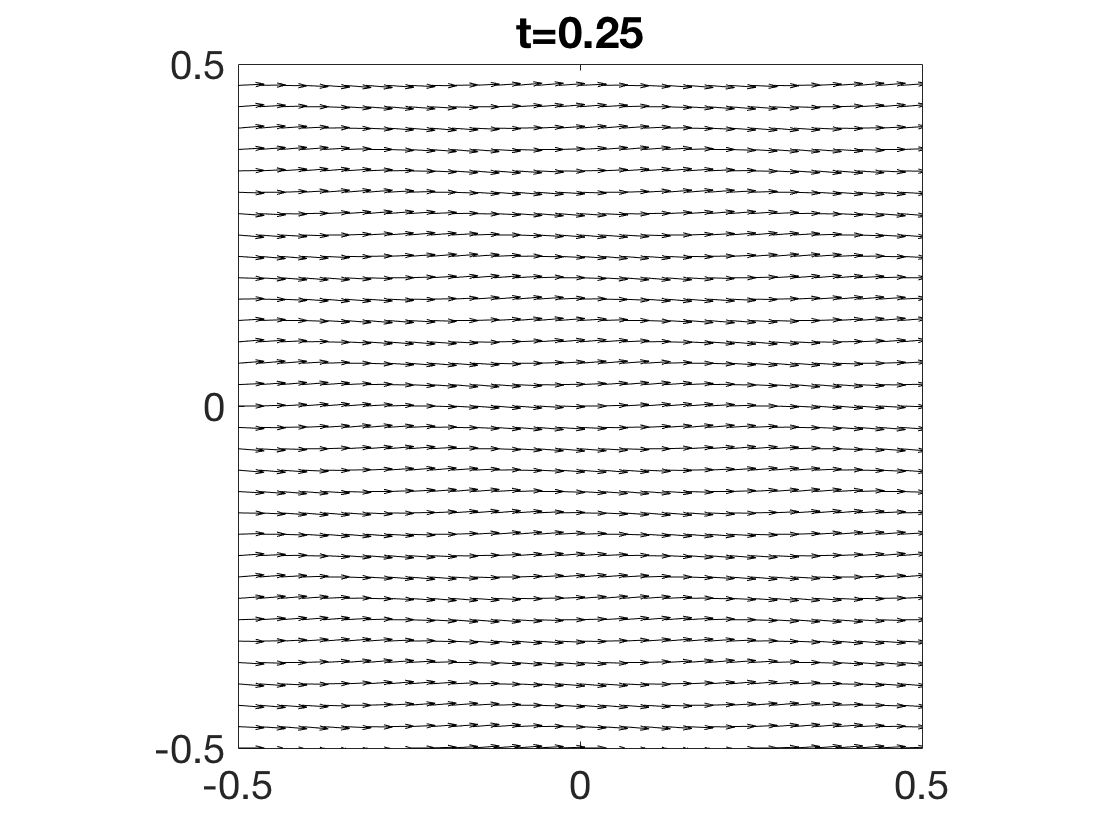}
\includegraphics[width = 0.24 \textwidth,clip,trim= 5cm 1cm 5cm 0cm]{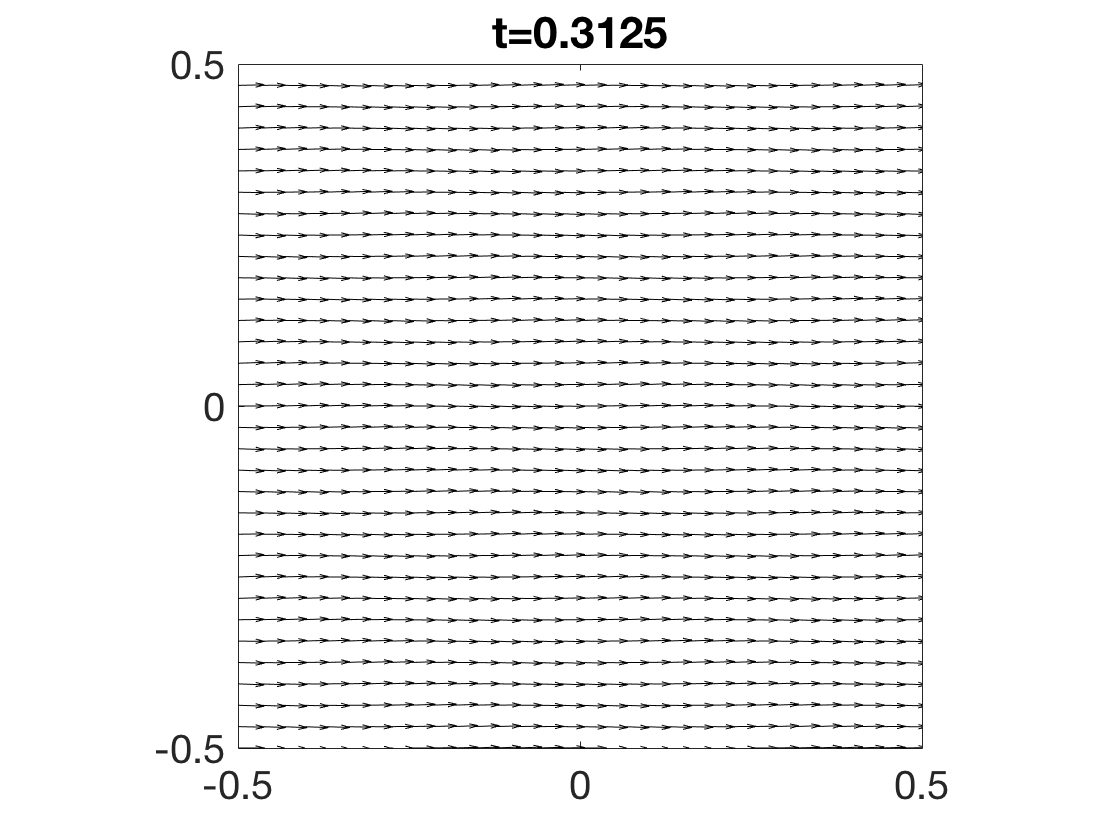}
\includegraphics[width = 0.24 \textwidth,clip,trim= 5cm 1cm 5cm 0cm]{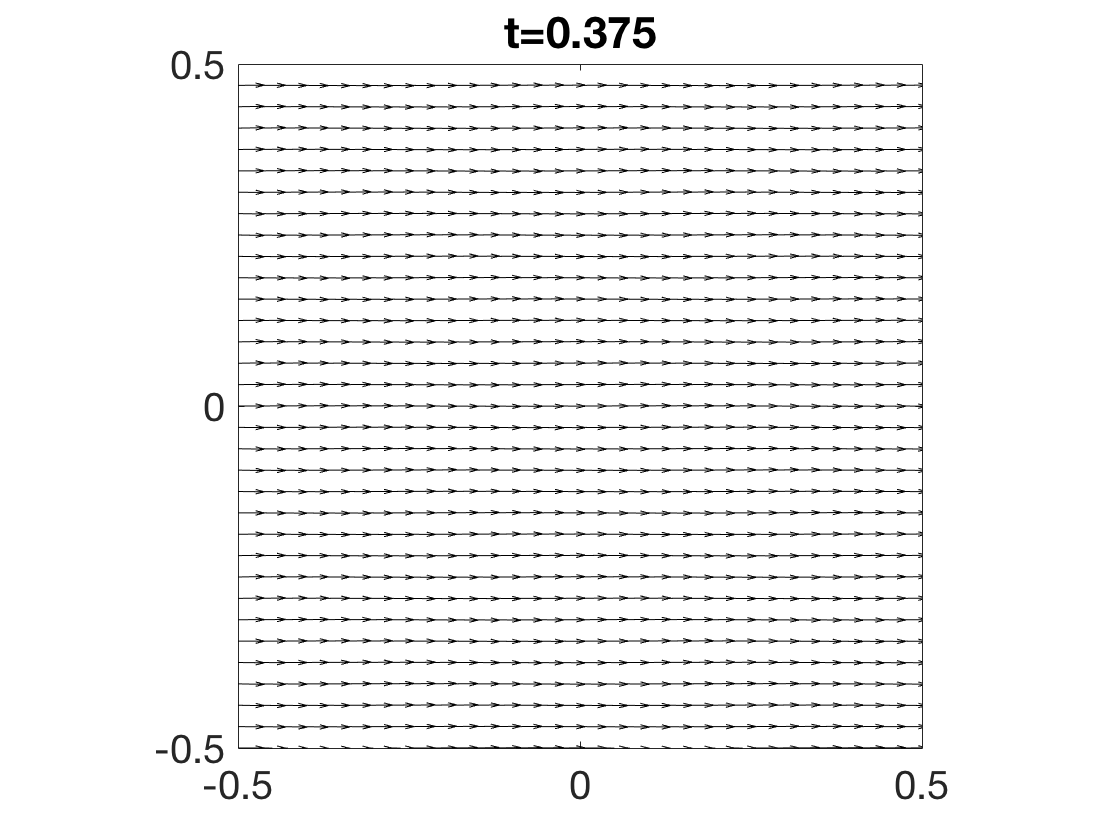}
\includegraphics[width = 0.24 \textwidth,clip,trim= 5cm 1cm 5cm 0cm]{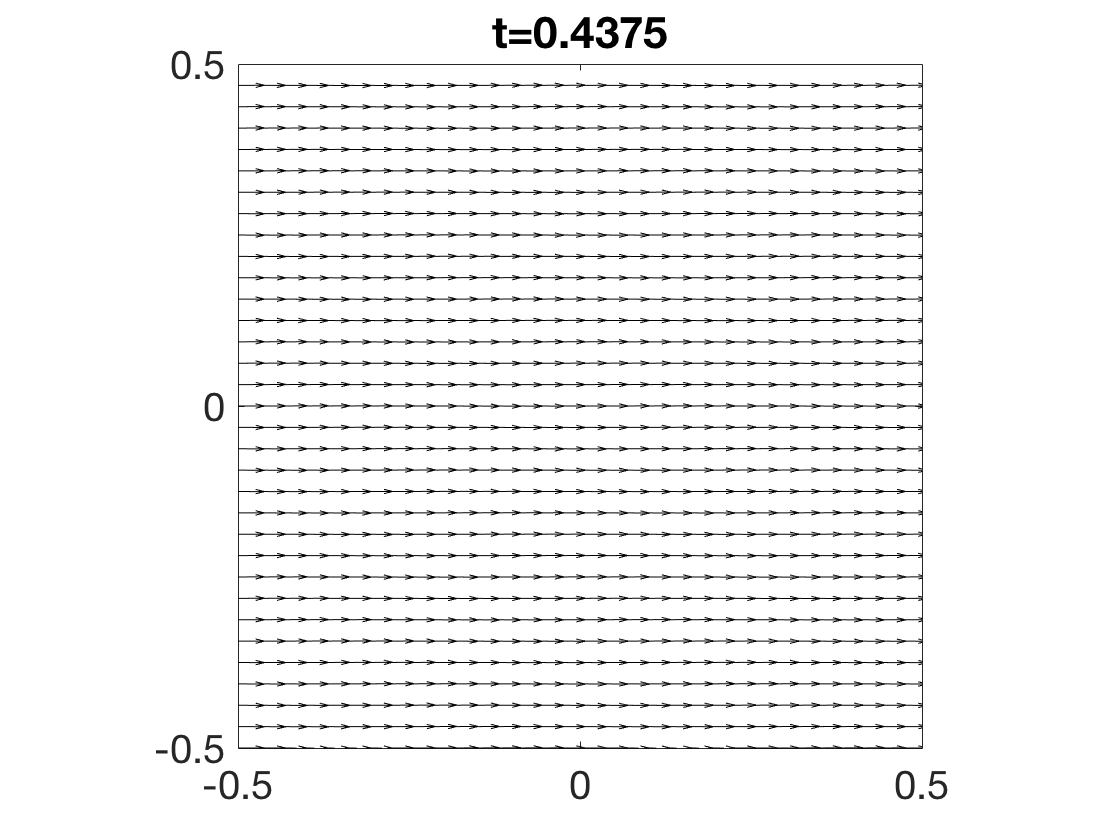}
\caption{Snapshots of the time evolution of an initial $SO_2$ matrix-valued field. The initial field is given in \eqref{eq:initialfield} with $\eta(x) = \frac{\pi}{2} \sin(2\pi (3x_1+2x_2))$. See Section~\ref{sec:single}.} \label{fig:1}
\end{figure}

In Figure~\ref{fig:2}, we set 
$$
\eta(x) = 2 \pi x_1+\frac{\pi}{2} \sin(2\pi x_1). 
$$
We observe that the field evolves toward a field with $\eta(x) = 2 \pi x_1$. Again, since $\Delta \eta = 0$, this is a harmonic orthogonal matrix-valued field; see Section~\ref{s:Harmonic}.

 \begin{figure}[ht]
\includegraphics[width = 0.24 \textwidth,clip,trim= 2cm 0cm 3cm 0cm]{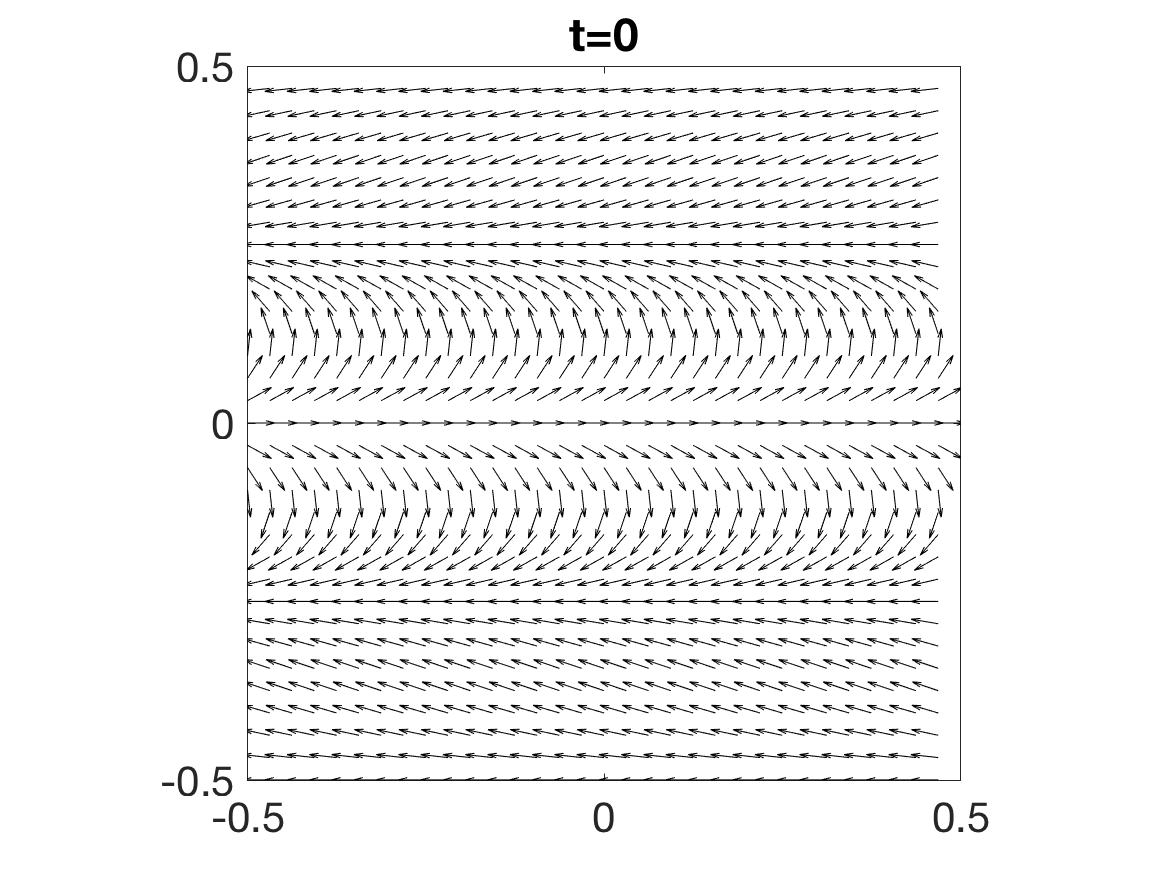}
\includegraphics[width = 0.24 \textwidth,clip,trim= 2cm 0cm 3cm 0cm]{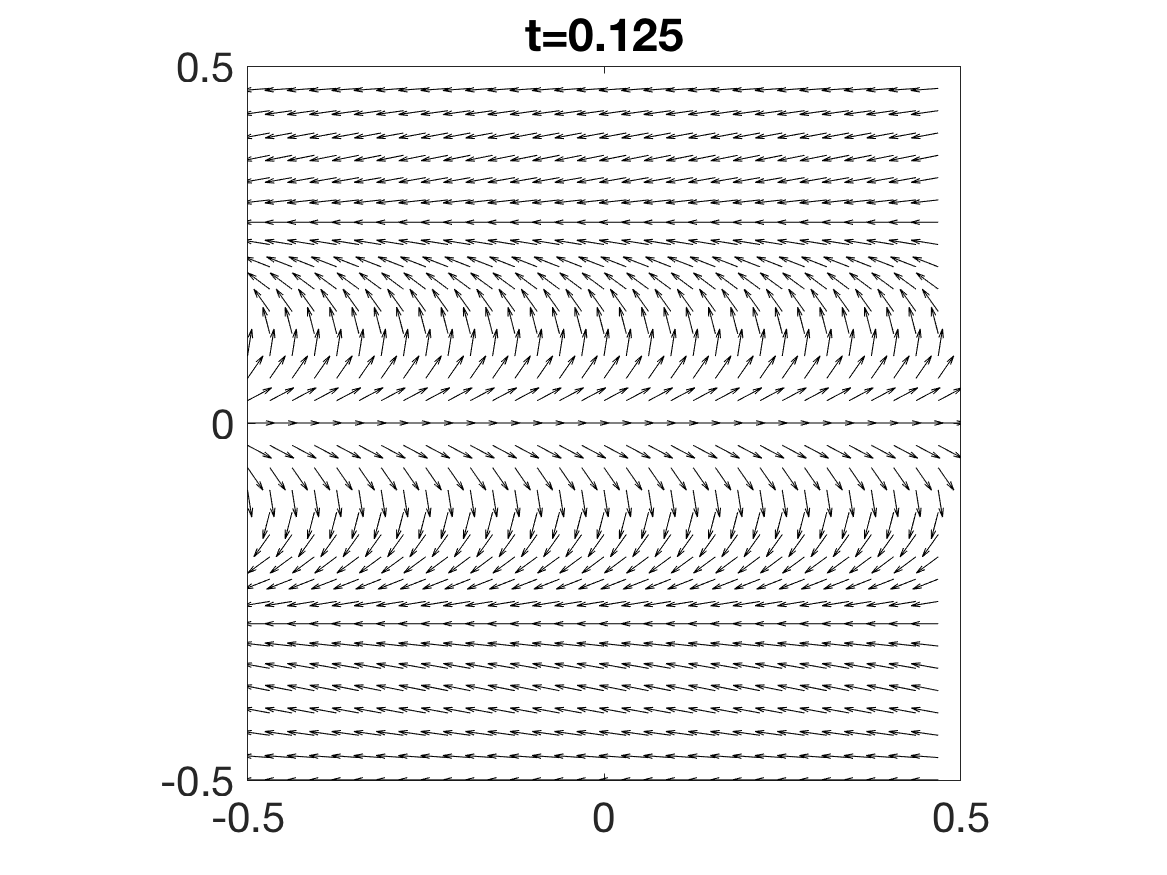}
\includegraphics[width = 0.24 \textwidth,clip,trim= 2cm 0cm 3cm 0cm]{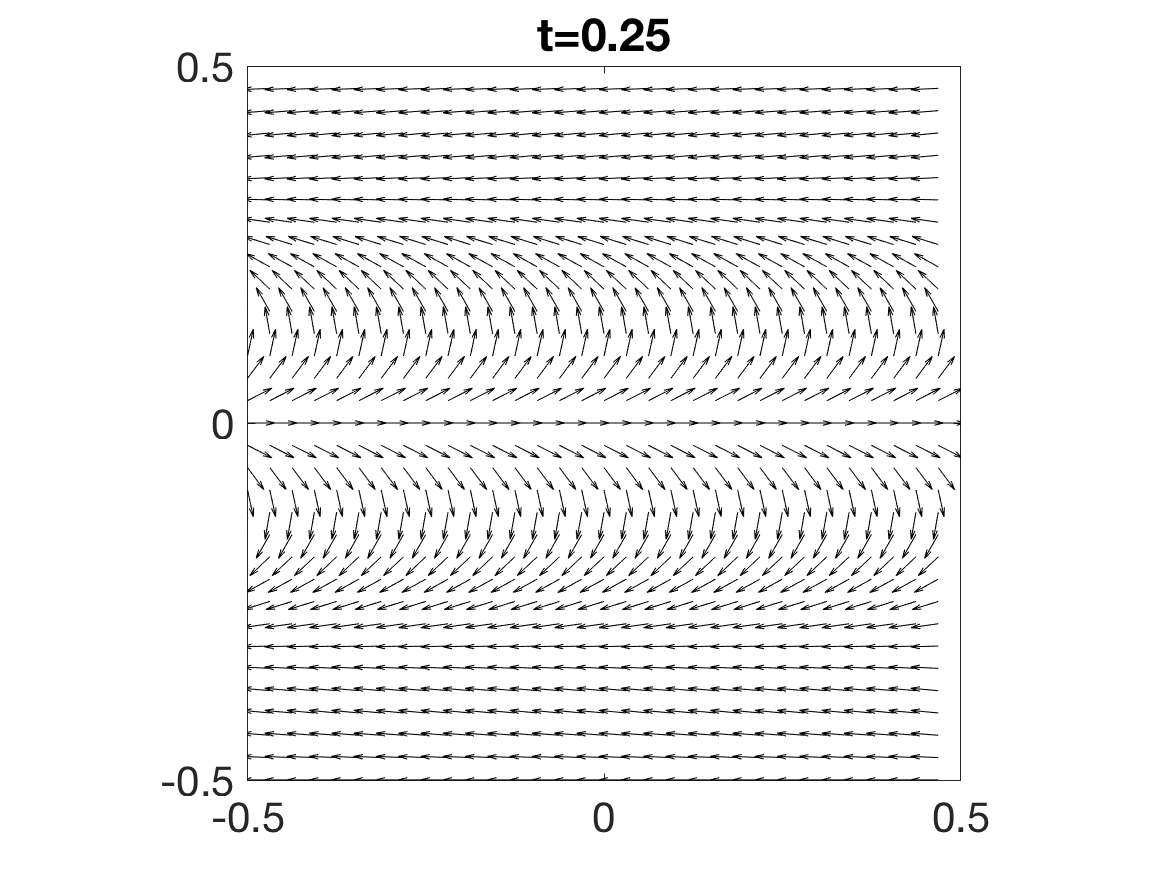} 
\includegraphics[width = 0.24 \textwidth,clip,trim= 2cm 0cm 3cm 0cm]{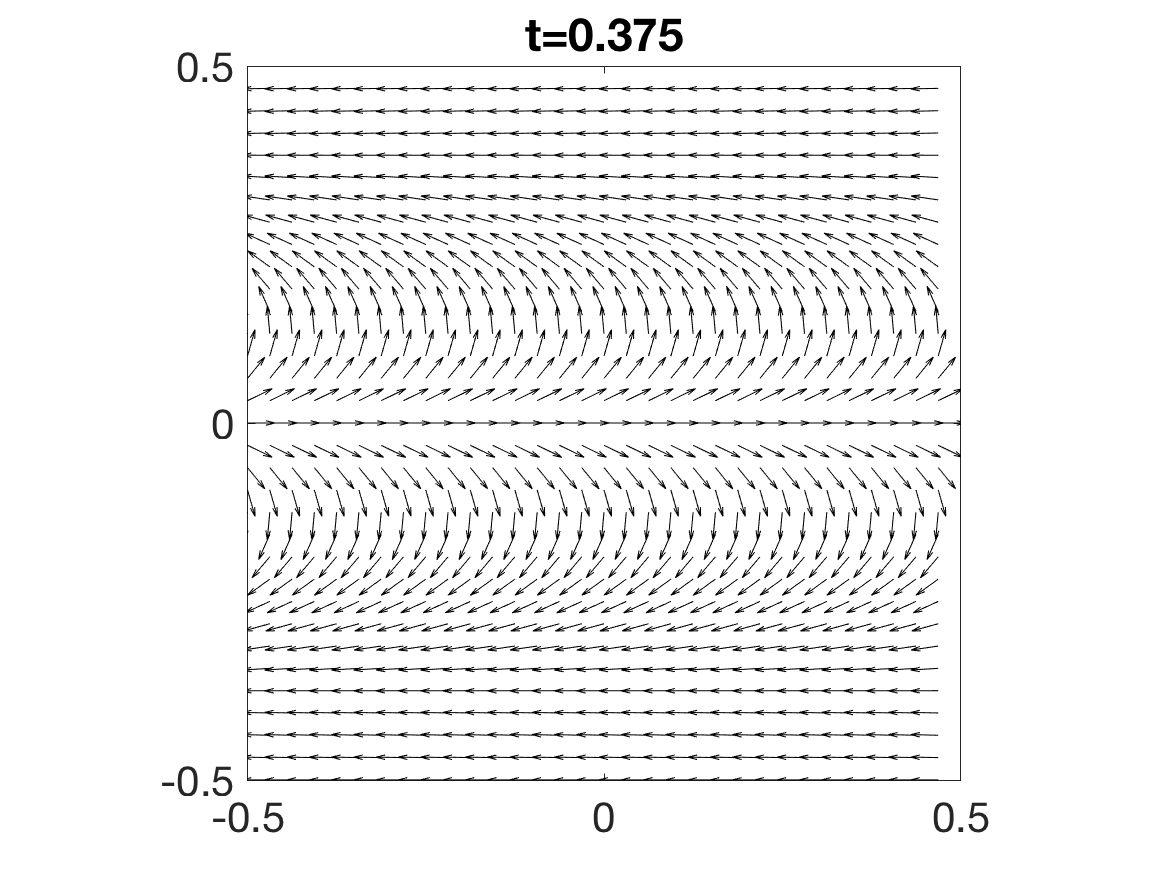}
\includegraphics[width = 0.24 \textwidth,clip,trim= 2cm 0cm 3cm 0cm]{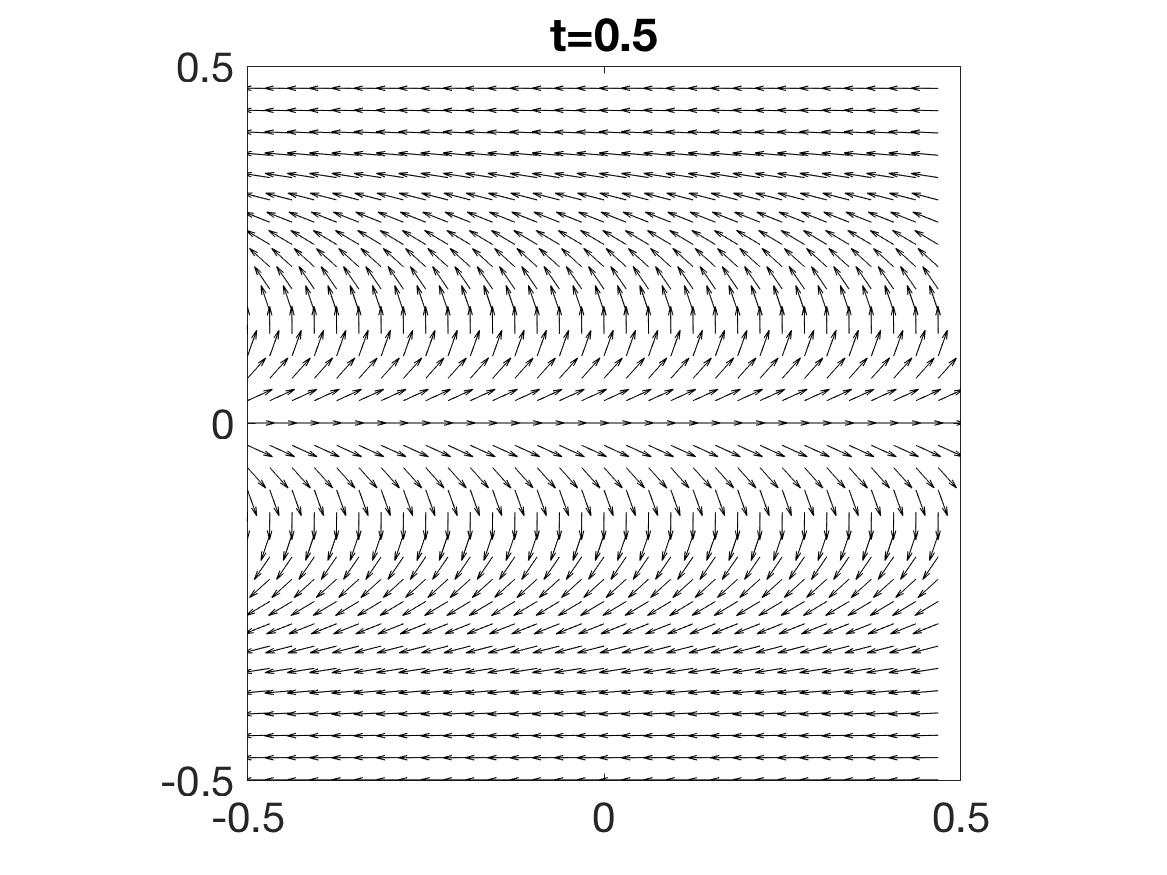}
\includegraphics[width = 0.24 \textwidth,clip,trim= 2cm 0cm 3cm 0cm]{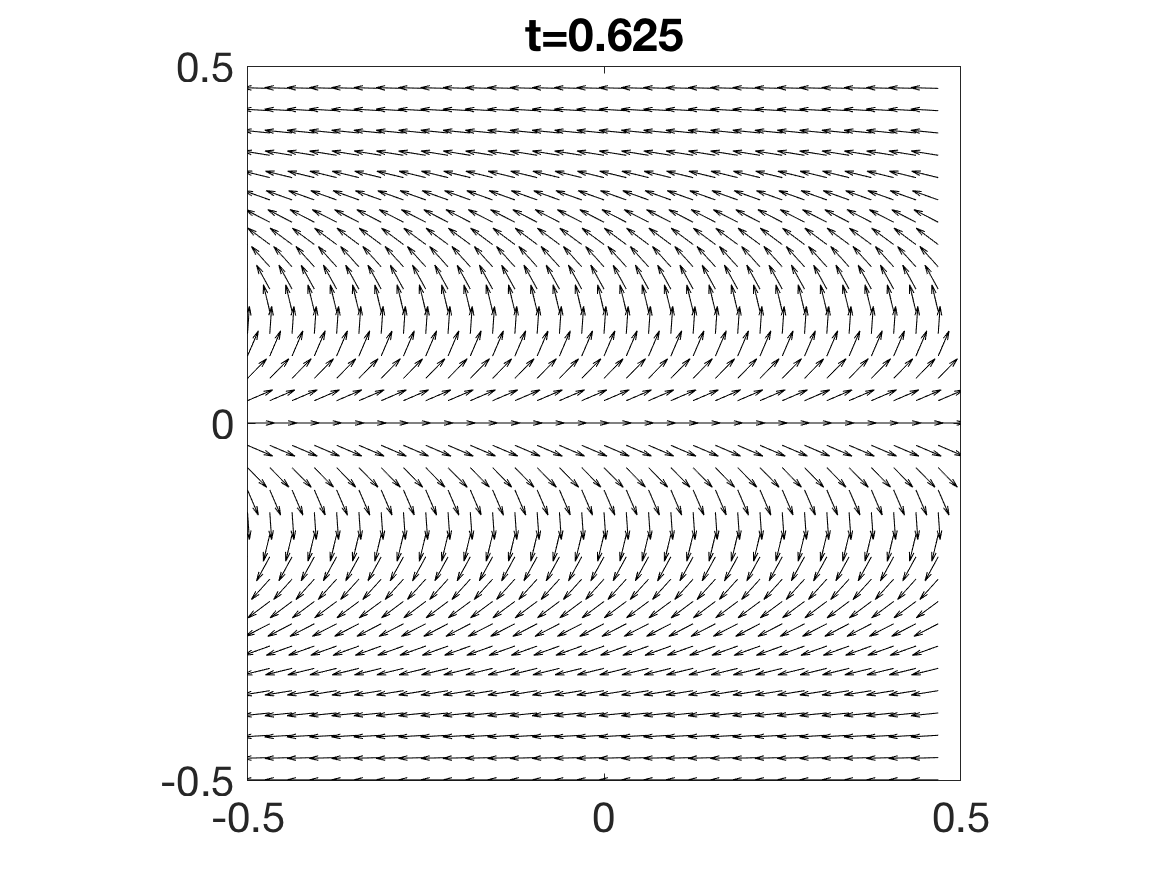}
\includegraphics[width = 0.24 \textwidth,clip,trim= 2cm 0cm 3cm 0cm]{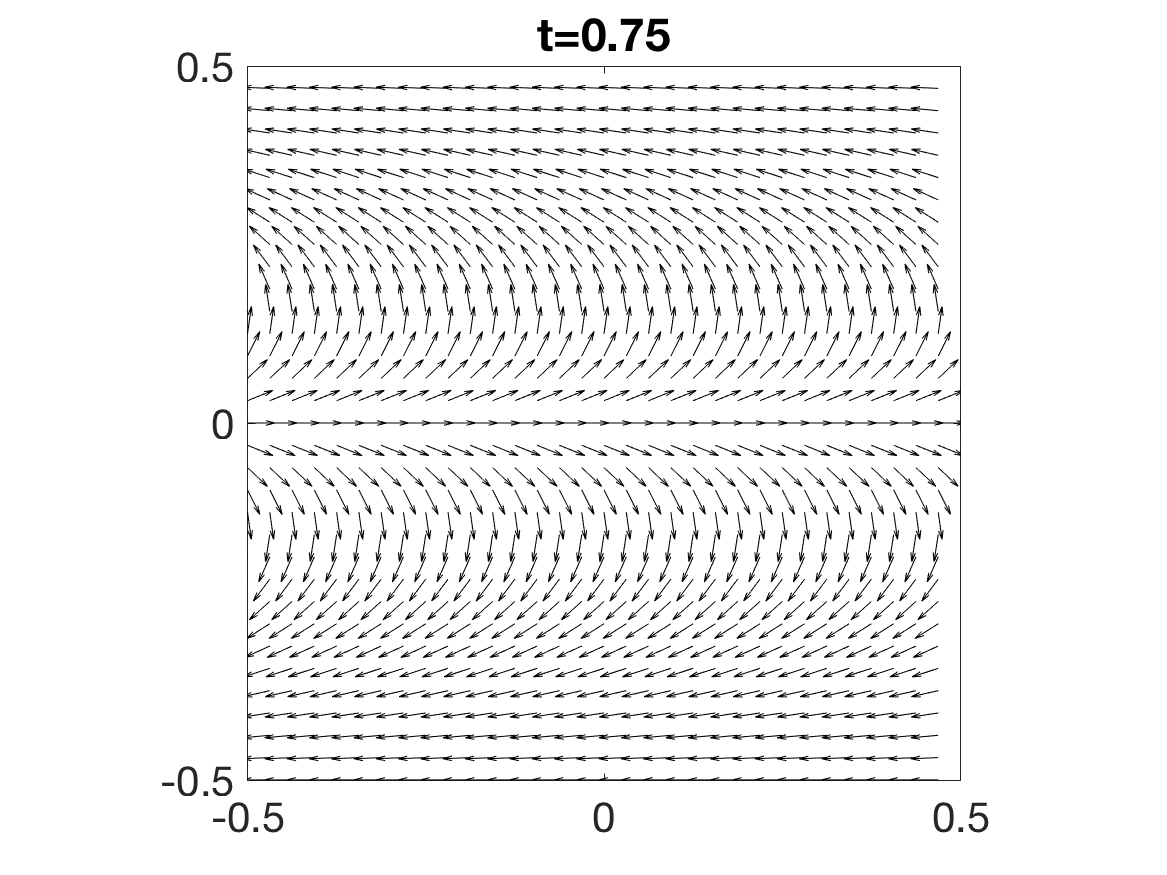}
\includegraphics[width = 0.24 \textwidth,clip,trim= 2cm 0cm 3cm 0cm]{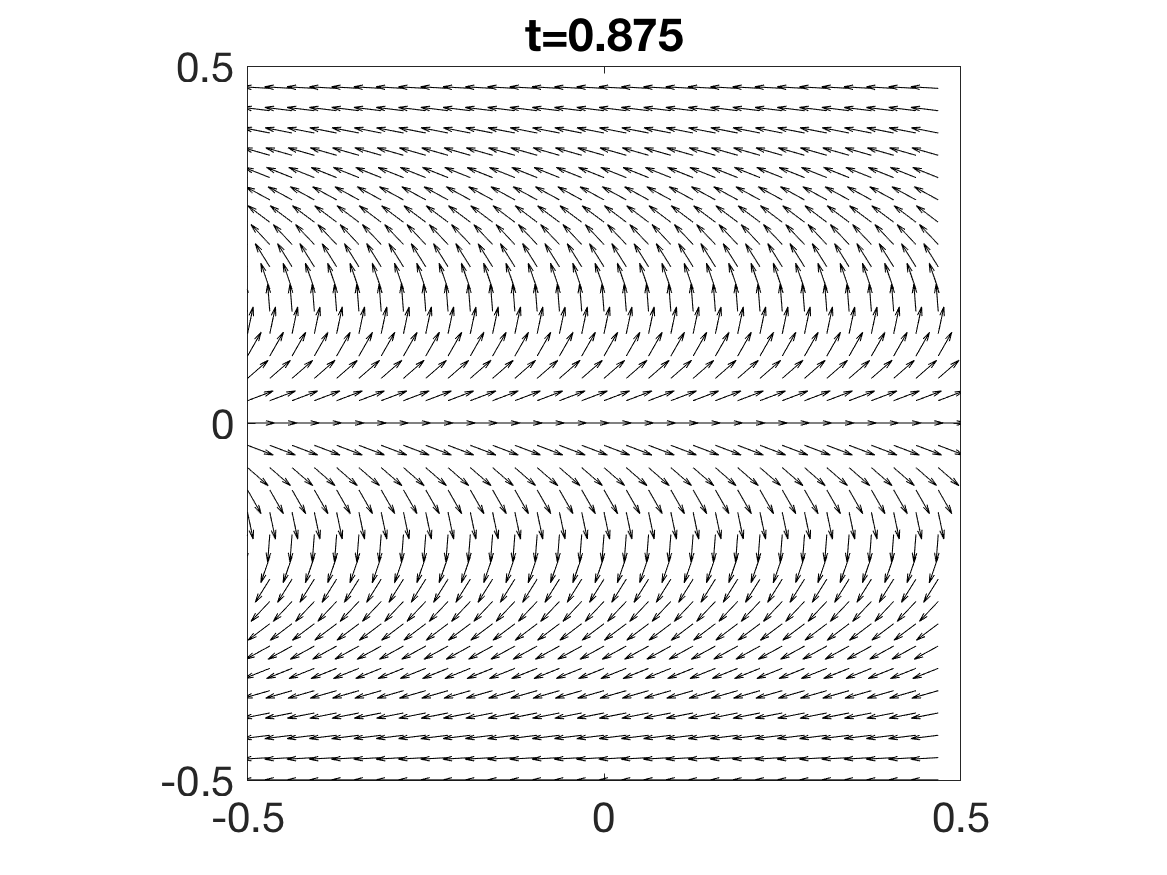}
\includegraphics[width = 0.24 \textwidth,clip,trim= 2cm 0cm 3cm 0cm]{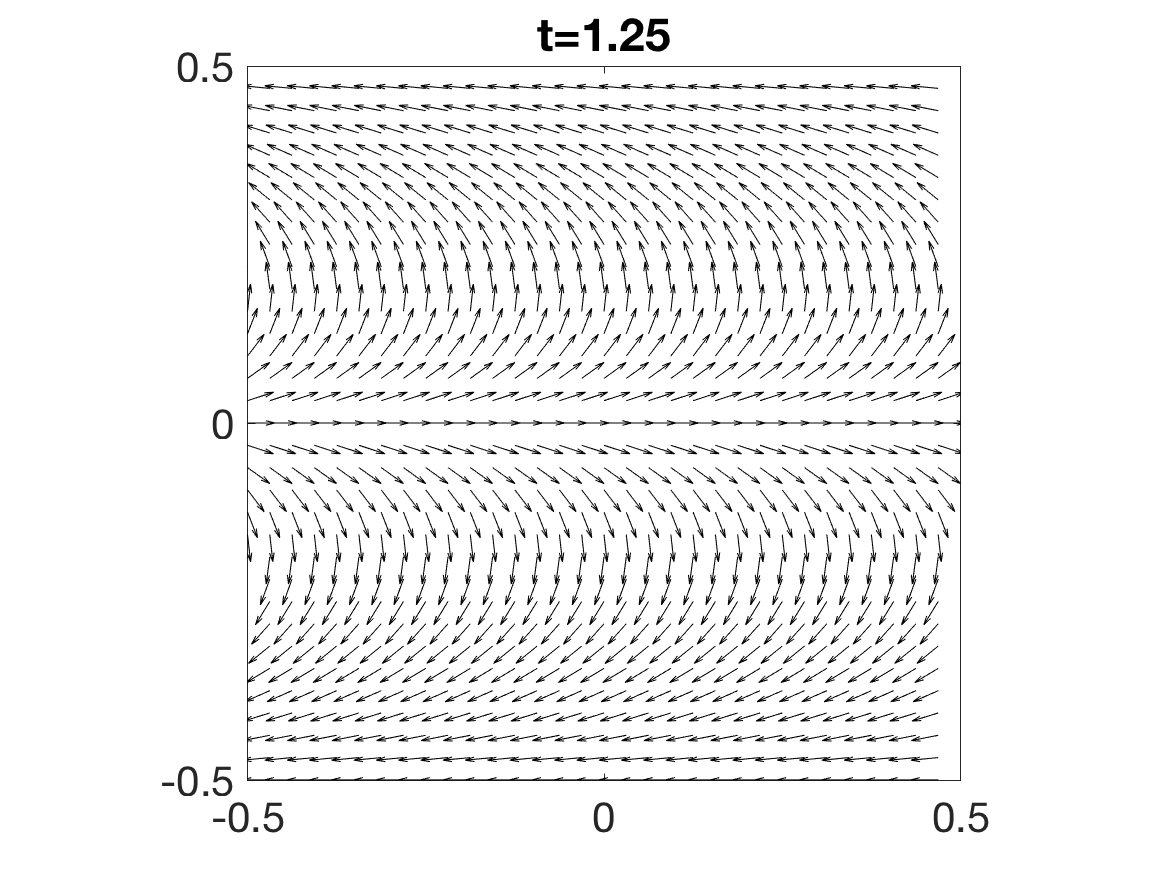}
\includegraphics[width = 0.24 \textwidth,clip,trim= 2cm 0cm 3cm 0cm]{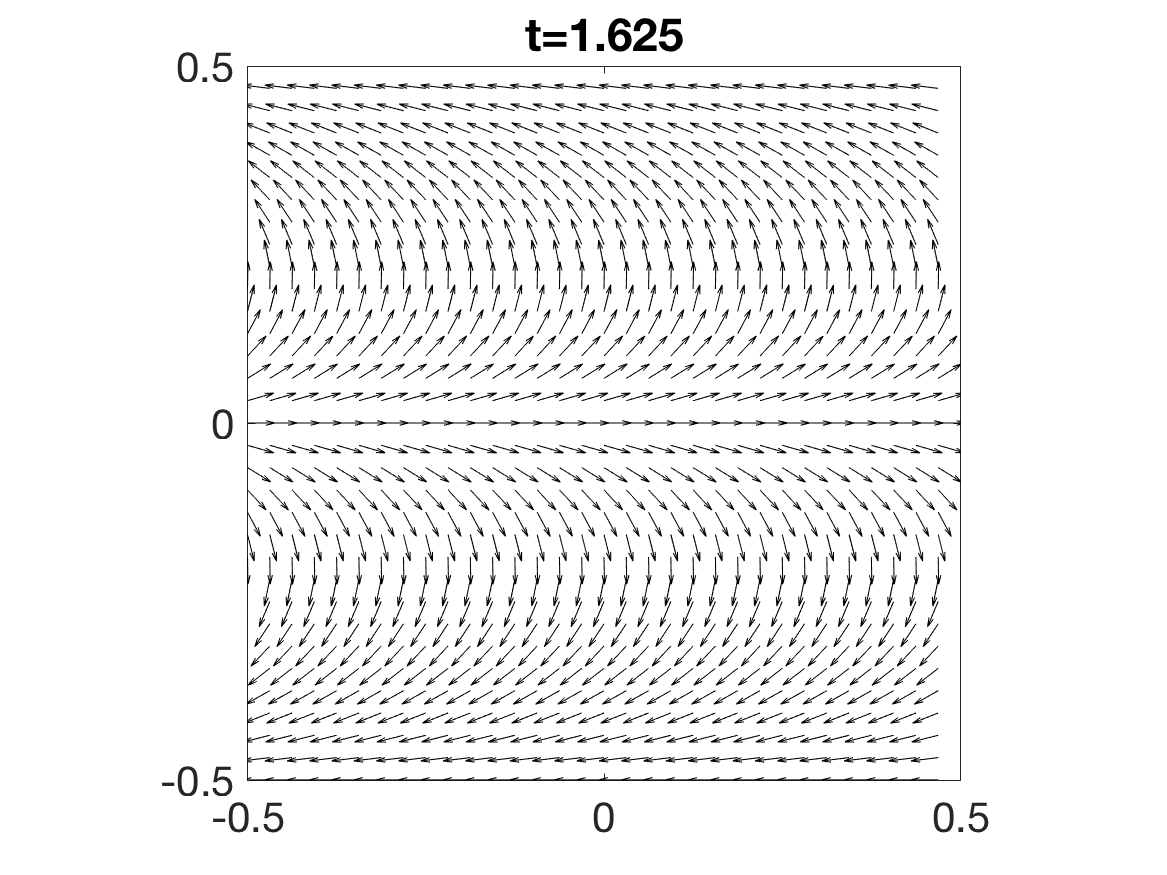}
\includegraphics[width = 0.24 \textwidth,clip,trim= 2cm 0cm 3cm 0cm]{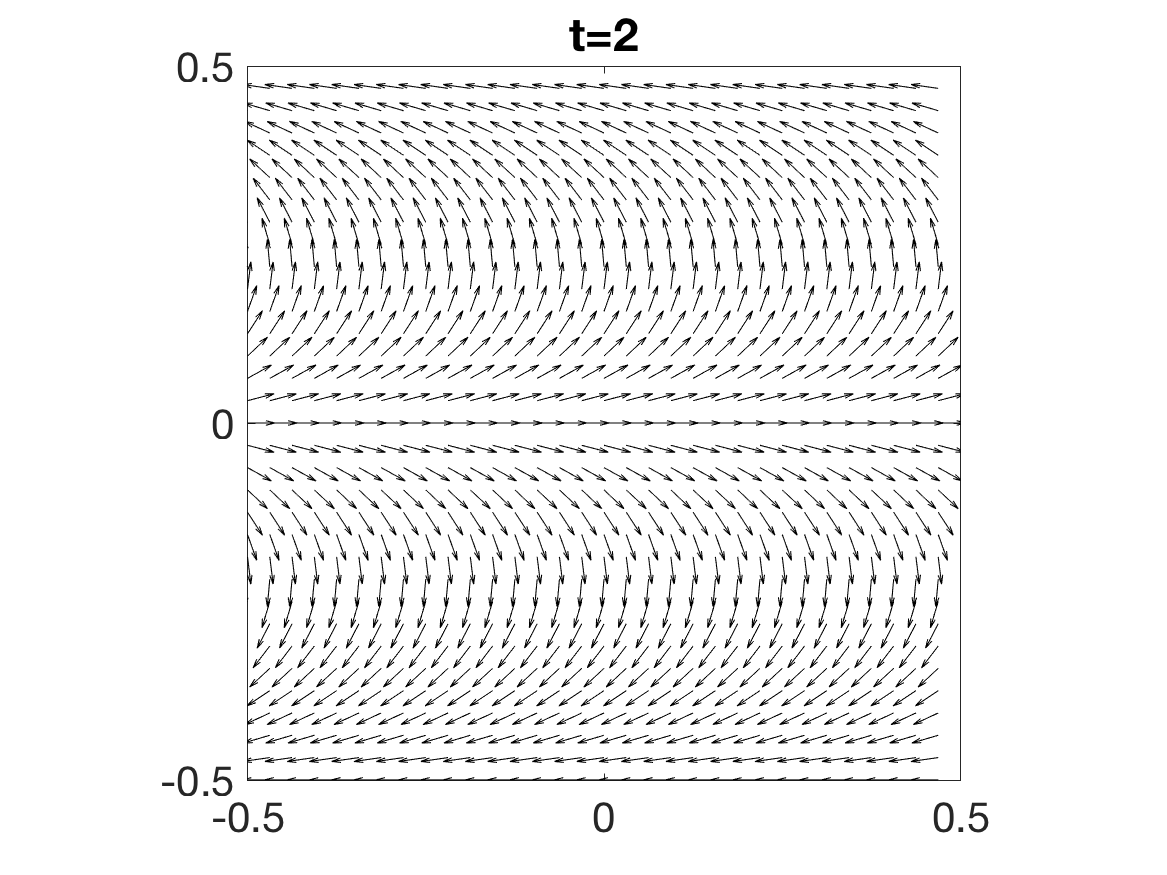}
\includegraphics[width = 0.24 \textwidth,clip,trim= 2cm 0cm 3cm 0cm]{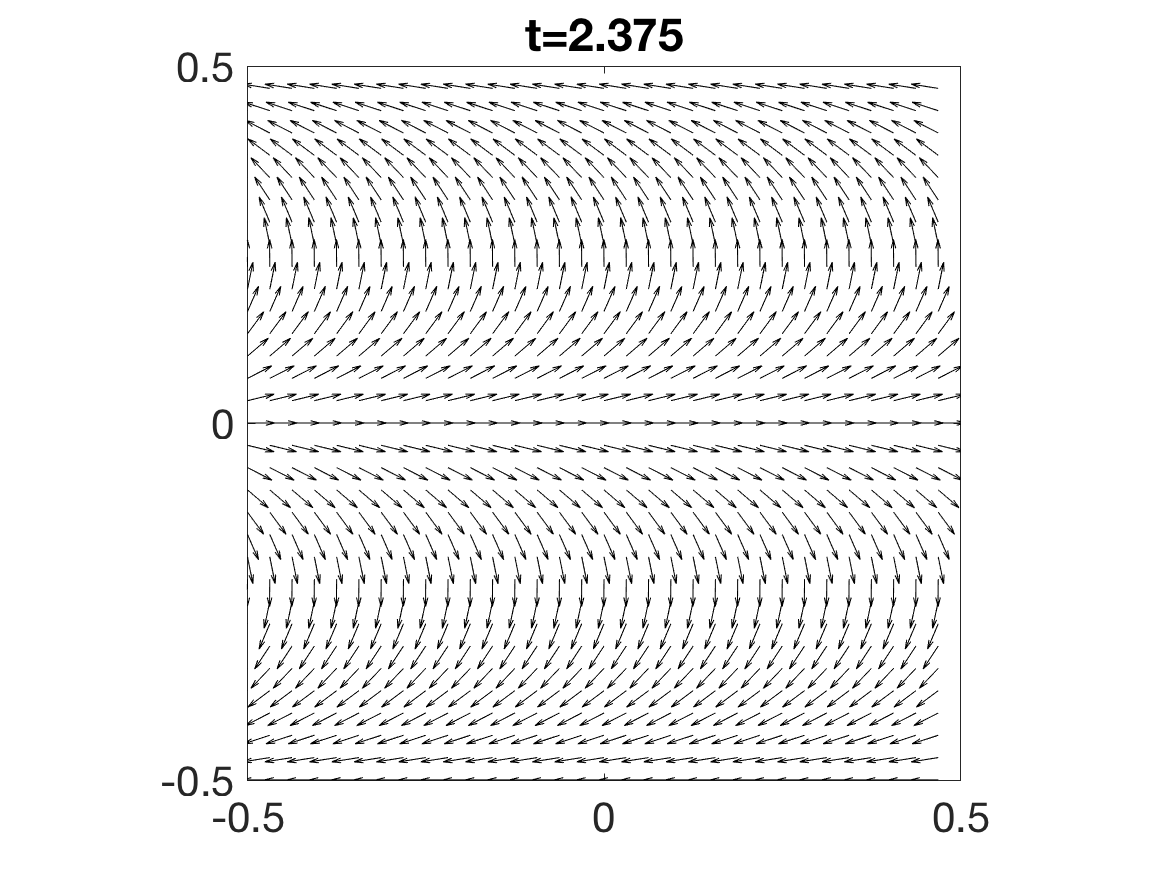}
\caption{Snapshots of the time evolution of an initial $SO_2$ matrix-valued field. The initial field is given in \eqref{eq:initialfield} with $\eta(x) = 2 \pi x_1+\frac{\pi}{2} \sin(2\pi x_1)$. See Section~\ref{sec:single}.} \label{fig:2}
\end{figure}

\medskip

To better understand the behavior in Figures~\ref{fig:1} and \ref{fig:2}, we recall the definition of the pair of indices of a matrix-valued field discussed in \cite{osting2017}. 
Let $v\colon \Omega \to \mathbb C$ be a complex-valued field with no zeros. 
Let $\gamma \colon [0,1] \rightarrow \Omega$ be a closed curve. 
We define the \emph{index of $\gamma$ with respect to $v$} to  be
\[\mathrm{ind}_v(\gamma) := \frac{1}{2\pi} \left[ \arg{v(\gamma(1))} - \arg{ v(\gamma(0))} \right].  \]
Clearly the index of $\gamma$ is an integer and varies continuously with deformations to $\gamma$, so it depends only on the homotopy class of $\gamma$. For a torus, we can parameterize the homotopy classes by the number of times the curve wraps around $\Omega$ in the $x_1$- and $x_2$-directions. Furthermore, if we let $[\gamma]_{m,n}$ denote the equivalence class of curves that wraps around $\Omega$ $m$ times in the $x_1$-direction and $n$ times in the $x_2$-direction, then it is not difficult to see that 
\[\mathrm{ind}_v([\gamma]_{m,n}) = \mathrm{ind}_v([\gamma]_{1,0})^m + \mathrm{ind}_v([\gamma]_{0,1})^n. \]
So we can characterize the index of any curve in terms of the indices of $[\gamma]_{1,0}$ and $[\gamma]_{0,1}$. For a given field $v$, we let 
$$
I = \left( \mathrm{ind}_v([\gamma]_{1,0}), \mathrm{ind}_v([\gamma]_{0,1}) \right) 
$$ 
be the \emph{index pair} corresponding to curves that wrap around $\Omega$ once in the $x_1$- and $x_2$-directions. For a matrix-valued field $A \colon \Omega \to SO_2$ or $A \colon \Omega \to SO_2^-$, we define the \emph{index pair}, $I$, to be the index pair for the first column of $A$. For example, for the harmonic orthogonal matrix fields in \eqref{e:HarmSol}, the index pair is $I = (n_1,n_2)$.

In Figure~\ref{fig:1}, the index pair for the initial condition is $(0,0)$ and the field evolves toward the harmonic orthogonal matrix field with index pair $(0,0)$, the uniform matrix field. 
In Figure~\ref{fig:2}, the index pair for the initial condition is $(1,0)$ and the field evolves toward the harmonic orthogonal matrix field with index pair $(1,0)$.  We observe and generally expect that the index pair is invariant under flow by the $O_n$ diffusion equation.

\subsubsection{Evolution of $O_n$-valued fields at the $O(t)$ time scale.}\label{sec:doublefast}
In this section, we check the motion law we derived in Section~\ref{sec:part2fast}. That is, at the $O(t)$ time scale, if there is a line defect initially, the motion of the interface is driven by the curvature at each point. Note that at this time scale, we don't see the effect from the matrix-valued field on the motion law of the interface. So we perform two experiments where the initial condition has the same line defect, but different initial matrix-valued fields. 
Specifically,  we choose the following initial condition for different choices of $\eta \colon \Omega \to \mathbb R$, 
\begin{align}\label{eq:initialslow1}
A(r,\theta)  = 
\begin{cases}
\begin{bmatrix}
\cos \eta & -\sin \eta \\
\sin \eta & \cos \eta
\end{bmatrix},   & \text{if} \  r < 0.15+0.03 \sin(12 \theta), \\
\begin{bmatrix}
\cos \eta & \sin \eta \\
\sin \eta & -\cos \eta
\end{bmatrix}, & \text{otherwise}
\end{cases}
\end{align}
where $(r, \theta)$ is the corresponding polar coordinate of $x = (x_1,x_2) \in \Omega$. 

In all subsequent figures, the domain is colored by the sign of the determinant of the matrix. For a matrix field $A \in H^1\left(\Omega; O_n \right)$, we use the convention 
\begin{center}
\begin{tabular}{l c l c l }
$x$ is yellow &  $\iff$  &  $\mathrm{det}(A(x)) =1$ & $\iff$ & $A(x) \in SO_n$ \\
$x$ is green  & $\iff$  & $\mathrm{det}(A(x)) =-1$ & $\iff$ & $A(x) \in SO_n^-$.
\end{tabular}
\end{center}

In Figure~\ref{fig:3},  we display several snapshots of the time evolution for two different initial conditions. 
In the first column of Figure~\ref{fig:3},  the initial field is chosen as in \eqref{eq:initialslow1} with $\eta(x) = \frac{\pi}{2} \sin(2\pi x_1)$ and, in the second column, the initial field has $\eta(x)  = 2\pi x_1$. 
Hence the pair of indices of the initial field in the first column is $(0,0)$ and 
the pair of indices of the initial field in the second column is $(1,0)$. 
In both columns of Figure~\ref{fig:3}, we observe that the region where $A \in SO_n$ shrinks with the interface becoming a circle before vanishing. 
We observe that the time dynamics of the line defect for the two different initial conditions are very close. 
This is consistent with our analytical results in Section~\ref{sec:part2fast}, that is, at the $O(t)$ time scale, the motion law is the leading order of the curvature of the line defect, which is independent of the matrix-valued field.  
For the evolution in the left column of  Figure~\ref{fig:3}, the field continues to evolve toward the uniform solution for longer times than shown in the Figure. 

\begin{figure}[ht]
\begin{tabular}{c|c}
\includegraphics[width = 0.24 \textwidth,clip,trim= 5cm 1cm 5cm 0cm]{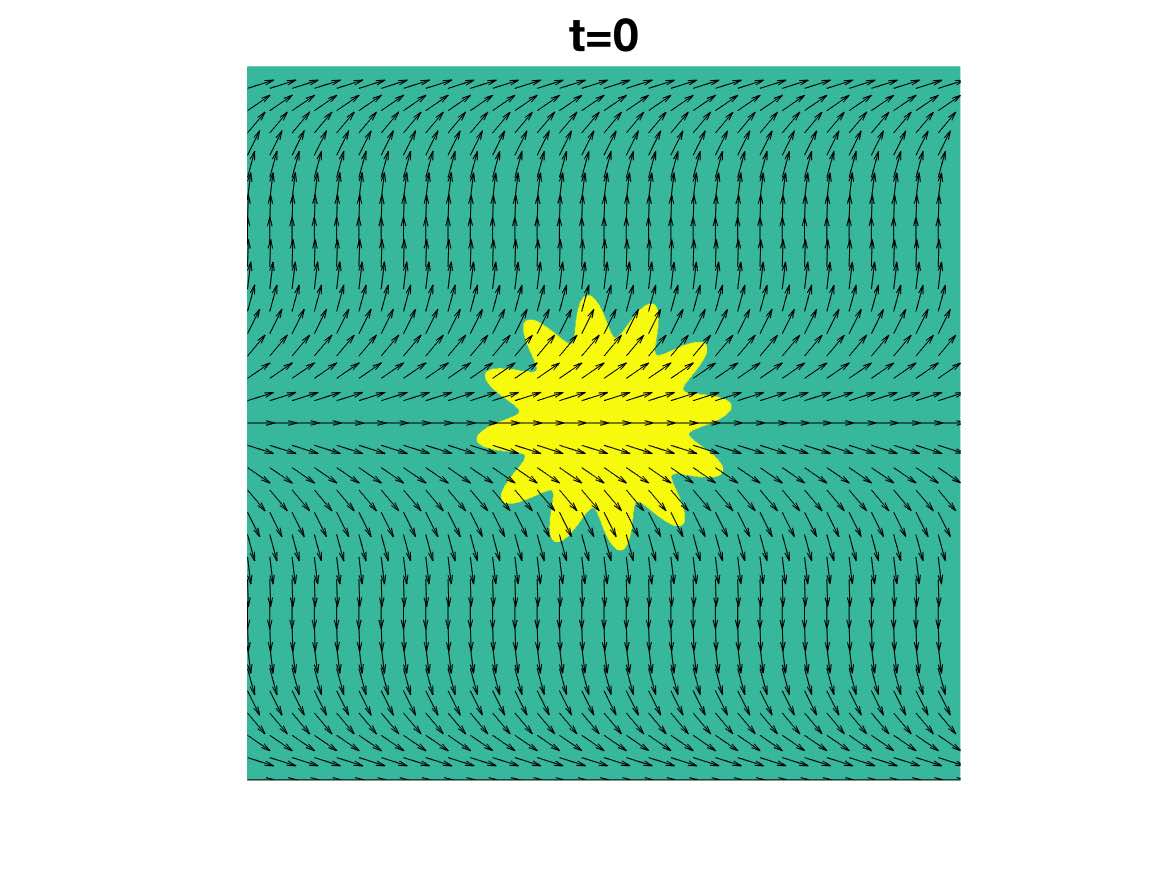} 
\includegraphics[width = 0.24 \textwidth,clip,trim= 5cm 1cm 5cm 0cm]{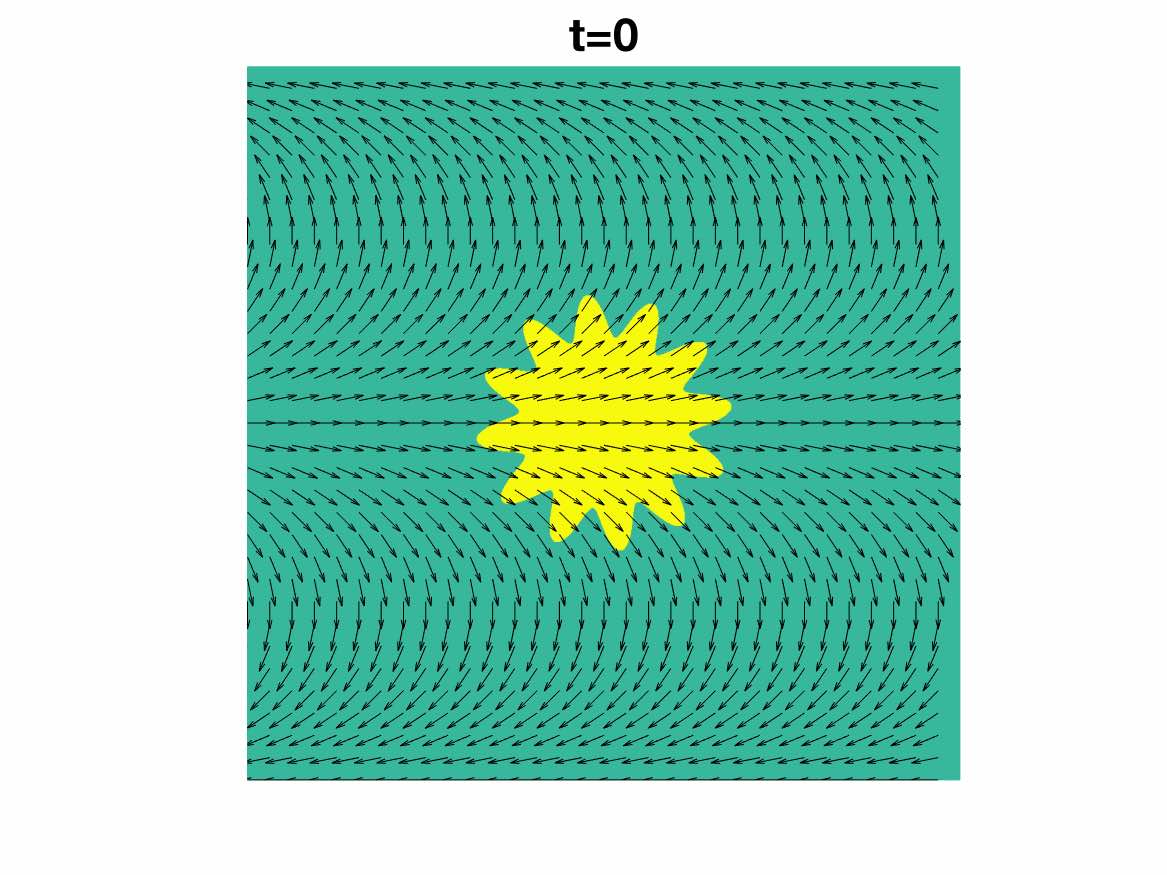} &
\includegraphics[width = 0.24 \textwidth,clip,trim= 5cm 1cm 5cm 0cm]{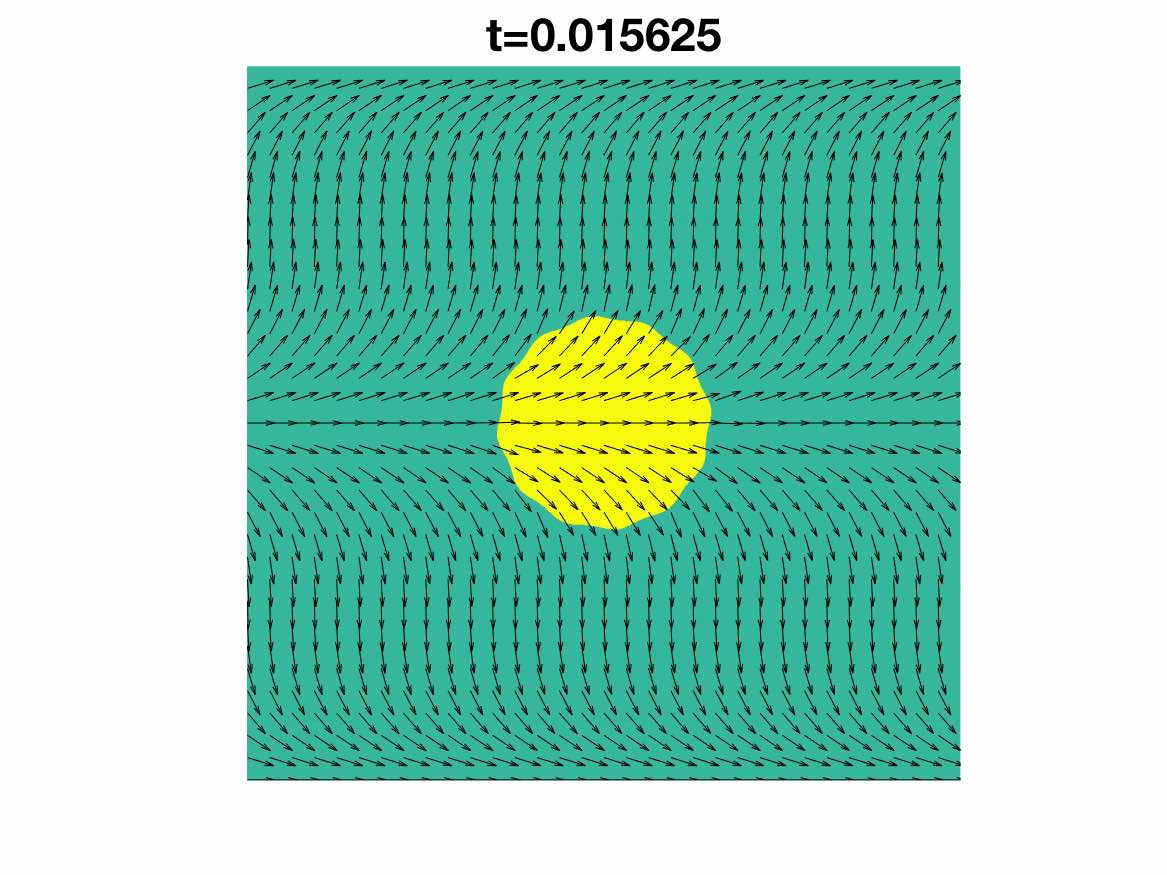}
\includegraphics[width = 0.24 \textwidth,clip,trim= 5cm 1cm 5cm 0cm]{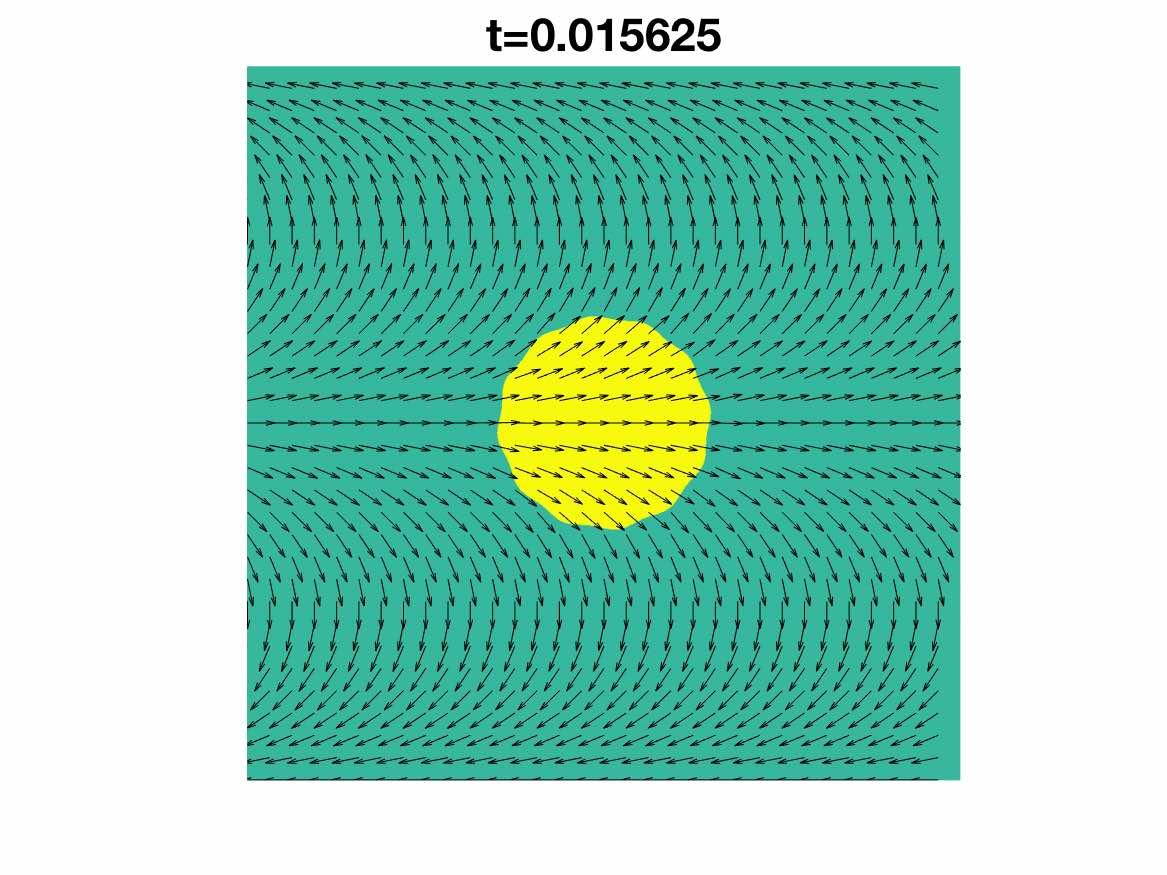} \\ 
\includegraphics[width = 0.24 \textwidth,clip,trim= 5cm 1cm 5cm 0cm]{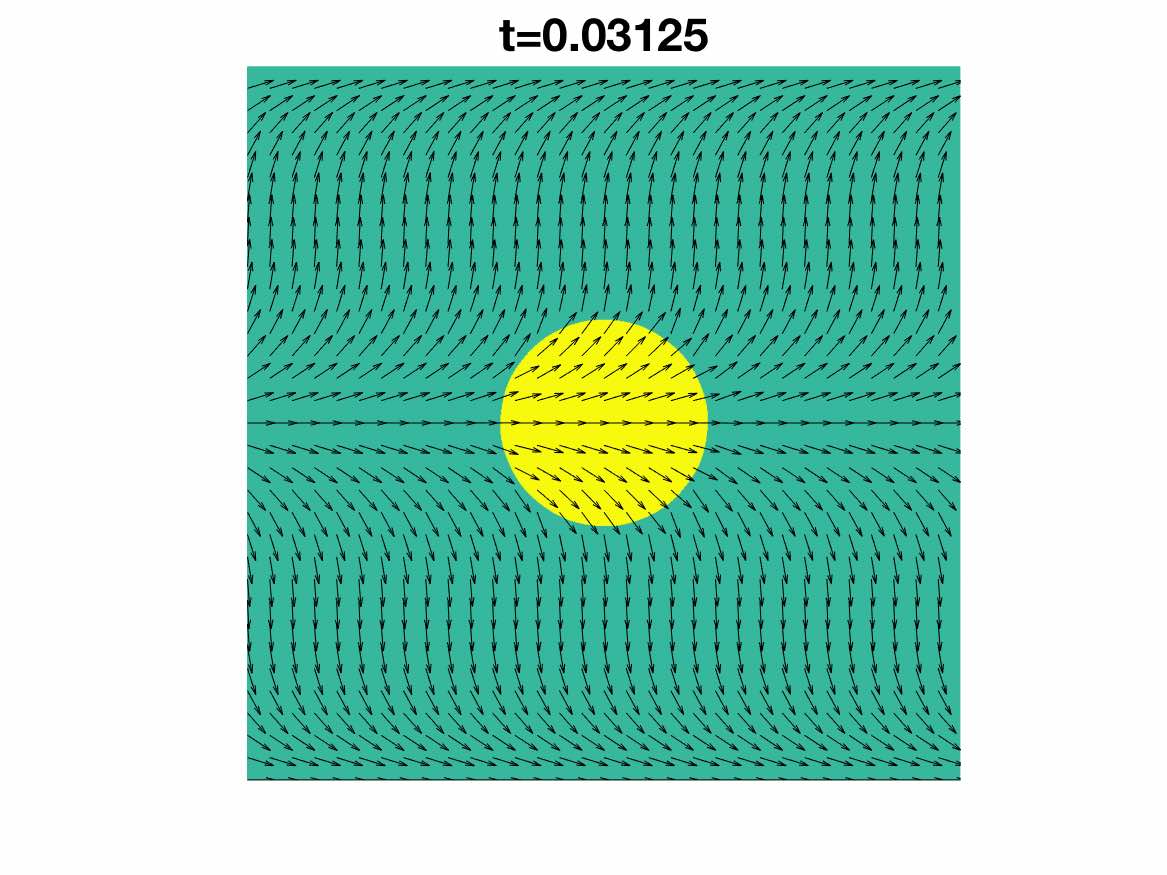} 
\includegraphics[width = 0.24 \textwidth,clip,trim= 5cm 1cm 5cm 0cm]{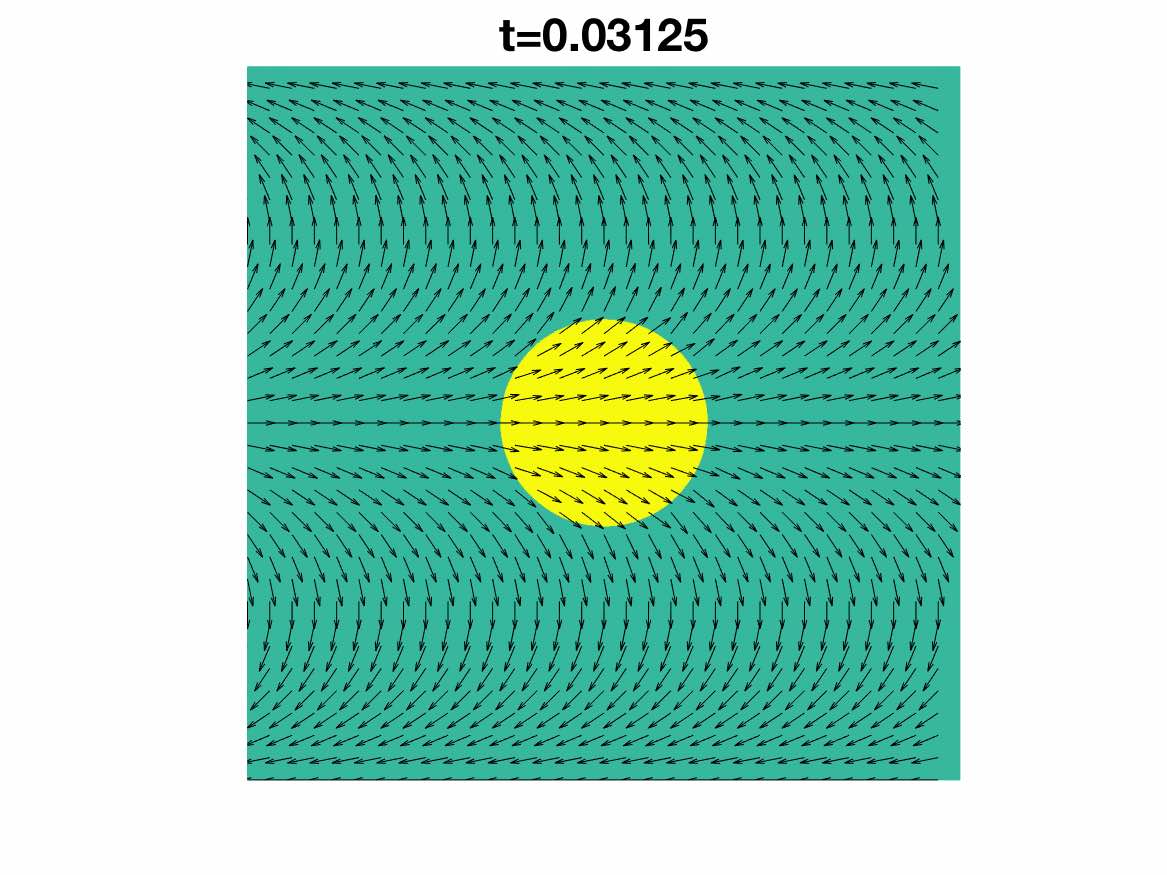} &
\includegraphics[width = 0.24 \textwidth,clip,trim= 5cm 1cm 5cm 0cm]{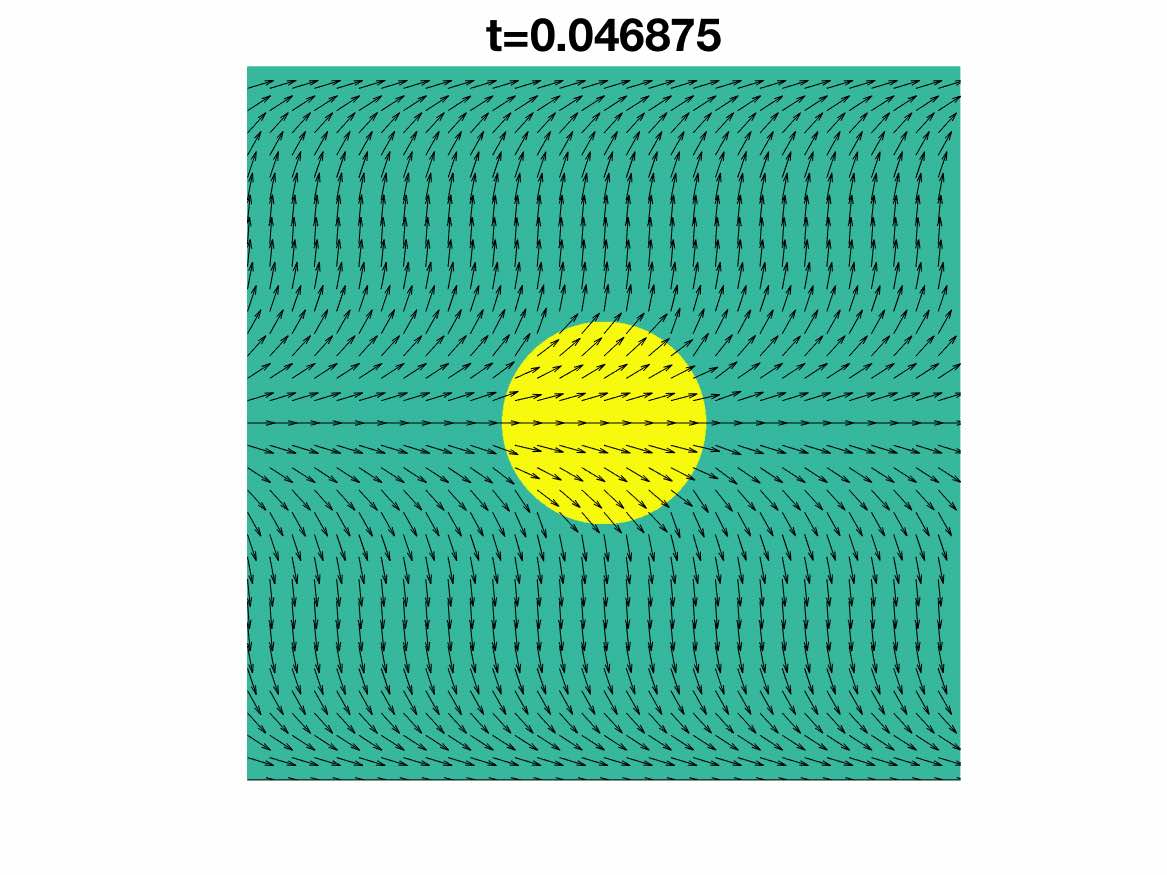}
\includegraphics[width = 0.24 \textwidth,clip,trim= 5cm 1cm 5cm 0cm]{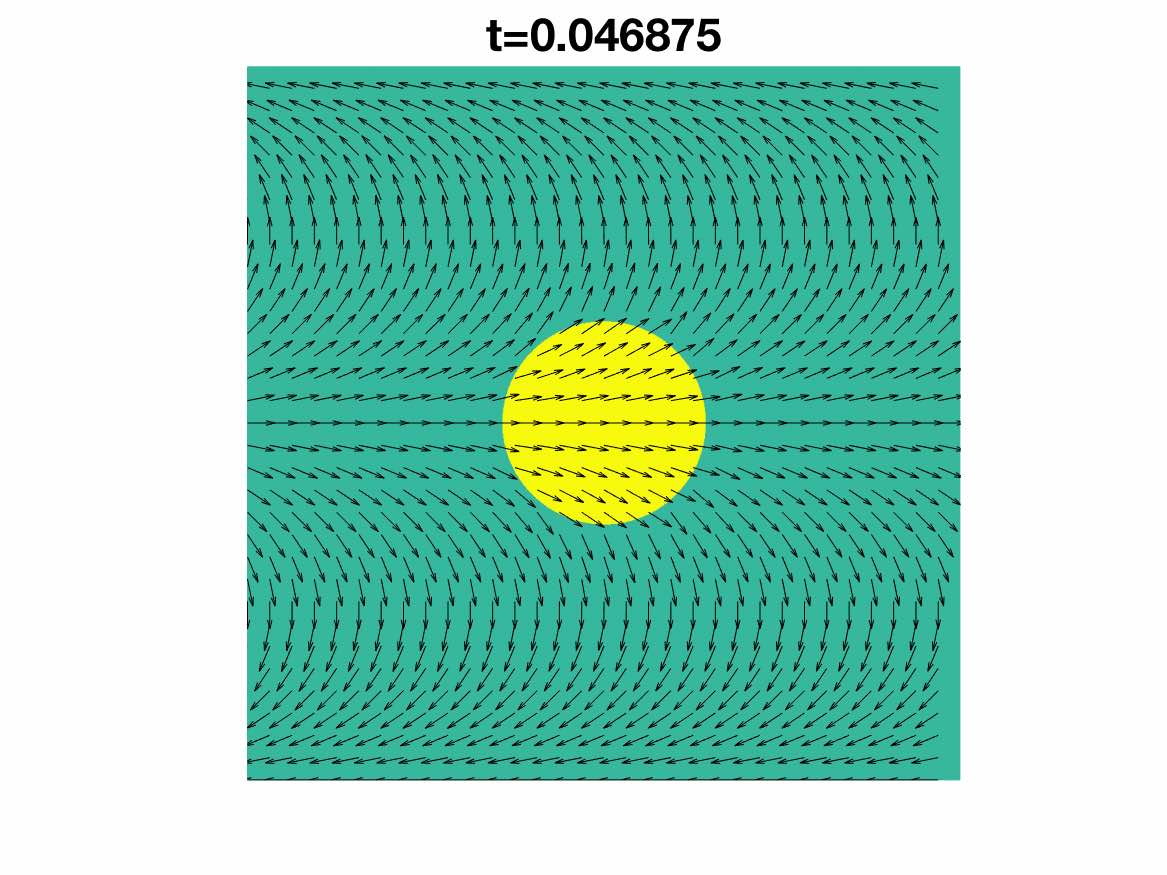} \\
\includegraphics[width = 0.24 \textwidth,clip,trim= 5cm 1cm 5cm 0cm]{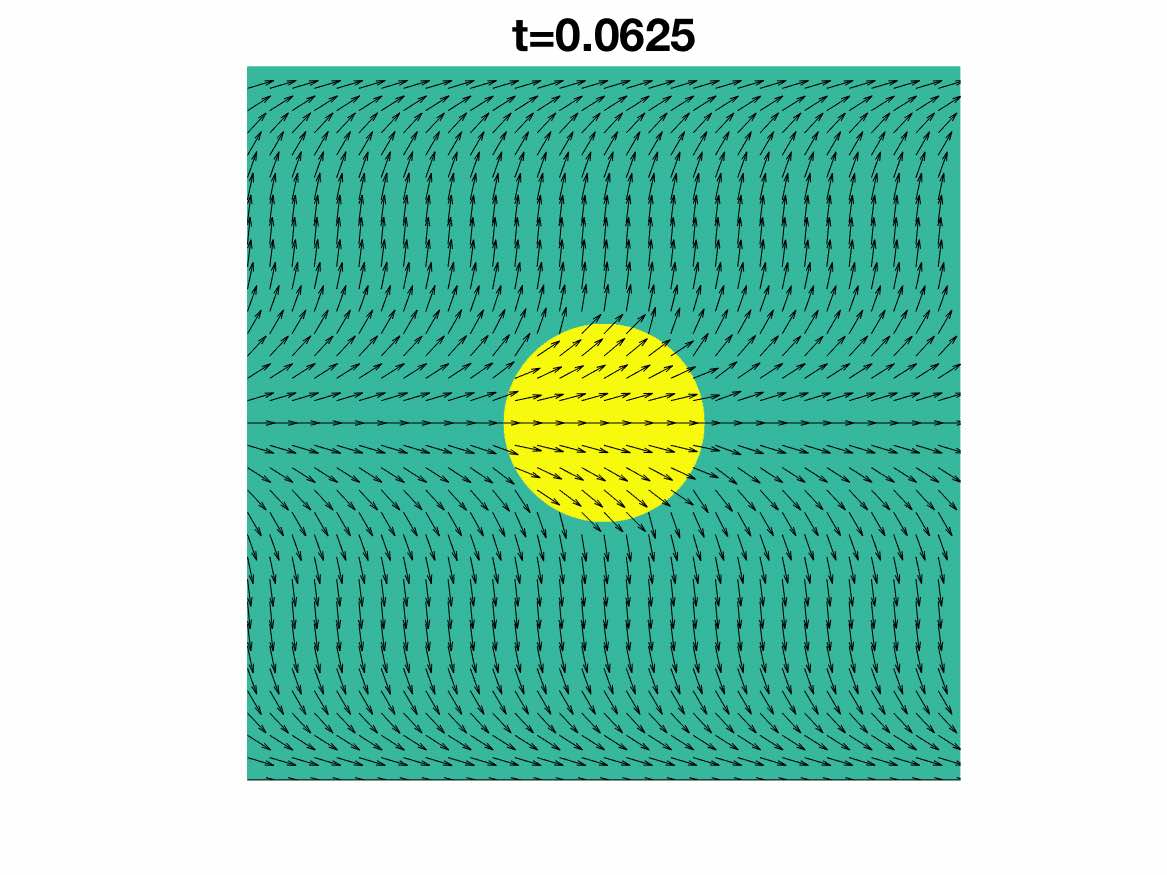} 
\includegraphics[width = 0.24 \textwidth,clip,trim= 5cm 1cm 5cm 0cm]{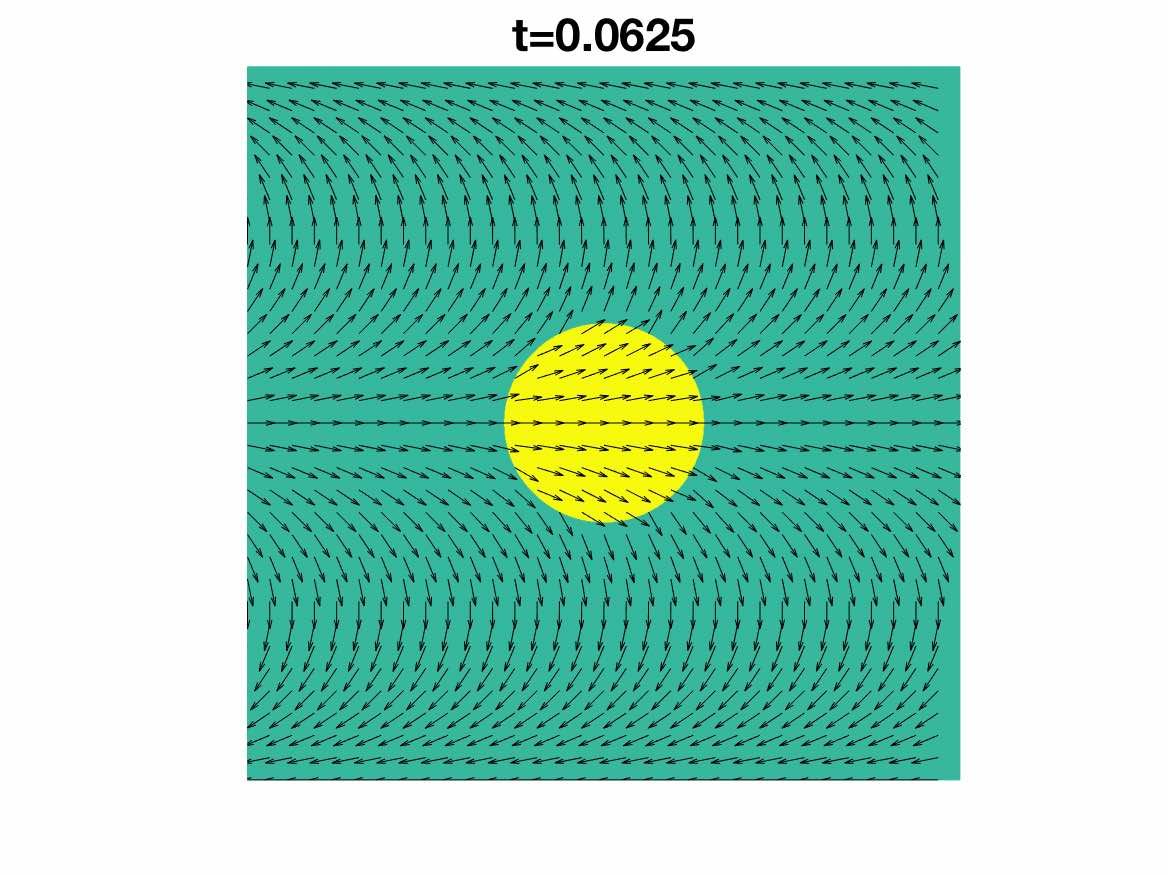} &
\includegraphics[width = 0.24 \textwidth,clip,trim= 5cm 1cm 5cm 0cm]{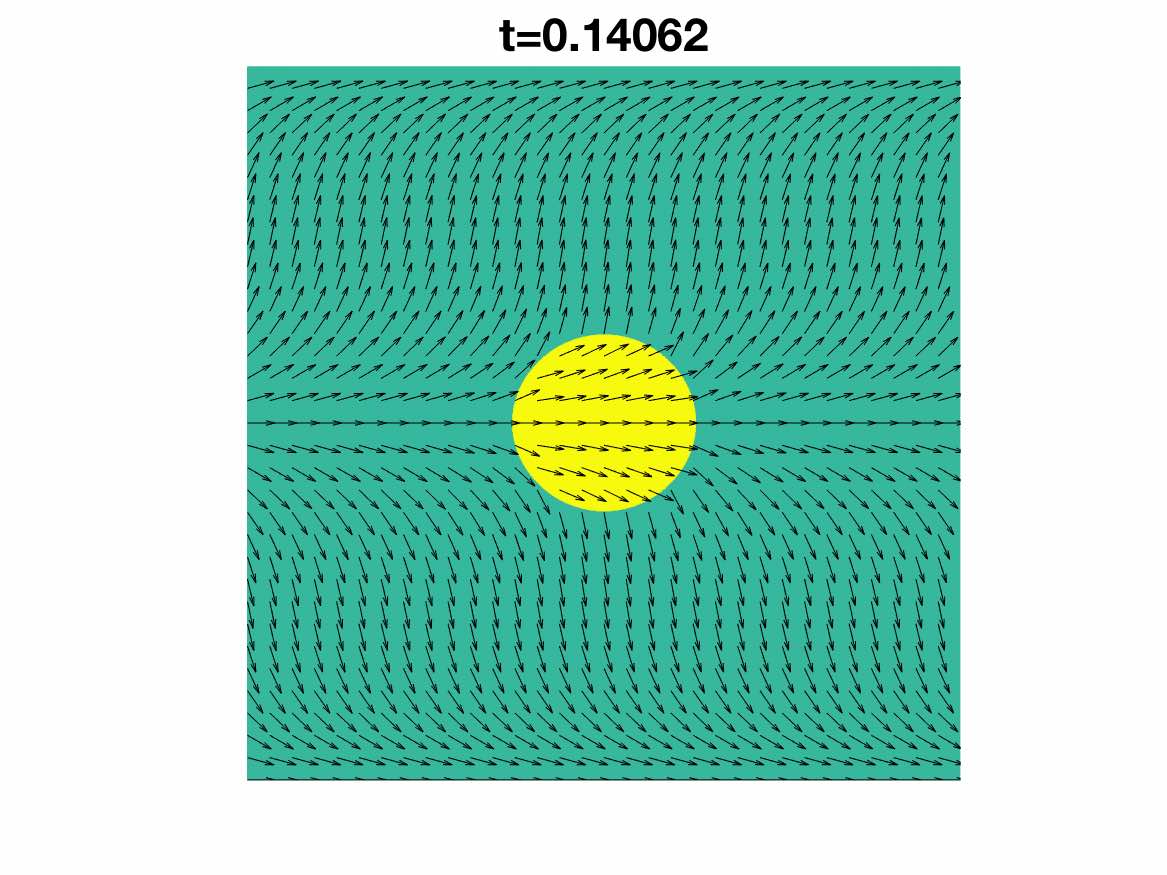}
\includegraphics[width = 0.24 \textwidth,clip,trim= 5cm 1cm 5cm 0cm]{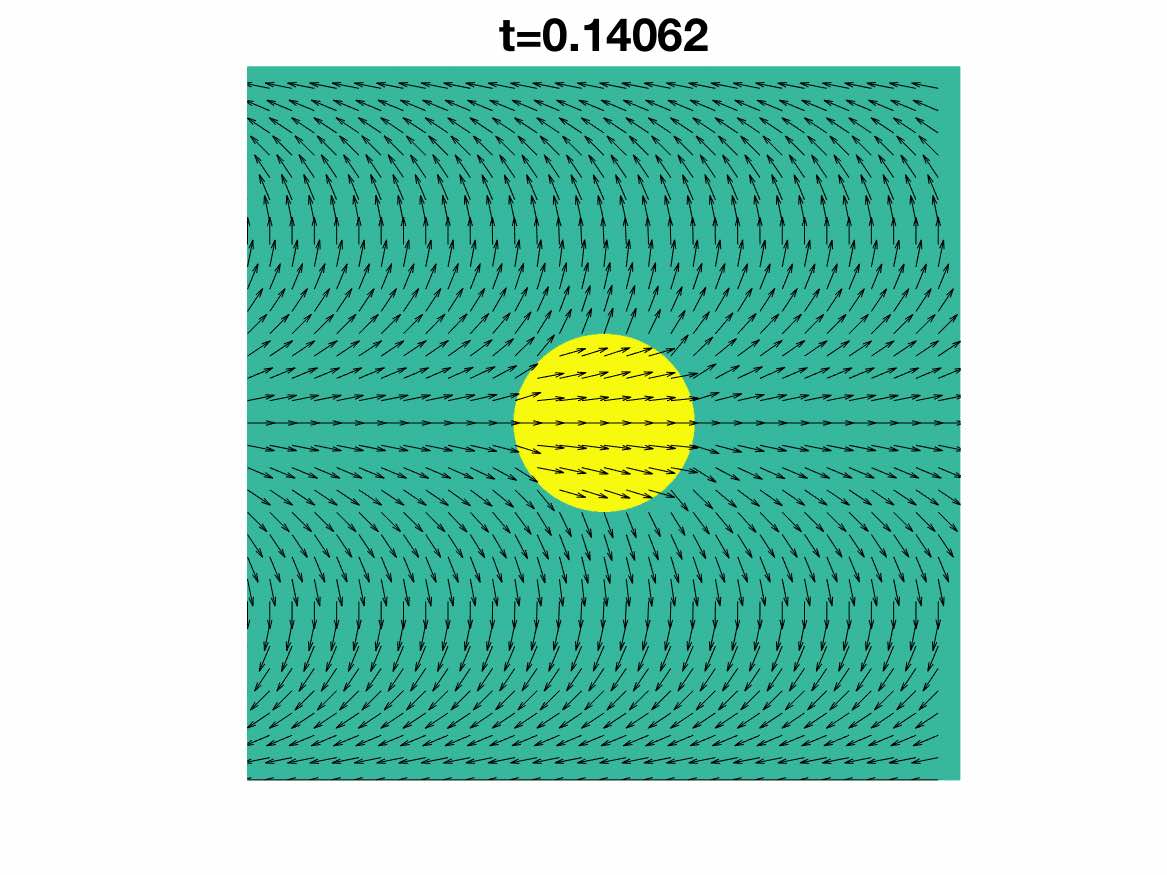} \\
\includegraphics[width = 0.24 \textwidth,clip,trim= 5cm 1cm 5cm 0cm]{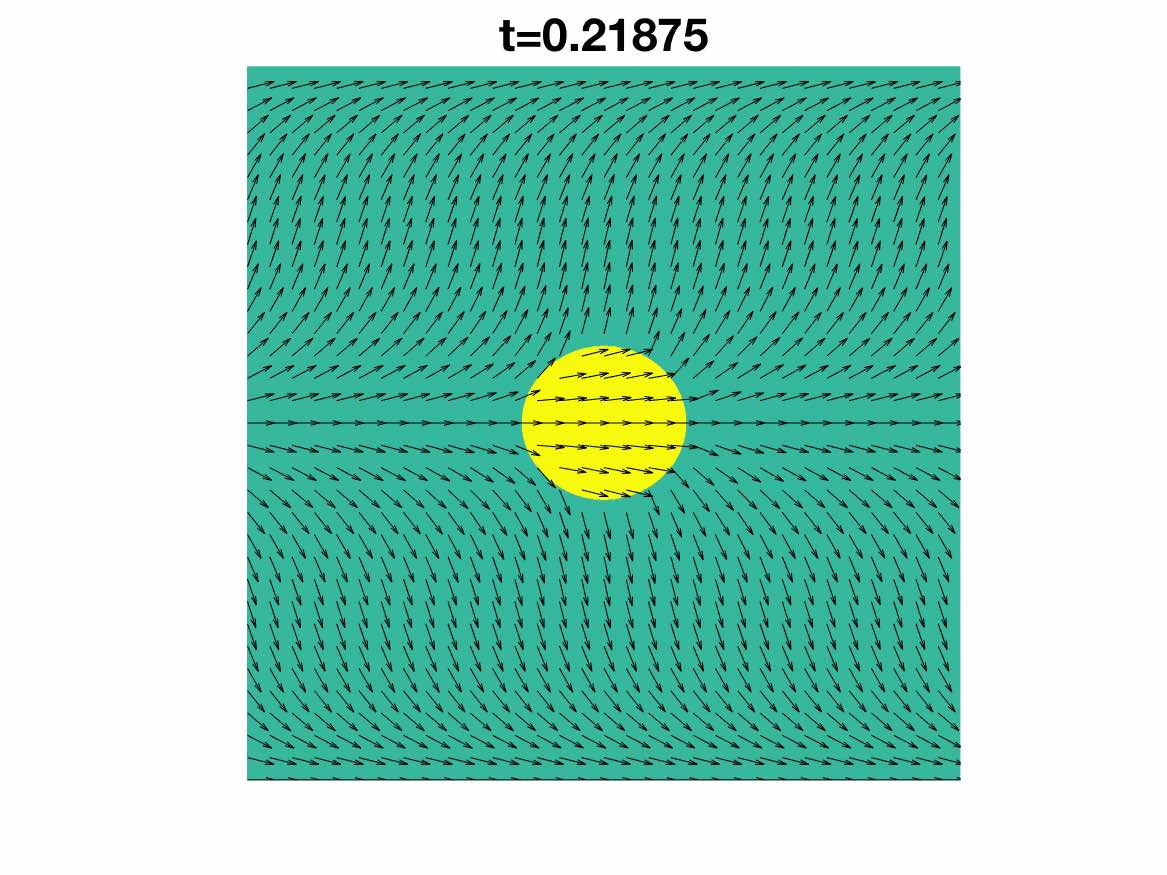} 
\includegraphics[width = 0.24 \textwidth,clip,trim= 5cm 1cm 5cm 0cm]{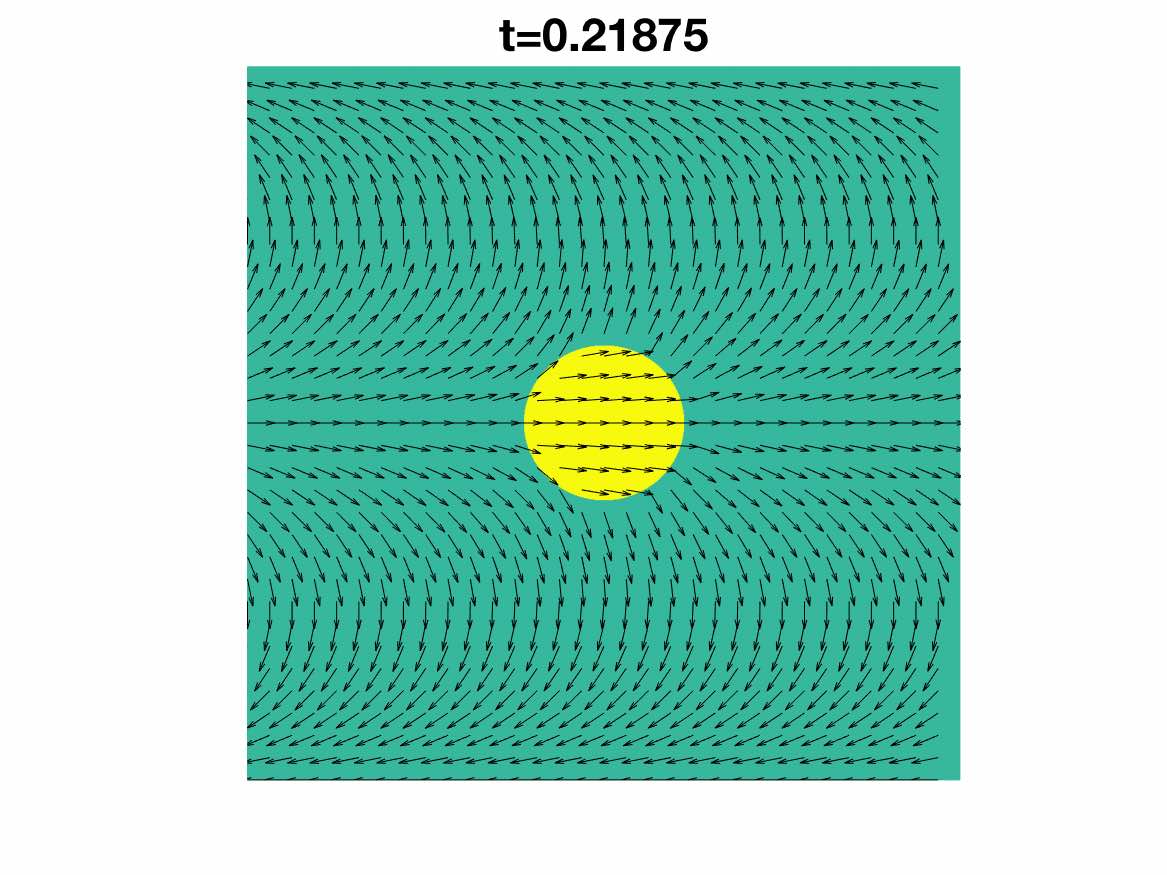} &
\includegraphics[width = 0.24 \textwidth,clip,trim= 5cm 1cm 5cm 0cm]{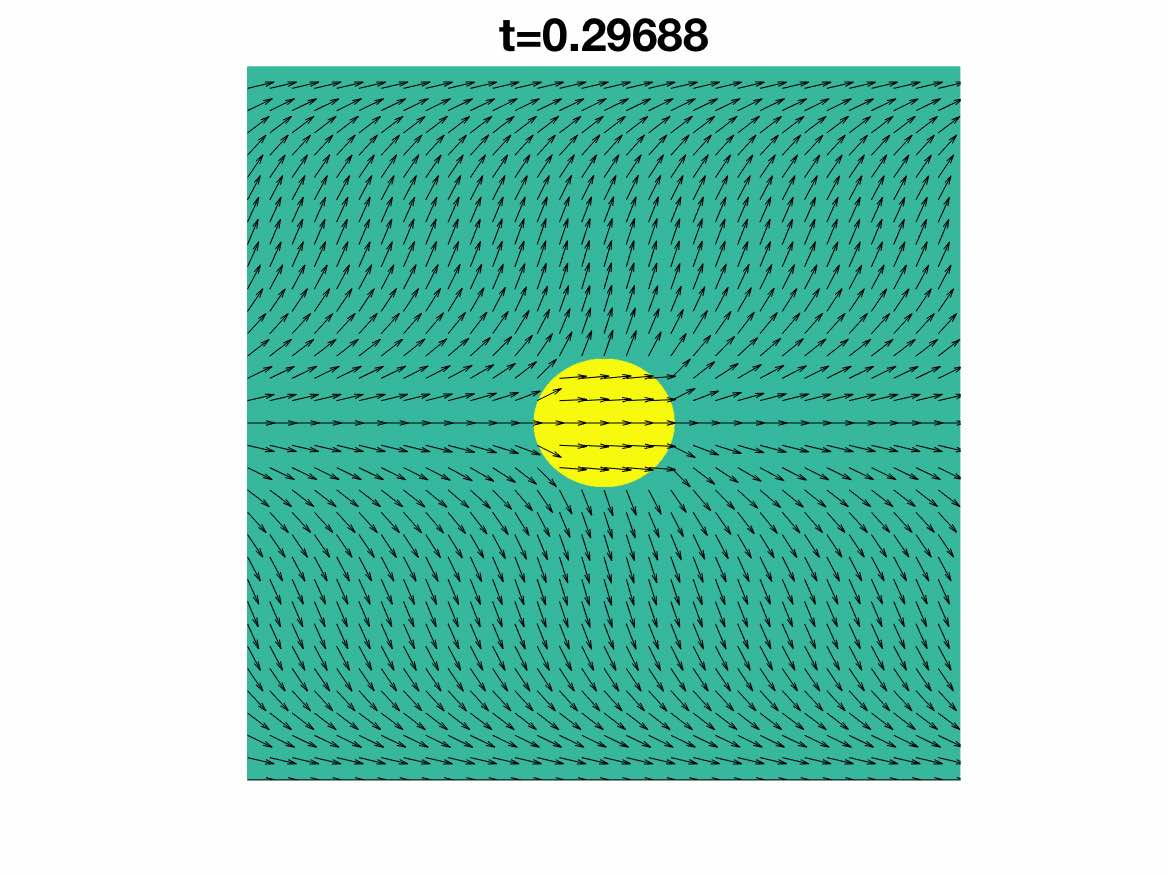}
\includegraphics[width = 0.24 \textwidth,clip,trim= 5cm 1cm 5cm 0cm]{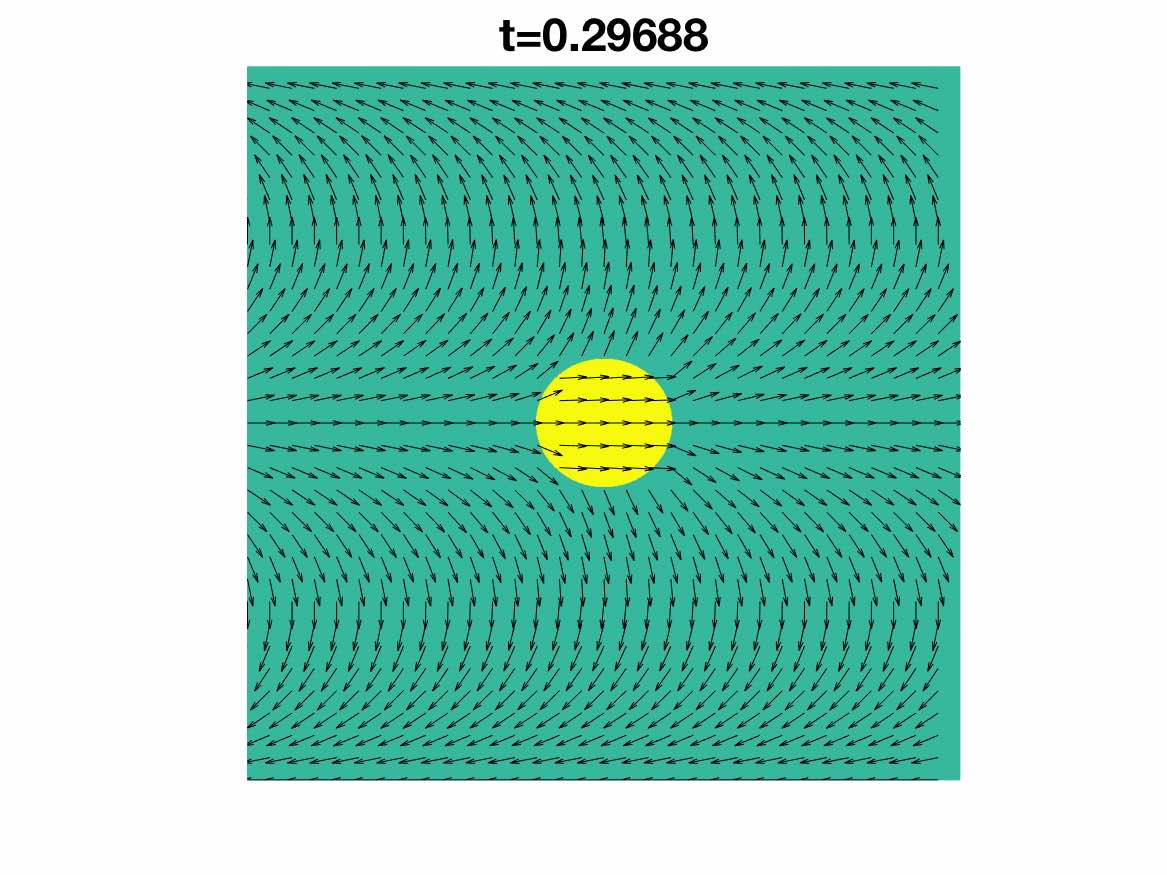} \\
\includegraphics[width = 0.24 \textwidth,clip,trim= 5cm 1cm 5cm 0cm]{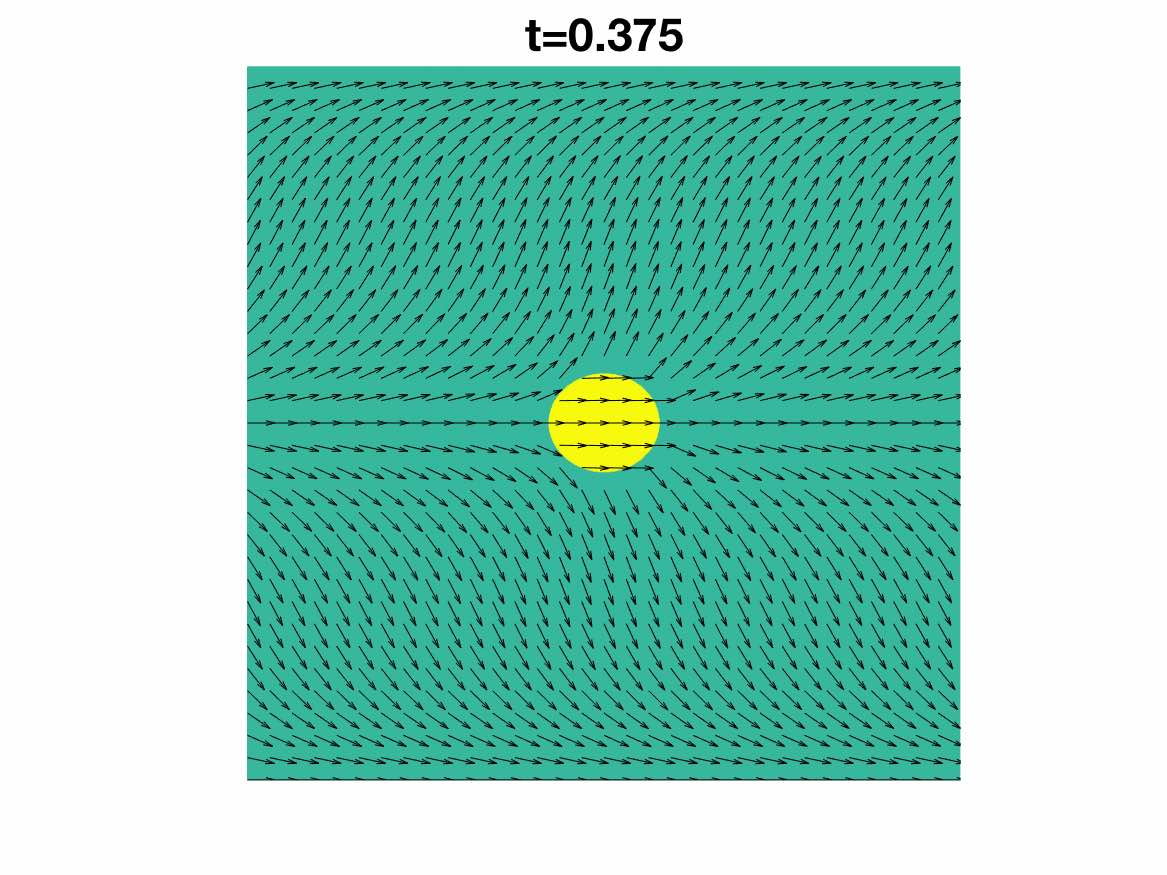} 
\includegraphics[width = 0.24 \textwidth,clip,trim= 5cm 1cm 5cm 0cm]{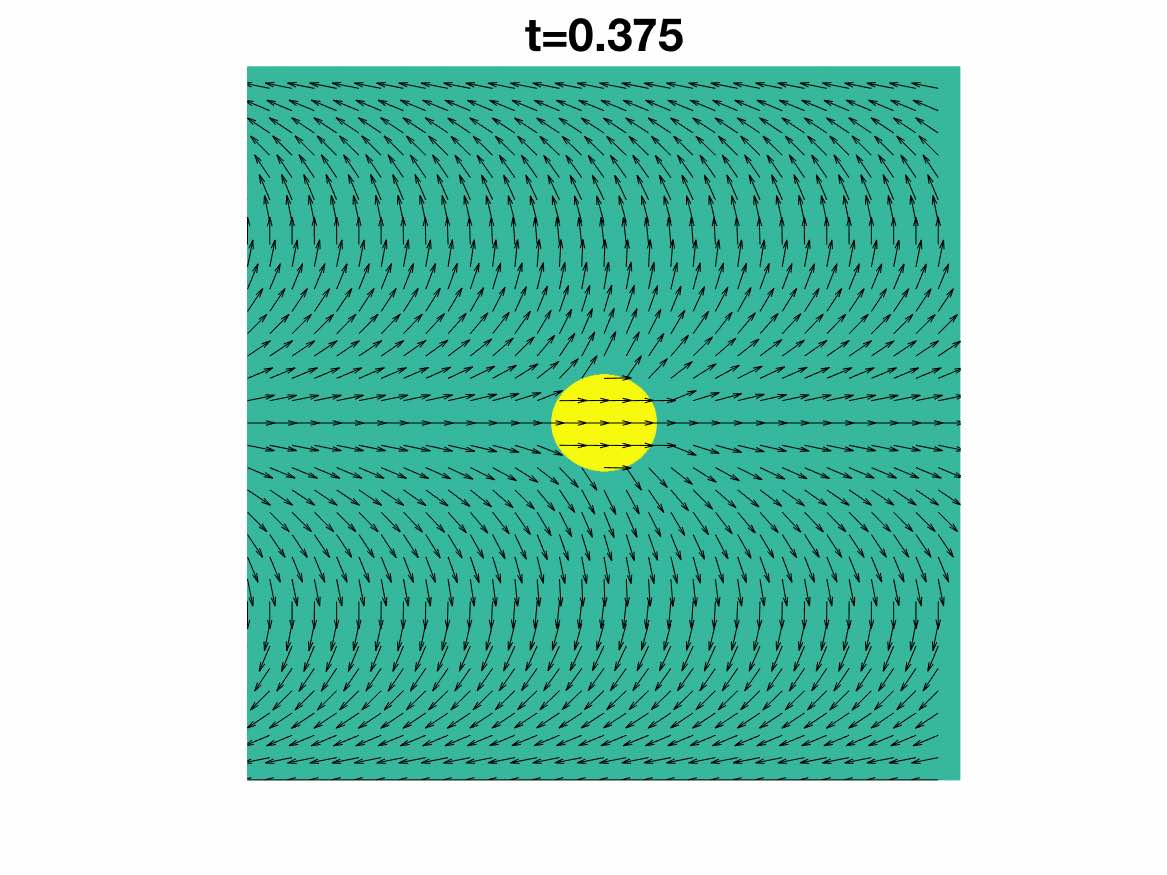} &
\includegraphics[width = 0.24 \textwidth,clip,trim= 5cm 1cm 5cm 0cm]{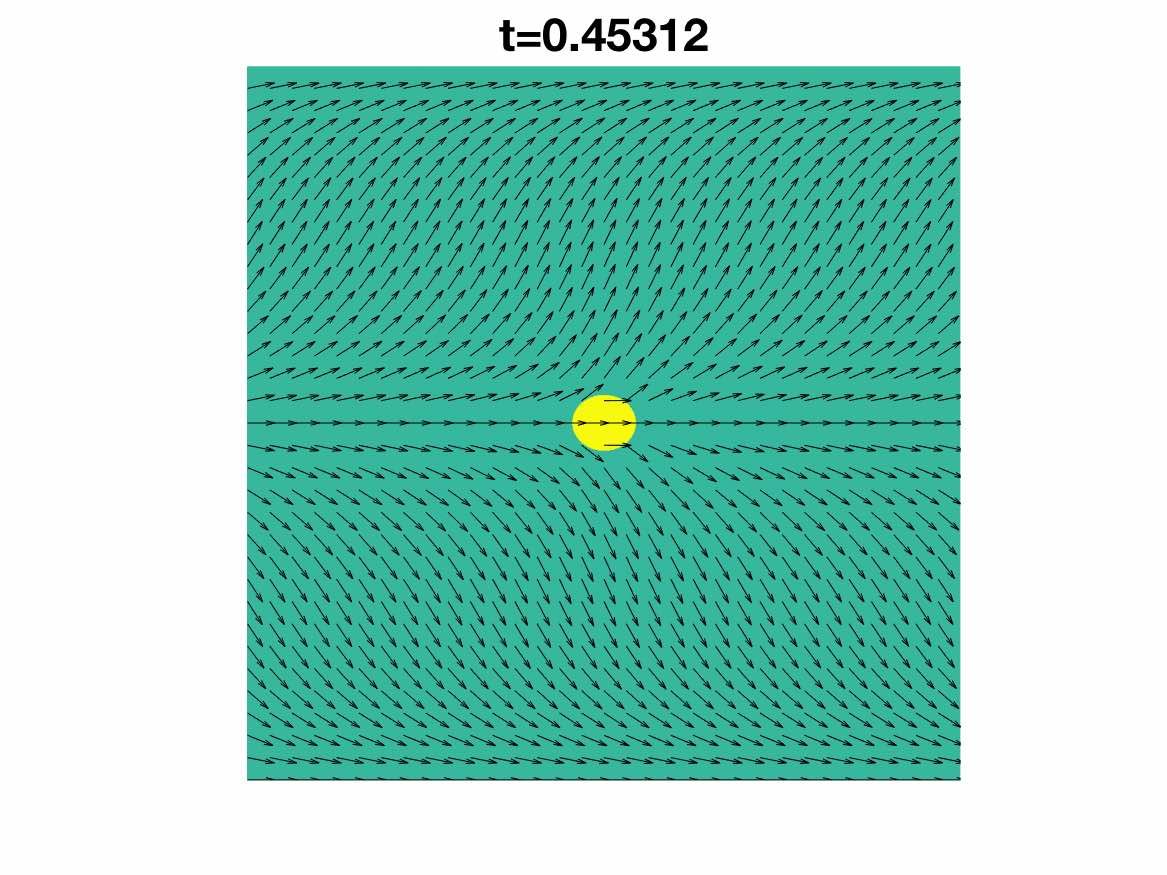}
\includegraphics[width = 0.24 \textwidth,clip,trim= 5cm 1cm 5cm 0cm]{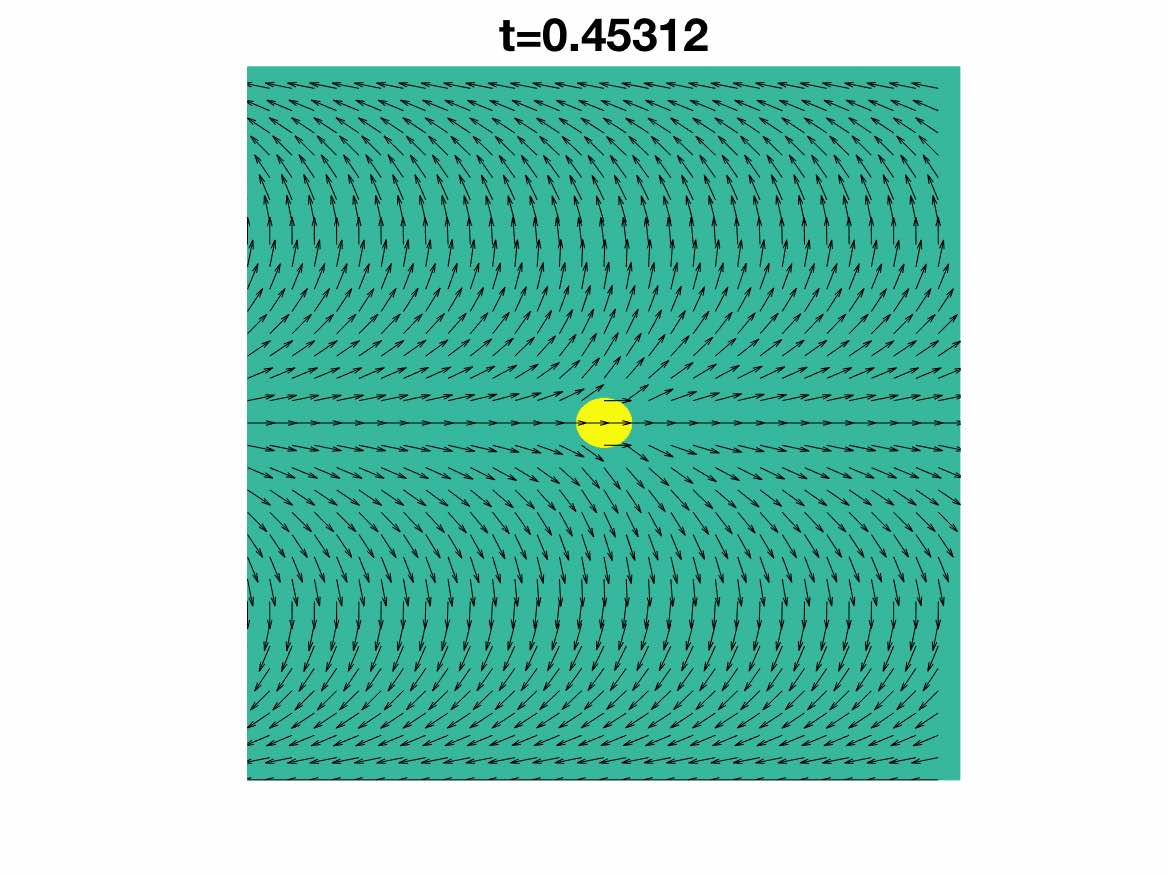} \\
\end{tabular}
\caption{Snapshots of the time evolution of an initial $O_2$ matrix-valued field. The initial line defect is given by $r = 0.15+0.03\sin(12 \theta)$ where $(r,\theta)$ is the corresponding polar coordinate of $(x_1,x_2)$.  The initial field is given in \eqref{eq:initialslow1} with $\eta(x) = \frac{\pi}{2} \sin(2\pi x_1)$ in the first column and $\eta(x) =  2\pi x_1$ in the second column. See Section~\ref{sec:doublefast}.  } \label{fig:3}
\end{figure}

\subsubsection{Evolution of $O_n$-valued fields at the $O(\varepsilon t)$ time scale.}\label{sec:doubleslow} 
In this section, we check the motion law we derived in Section~\ref{sec:part2slow} at the $O(\varepsilon t)$ time scale. 
Note that, at the $O(\varepsilon t)$ time scale, we have the leading order of the curvature of the line defect satisfies $\kappa_0 = 0$ and we have $\Delta \eta = 0$. 
Hence, we perform several experiments where the initial condition has two straight parallel line defects.
Specifically, we choose the following initial condition 
\begin{align}\label{eq:initialslow2}
A(x_1,x_2)  = 
\begin{cases}
\begin{bmatrix}
\cos \eta_1 & -\sin \eta_1 \\
\sin \eta_1 & \cos \eta_1
\end{bmatrix},   & \text{if} \  |x_2|>0.25, \\
\begin{bmatrix}
\cos \eta_2 & \sin \eta_2 \\
\sin \eta_2 & -\cos \eta_2
\end{bmatrix}, & \text{otherwise},
\end{cases}
\end{align}
for different choices of $\eta_1, \eta_2\colon \Omega \rightarrow \mathbb{R}$ satisfying $\Delta \eta_1 = 0$ and $\Delta \eta_2 = 0$.

We first choose $\eta_1$ and $\eta_2$ so that $\eta_{1s}^2 = \eta_{2s}^2$; the parallel line defects are stationary according to our analytical results in \eqref{eq:motionlawslow} in Section~\ref{sec:part2slow}. 
In Figure~\ref{fig:4}, from the left to the right, we set 
\begin{align*}
& \eta_1(x_1,x_2)= \eta_2(x_1,x_2)= 1 \\
& \eta_1(x_1,x_2)= \eta_2(x_1,x_2)= 2\pi x_1 \\
& \eta_1(x_1,x_2)= \eta_2(x_1,x_2)= 4\pi x_1 \\
& \eta_1(x_1,x_2) = 2\pi x_1,  \quad \eta_2(x_1,x_2) = -2\pi x_1.
\end{align*}
Indeed, in Figure~\ref{fig:4}, all four columns show that the parallel line defects are stationary. 

\begin{figure}[ht]
\includegraphics[width = 0.24 \textwidth,clip,trim= 2cm 1cm 3cm 0cm]{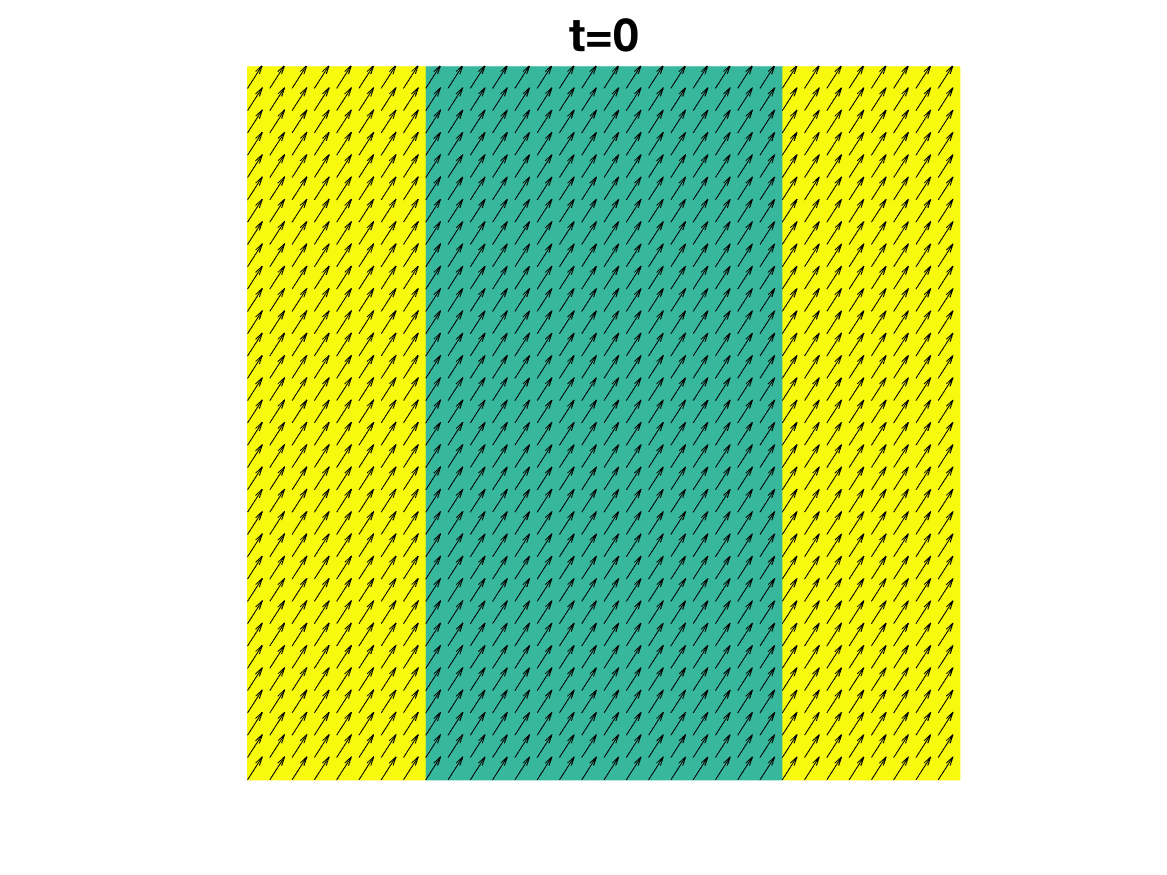} 
\includegraphics[width = 0.24 \textwidth,clip,trim= 2cm 1cm 3cm 0cm]{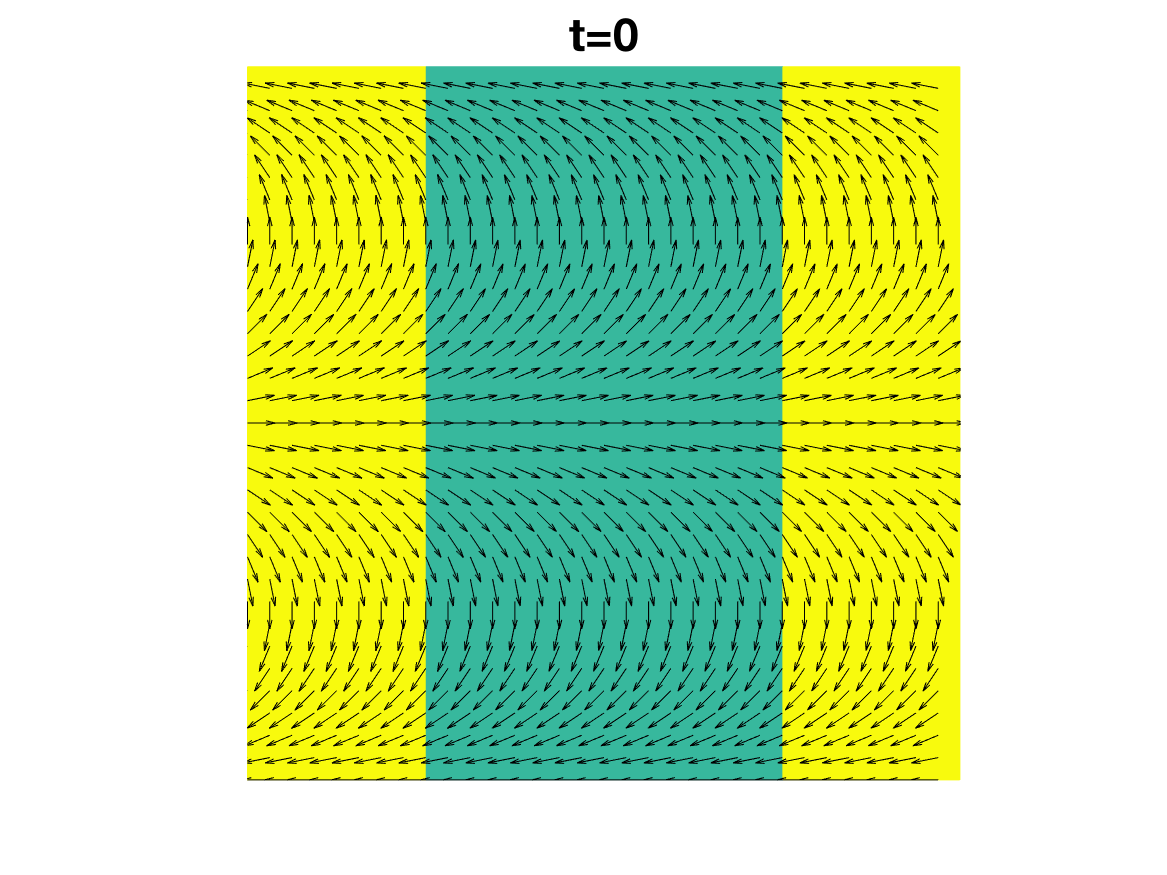} 
\includegraphics[width = 0.24 \textwidth,clip,trim= 2cm 1cm 3cm 0cm]{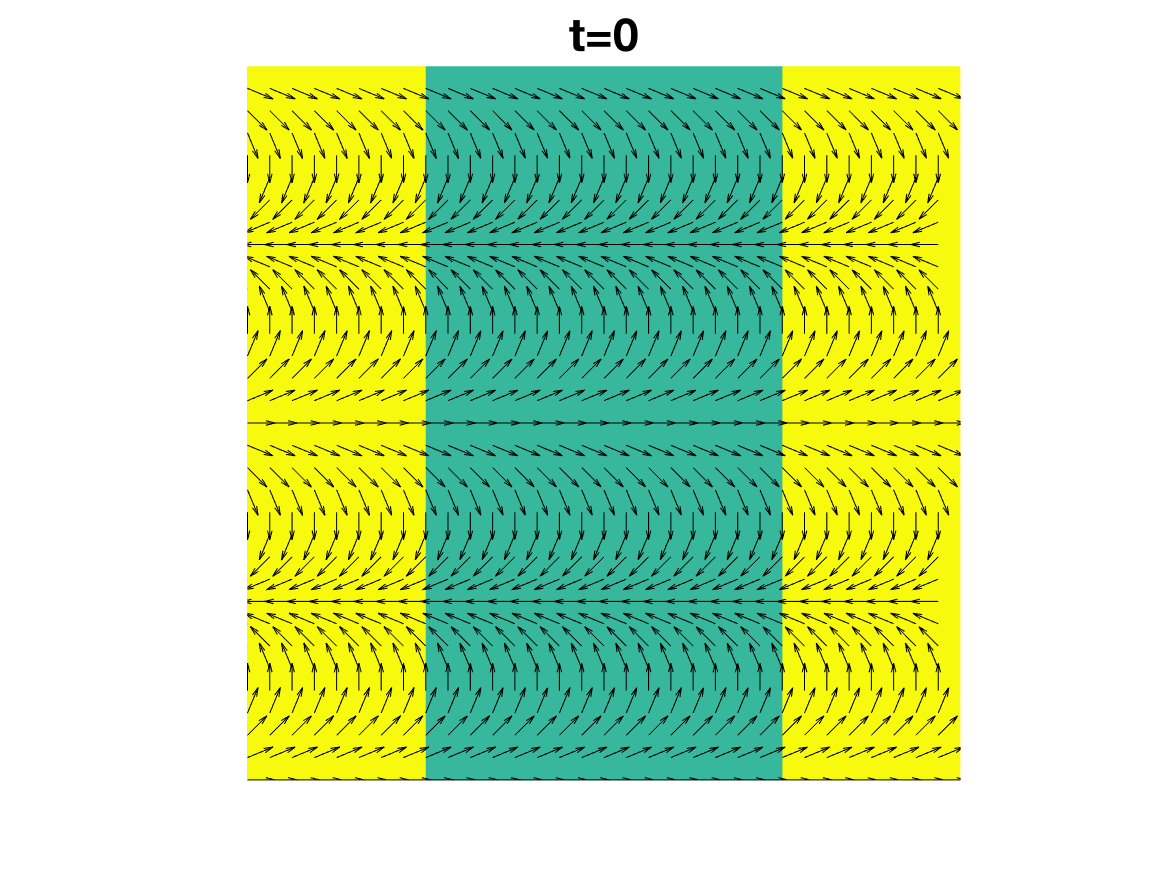}
\includegraphics[width = 0.24 \textwidth,clip,trim= 2cm 1cm 3cm 0cm]{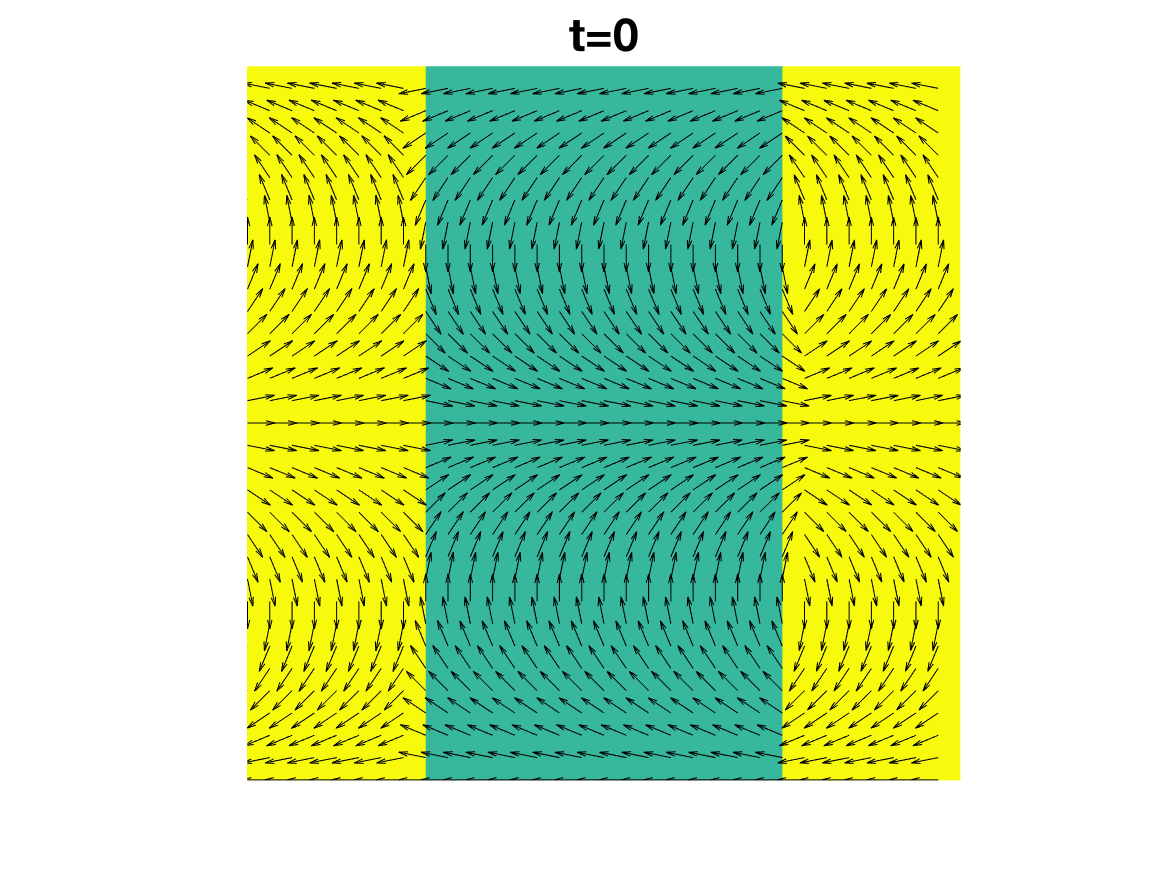}\\
\includegraphics[width = 0.24 \textwidth,clip,trim= 2cm 1cm 3cm 0cm]{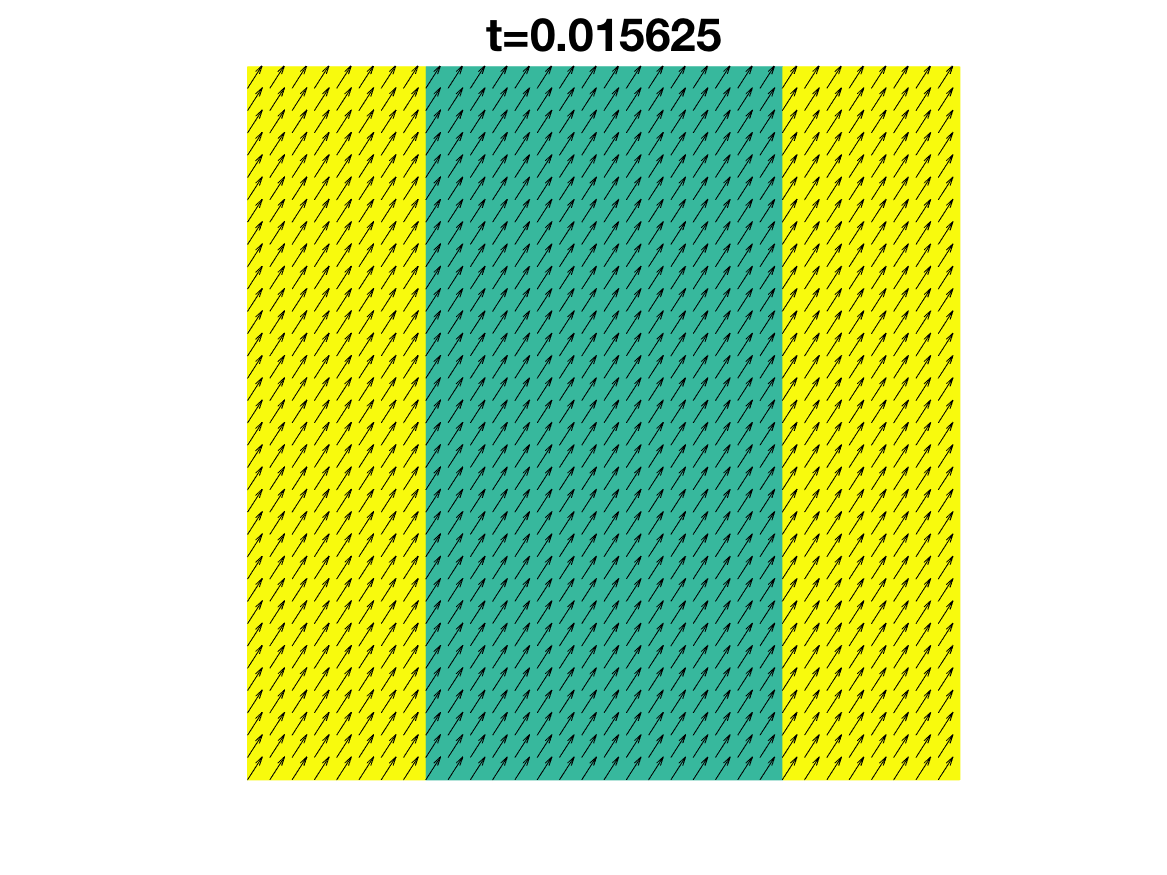} 
\includegraphics[width = 0.24 \textwidth,clip,trim= 2cm 1cm 3cm 0cm]{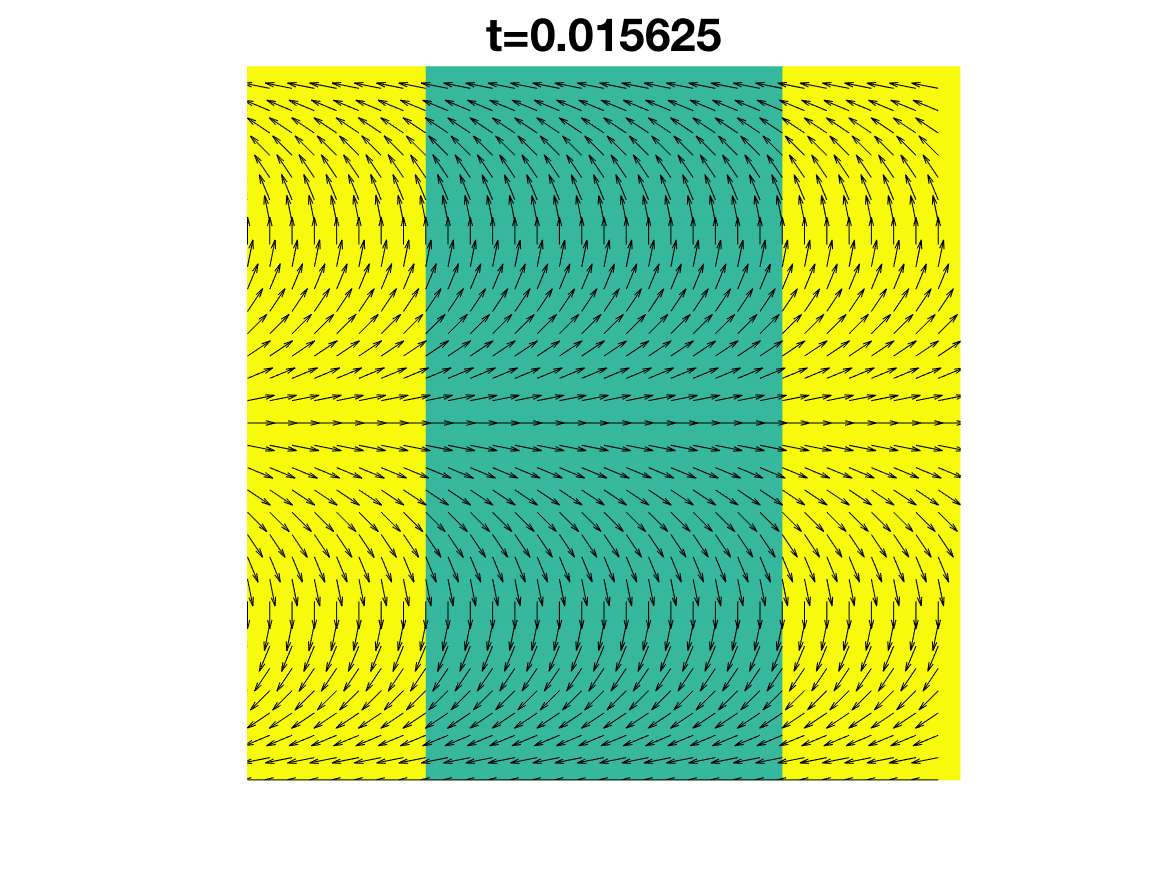} 
\includegraphics[width = 0.24 \textwidth,clip,trim= 2cm 1cm 3cm 0cm]{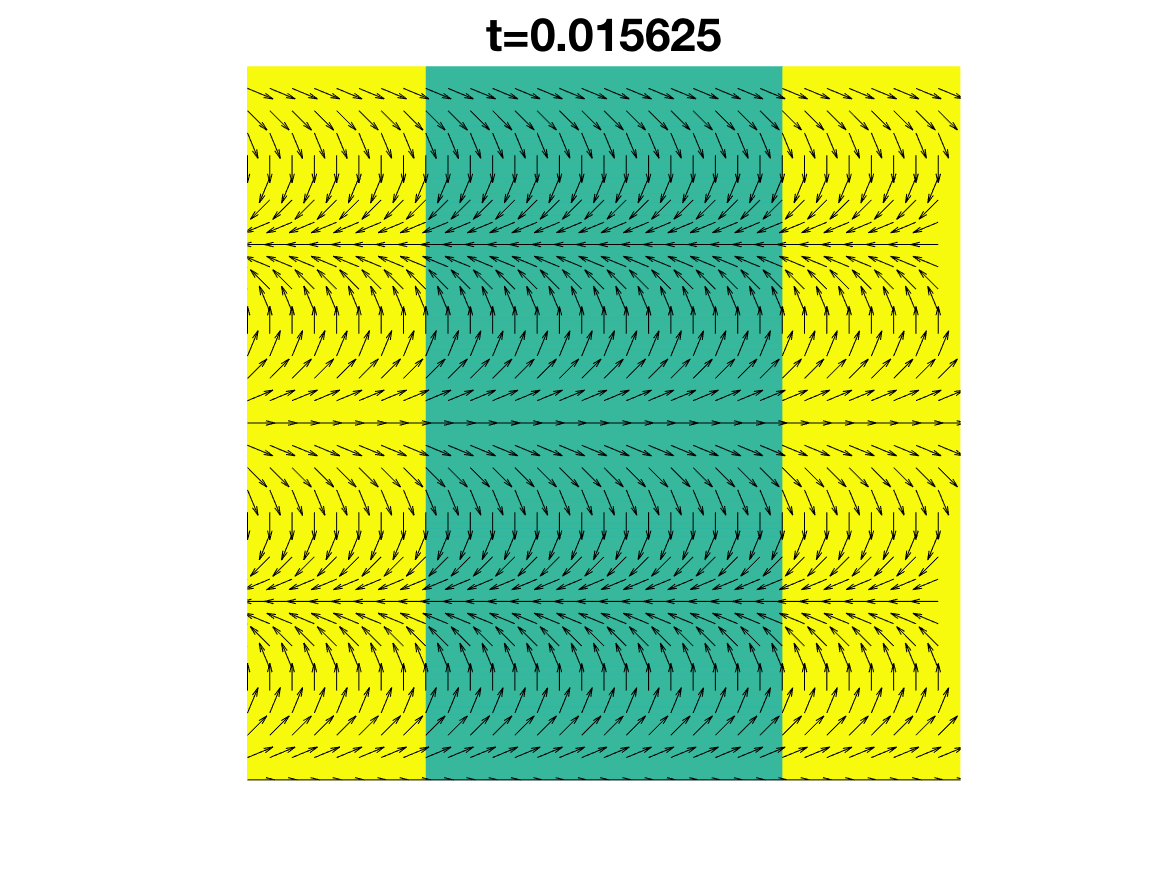} 
\includegraphics[width = 0.24 \textwidth,clip,trim= 2cm 1cm 3cm 0cm]{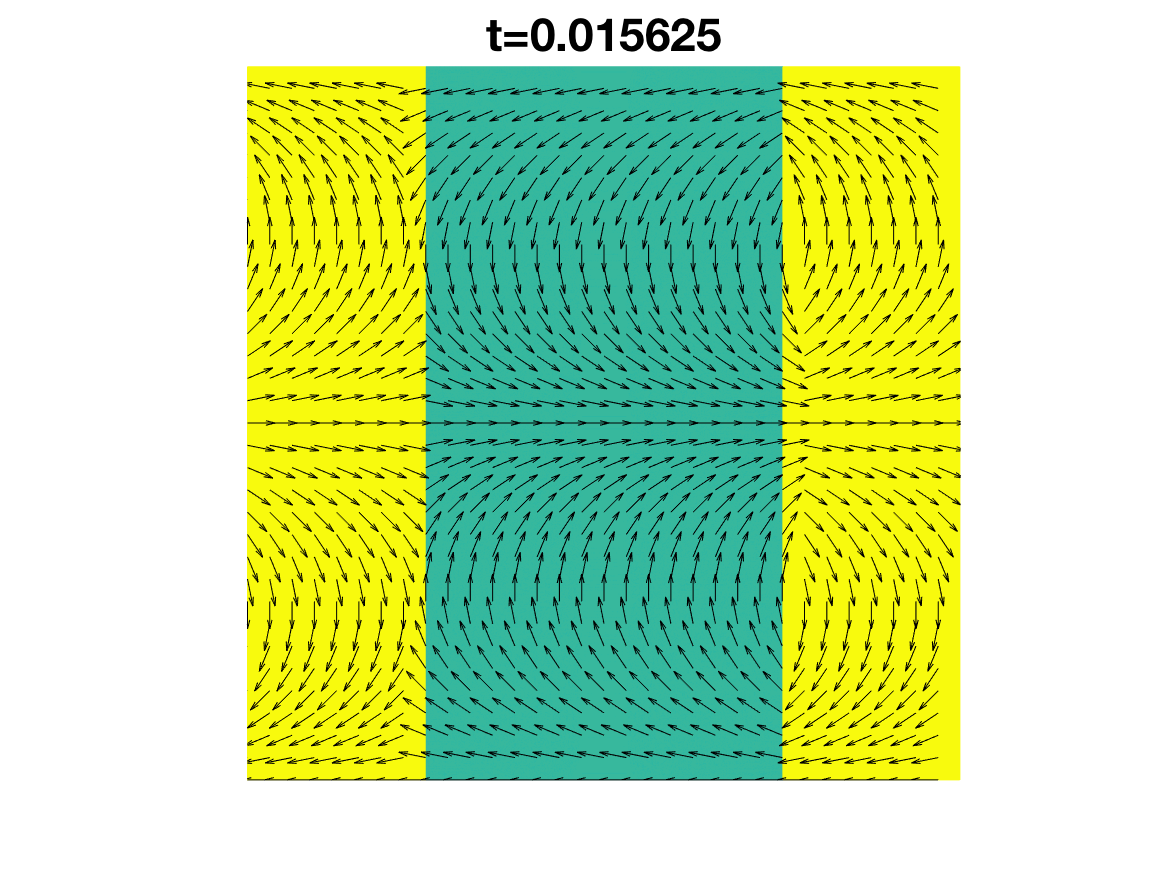}
\caption{Snapshots of the time evolution of an initial $O_2$ matrix-valued field with parallel line defects. From left to right, the initial fields are given in \eqref{eq:initialslow2} with $\eta_1(x_1,x_2) = \eta_2(x_1,x_2) = 1, 2\pi x_1, 4\pi x_1$, and $\eta_1(x_1,x_2) = 2 \pi x_1,  \eta_2(x_1,x_2) = -2 \pi x_1$. See Section~\ref{sec:doubleslow}.} \label{fig:4}
\end{figure}

Figures~\ref{fig:5} and \ref{fig:6} display snapshots of the time dynamics of the interface at different times for two choices of the phases $\eta_1$ and $\eta_2$. 
In Figure~\ref{fig:5}, we set the initial phases to be $\eta_1 = 2 \pi x_1$ and $\eta_2 = 4 \pi x_1$. 
In Figure~\ref{fig:6}, we set the initial phases to be $\eta_1 = 2 \pi x_1$ and $\eta_2 = 8 \pi x_1$. 
Thus, we have $[\eta_s^2]_\Gamma = -12 \pi^2$ in Figure~\ref{fig:5} and  $[\eta_s^2]_\Gamma = -60 \pi^2$ in Figure~\ref{fig:6}. In both figures, we observe that the straight line defects have nonzero speed along their normal directions. Comparing Figure~\ref{fig:5} and Figure~\ref{fig:6}, we observe that the speed of the line defect in Figure~\ref{fig:6} is about five  times the speed of the line defect in Figure~\ref{fig:5}. 
All these observations are consistent with our analytical results on the motion law \eqref{eq:motionlawslow} in Section~\ref{sec:doubleslow}.

Figure~\ref{fig:7} displays the snapshots of the time dynamics of the interface at different times where the initial phases are give by $\eta_1 = 8 \pi x_1$ and $\eta_2 = 2\pi x_1$. We observe the dynamics in Figure~\ref{fig:7} has opposite direction than in Figure~\ref{fig:6}.  This is also consistent with our analytical result on the motion law \eqref{eq:motionlawslow} in Section~\ref{sec:doubleslow}.

\begin{figure}[ht]
\includegraphics[width = 0.24 \textwidth,clip,trim= 5cm 1cm 5cm 0cm]{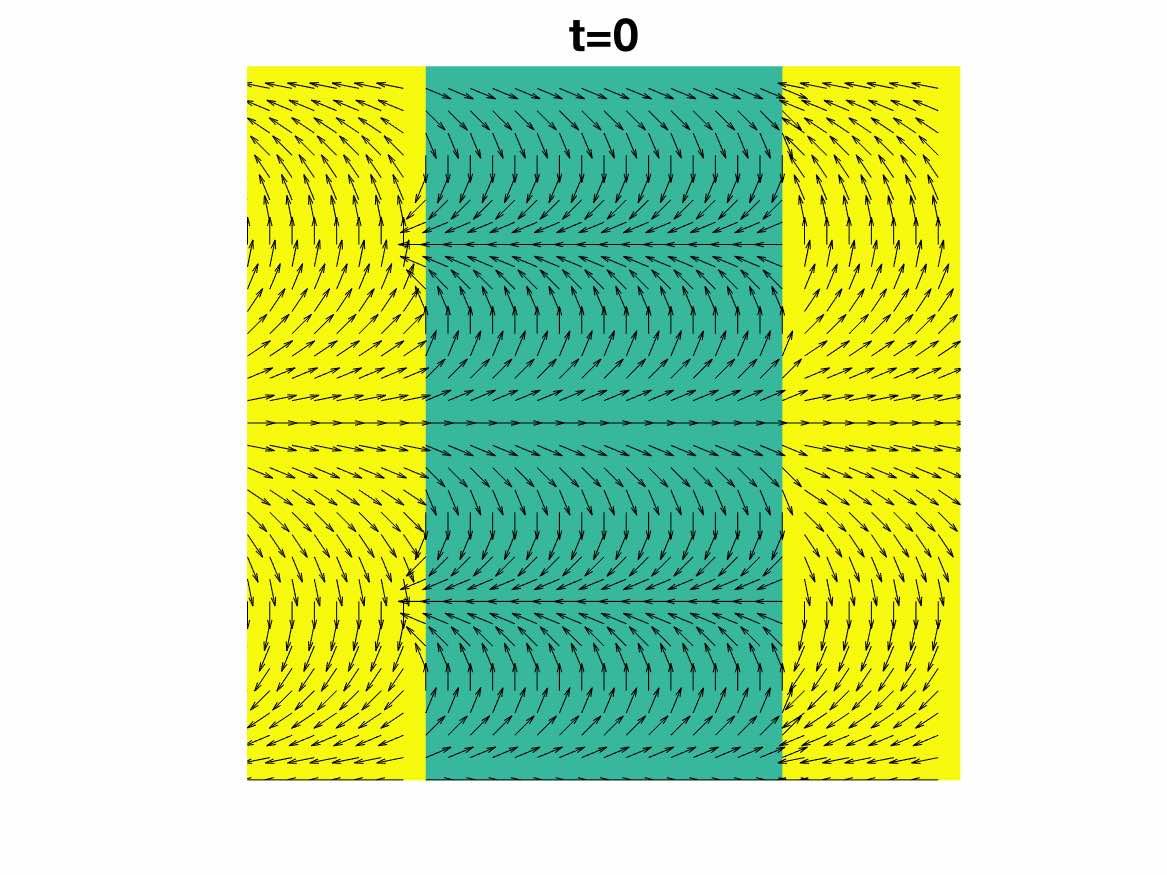}
\includegraphics[width = 0.24 \textwidth,clip,trim= 5cm 1cm 5cm 0cm]{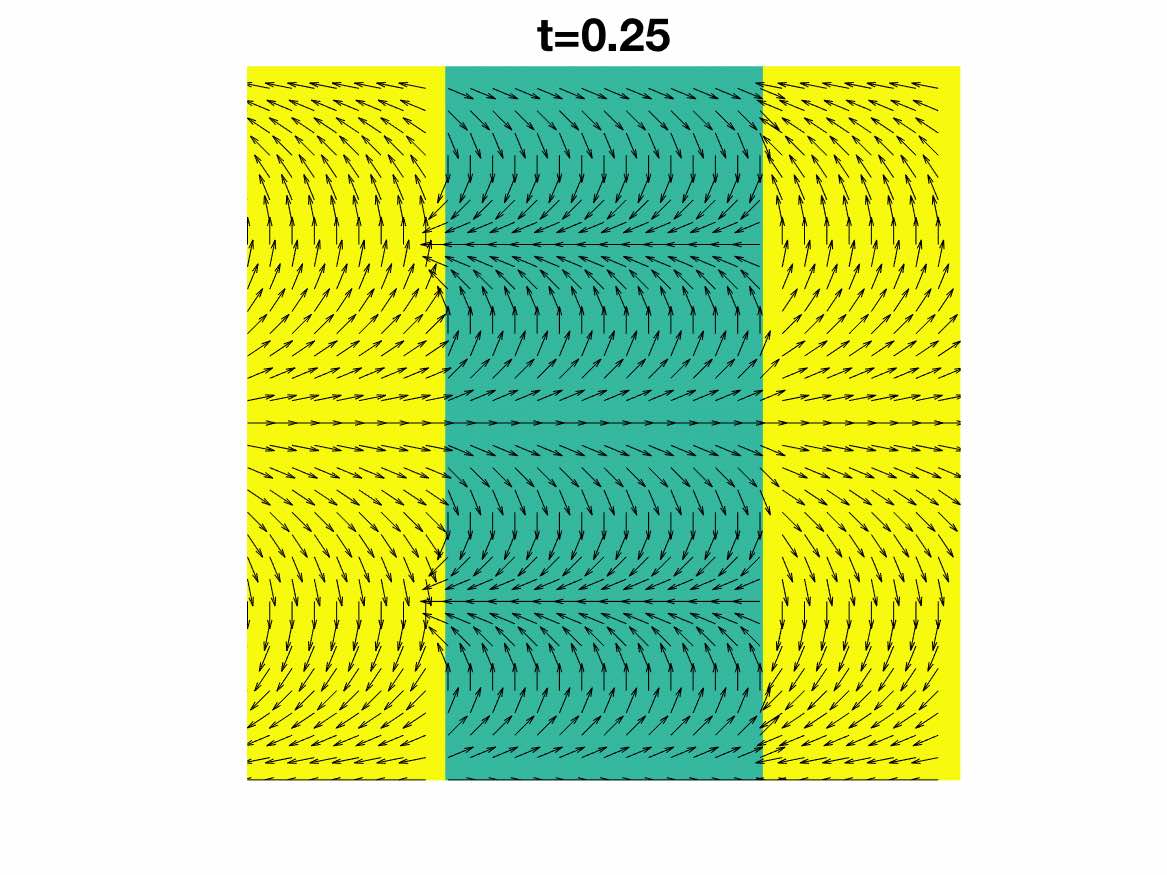}
\includegraphics[width = 0.24 \textwidth,clip,trim= 5cm 1cm 5cm 0cm]{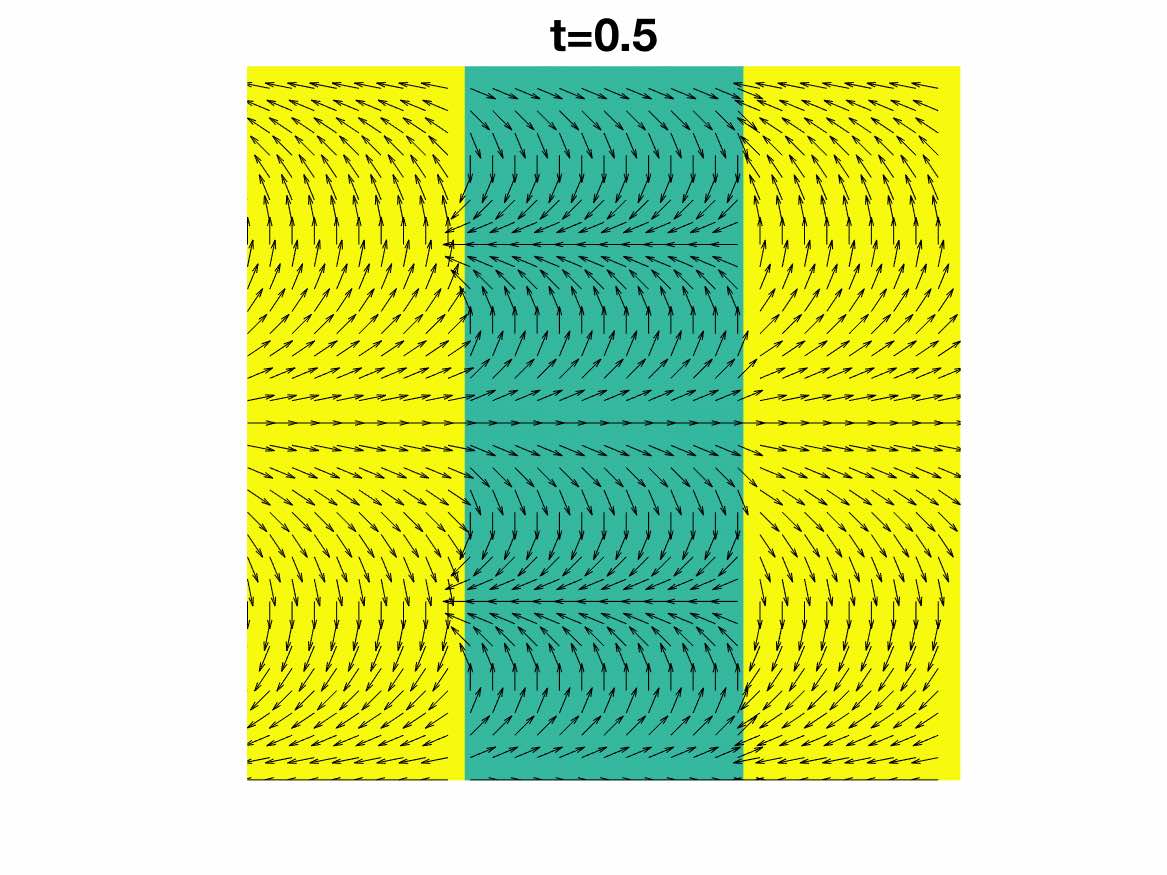}
\includegraphics[width = 0.24 \textwidth,clip,trim= 5cm 1cm 5cm 0cm]{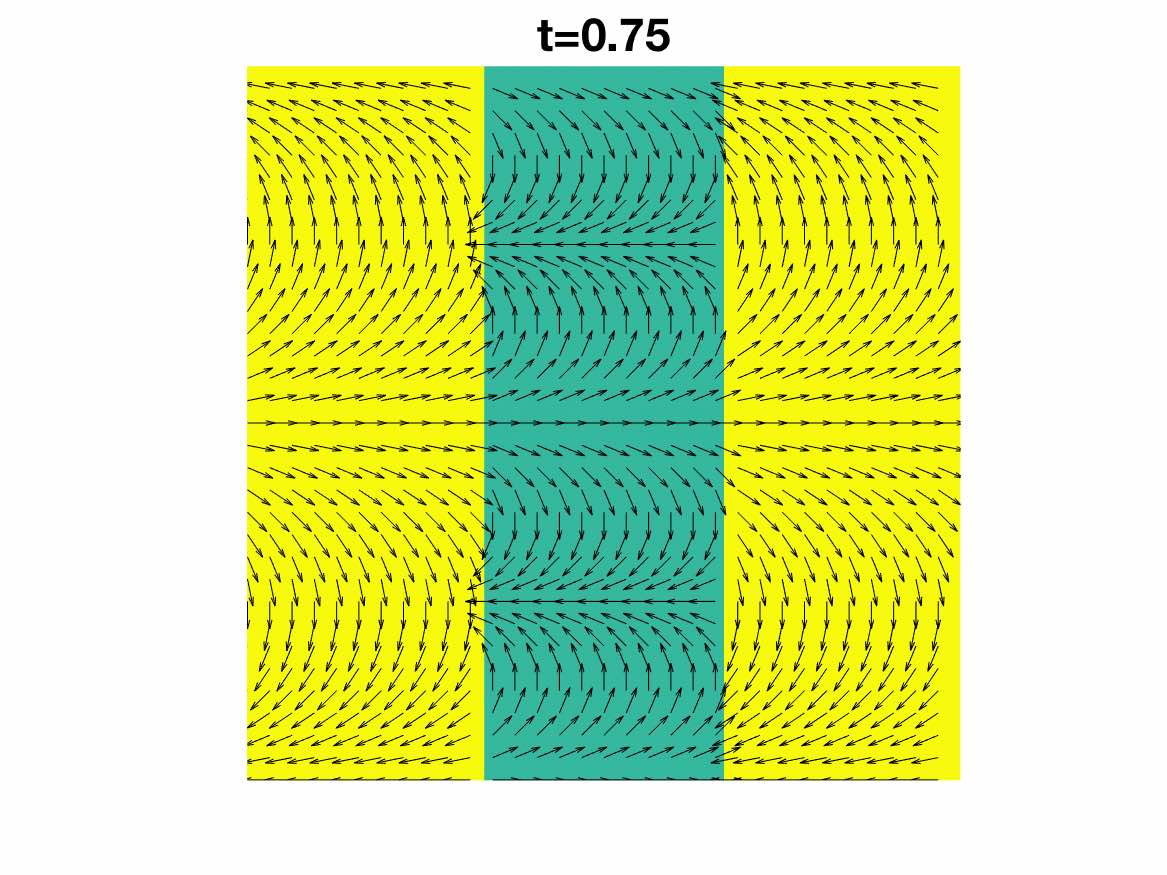}\\
\includegraphics[width = 0.24 \textwidth,clip,trim= 5cm 1cm 5cm 0cm]{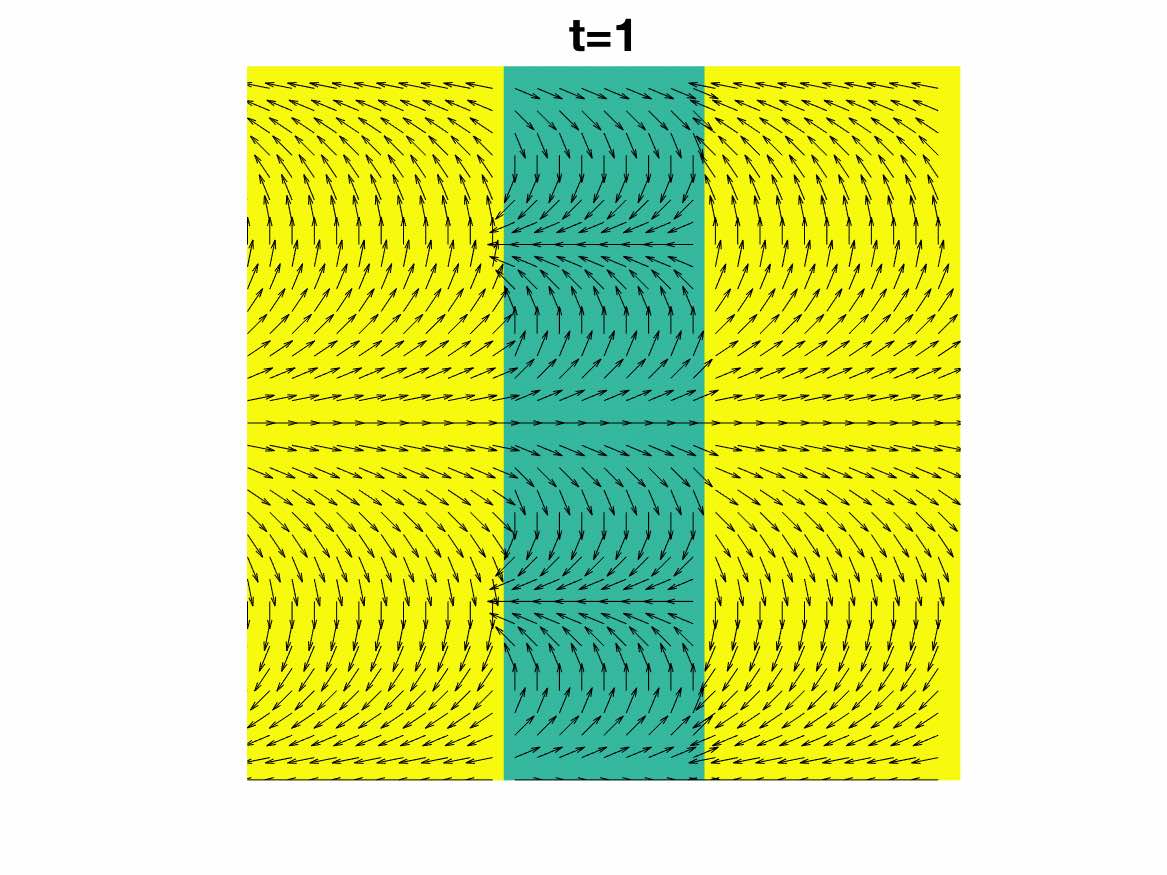}
\includegraphics[width = 0.24 \textwidth,clip,trim= 5cm 1cm 5cm 0cm]{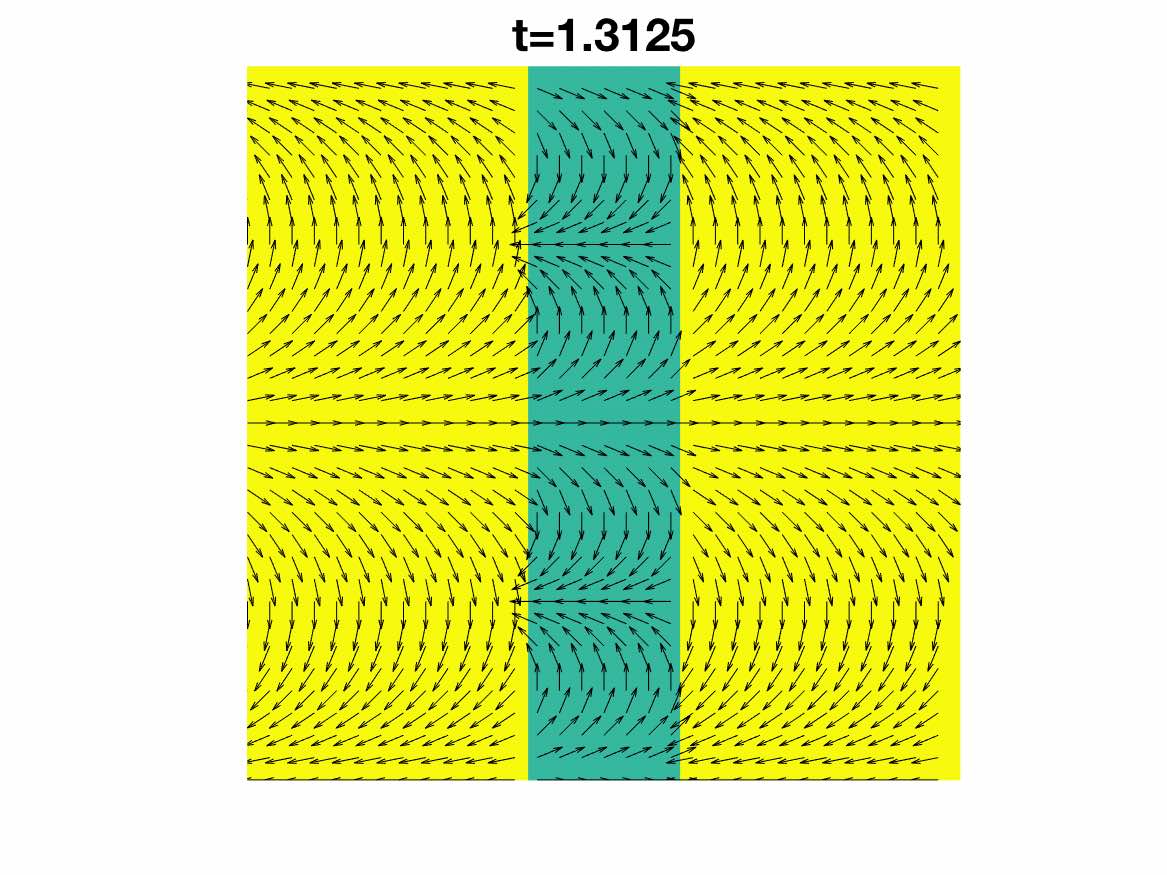}
\includegraphics[width = 0.24 \textwidth,clip,trim= 5cm 1cm 5cm 0cm]{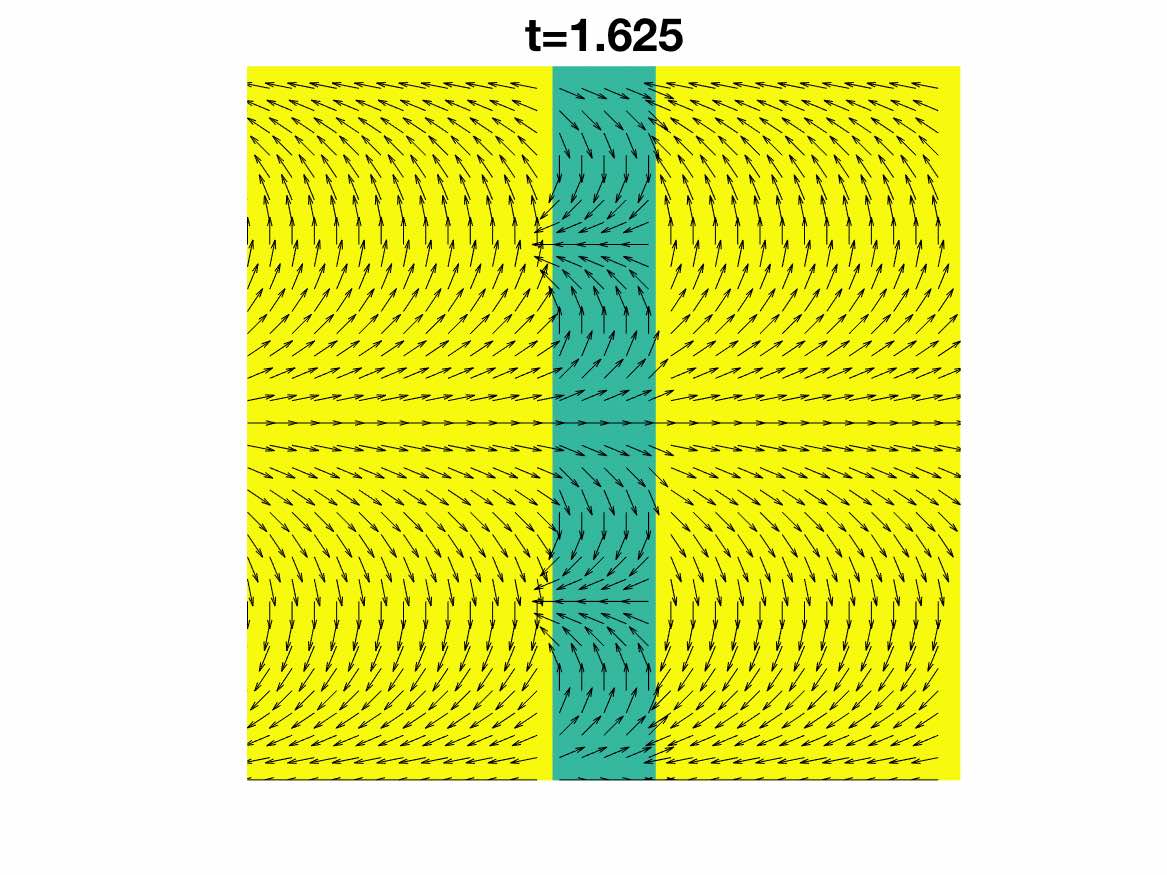}
\includegraphics[width = 0.24 \textwidth,clip,trim= 5cm 1cm 5cm 0cm]{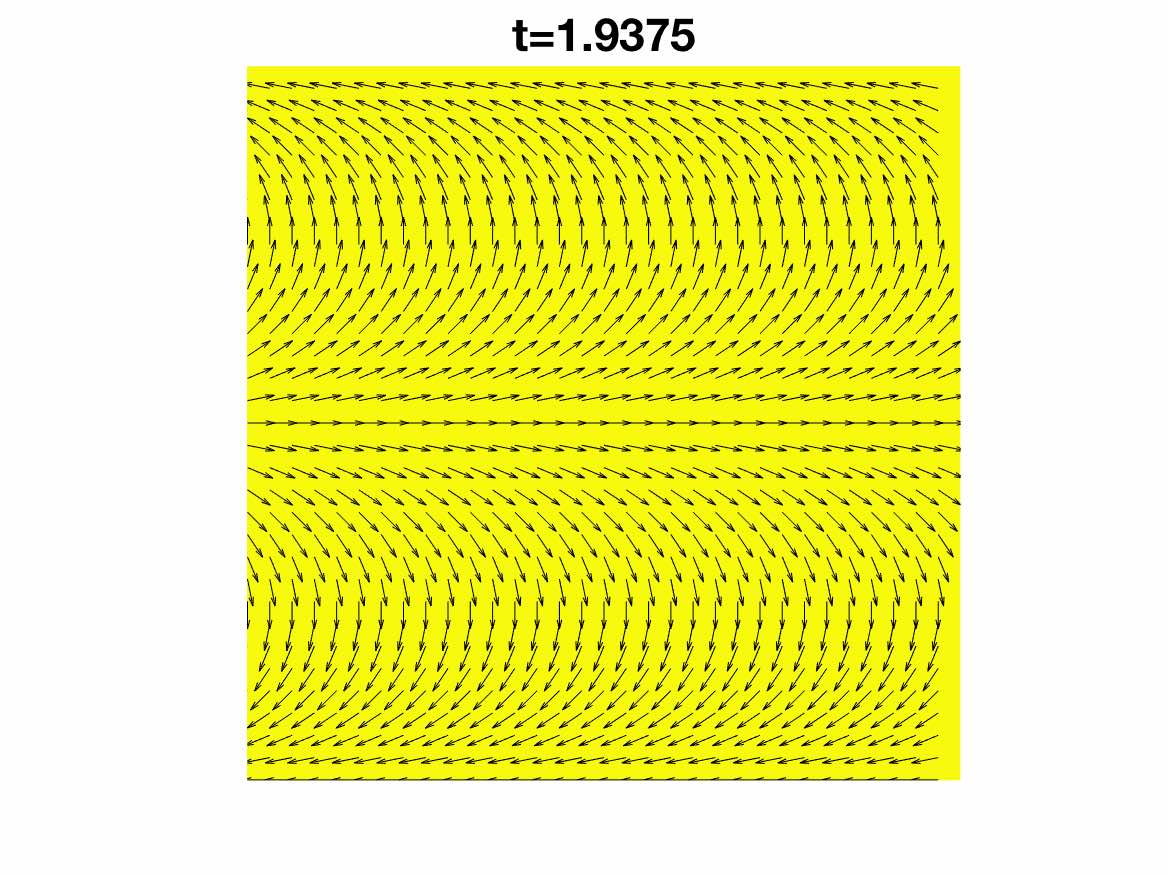}
\caption{Snapshots of the time evolution of an initial $O_2$ matrix-valued field with parallel line defects. The initial field is given in \eqref{eq:initialslow2} with $\eta_1 = 2 \pi x_1$ and $\eta_2 = 4\pi x_1$. See Section~\ref{sec:doubleslow}.}\label{fig:5}
\end{figure}

\begin{figure}[ht]
\includegraphics[width = 0.24 \textwidth,clip,trim= 5cm 1cm 5cm 0cm]{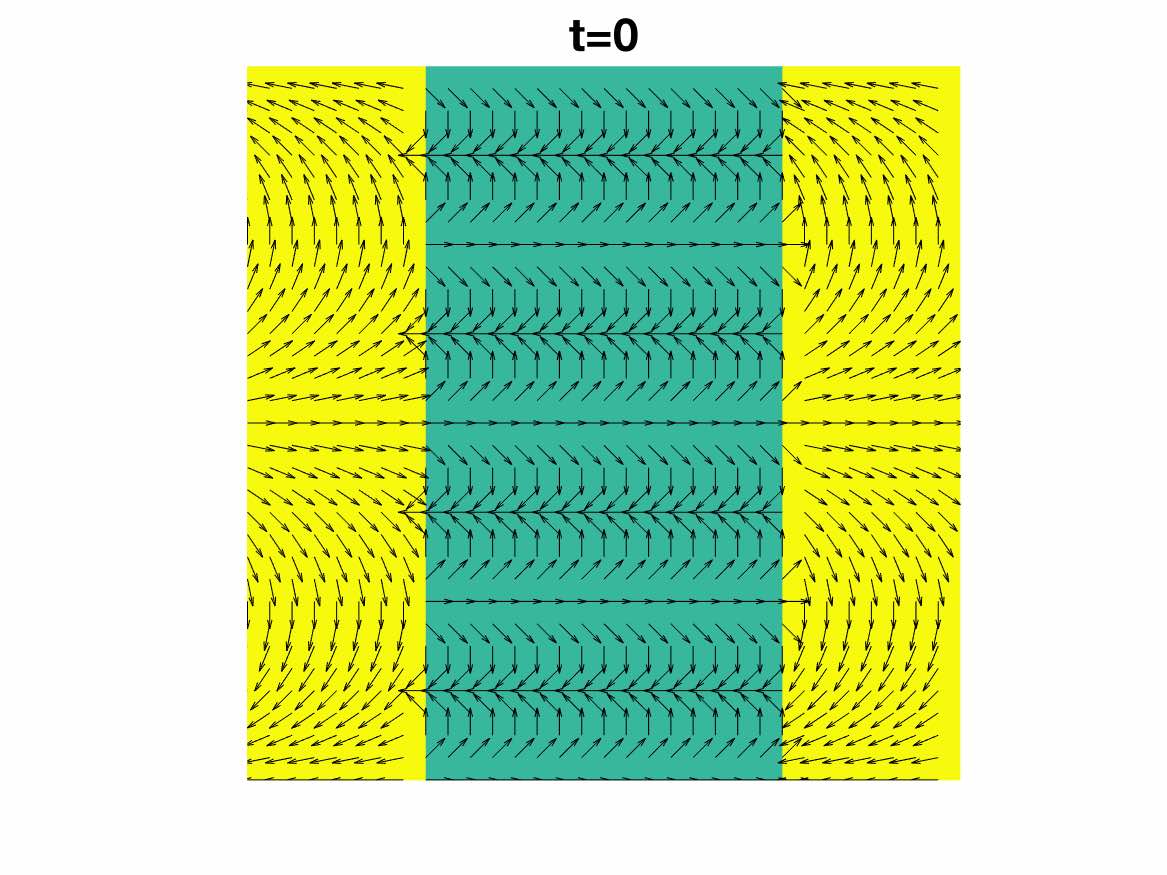}
\includegraphics[width = 0.24 \textwidth,clip,trim= 5cm 1cm 5cm 0cm]{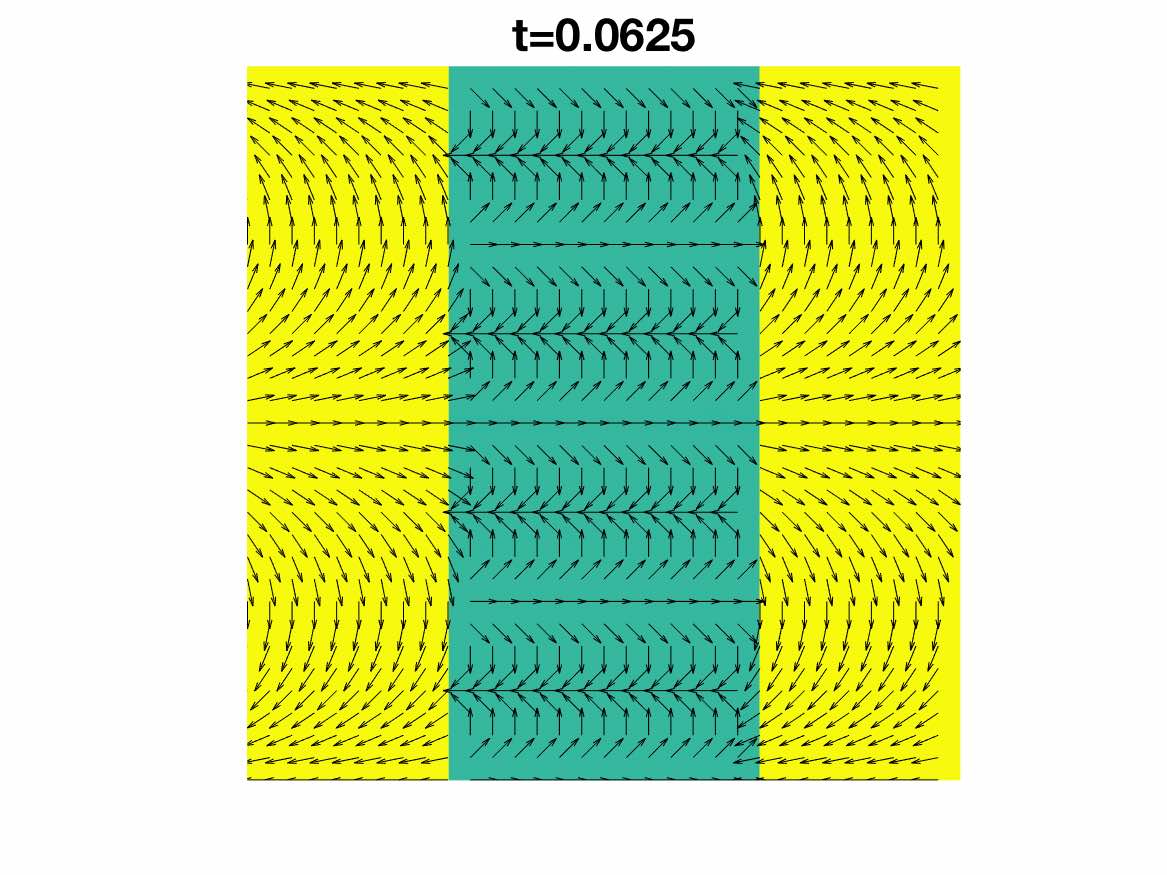}
\includegraphics[width = 0.24 \textwidth,clip,trim= 5cm 1cm 5cm 0cm]{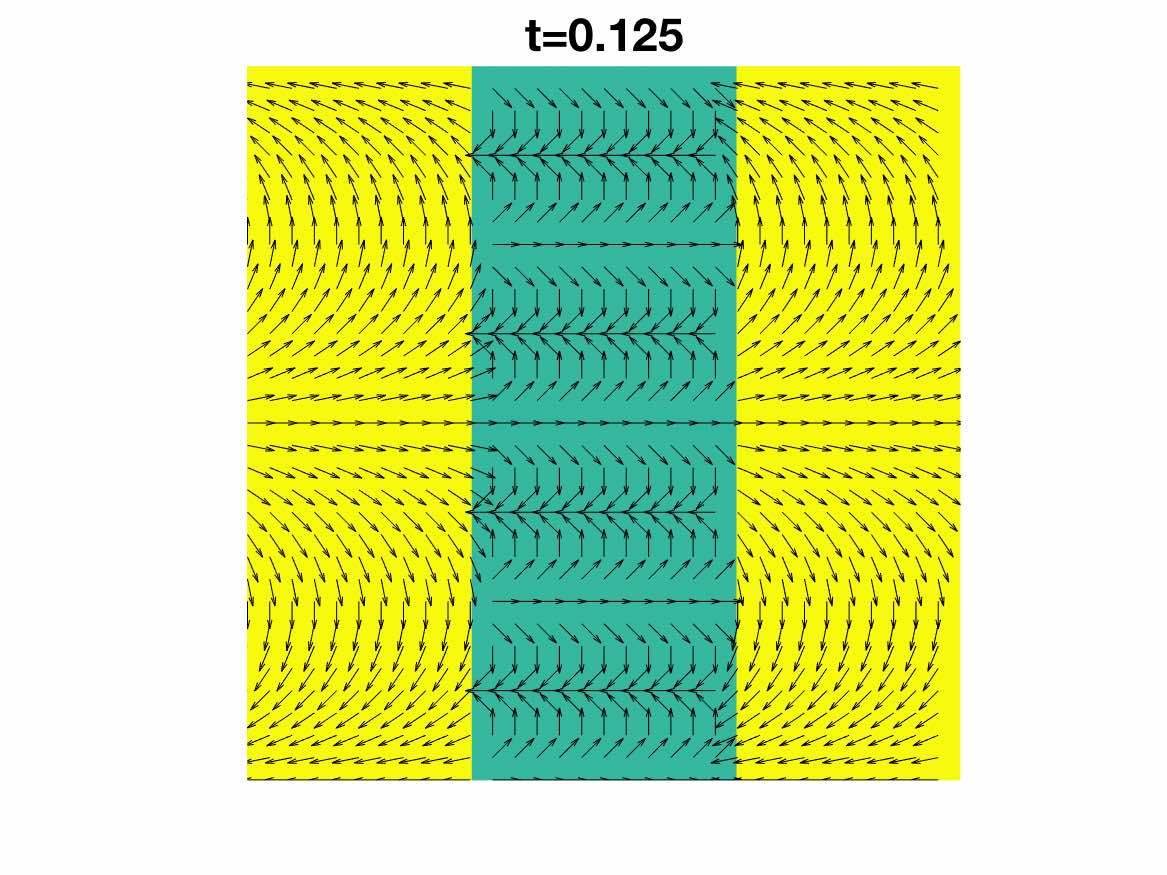}
\includegraphics[width = 0.24 \textwidth,clip,trim= 5cm 1cm 5cm 0cm]{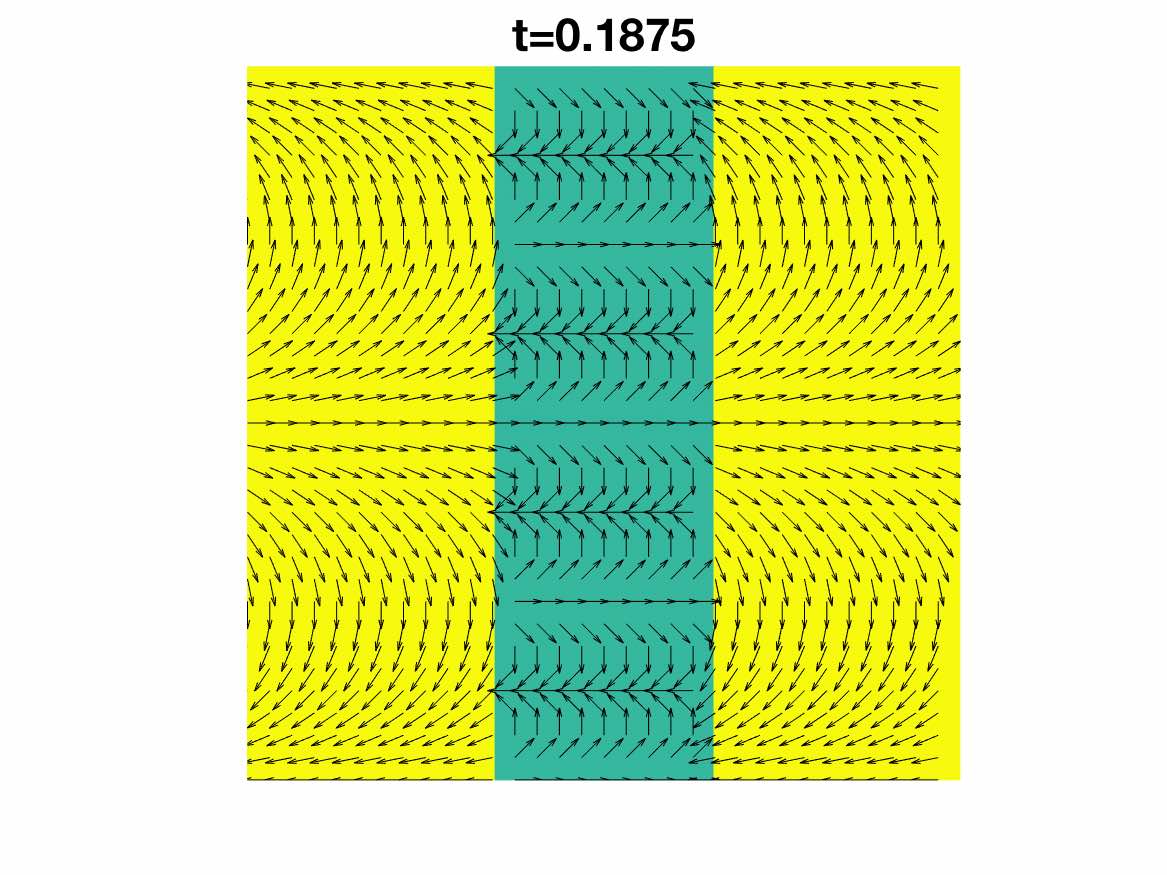}\\
\includegraphics[width = 0.24 \textwidth,clip,trim= 5cm 1cm 5cm 0cm]{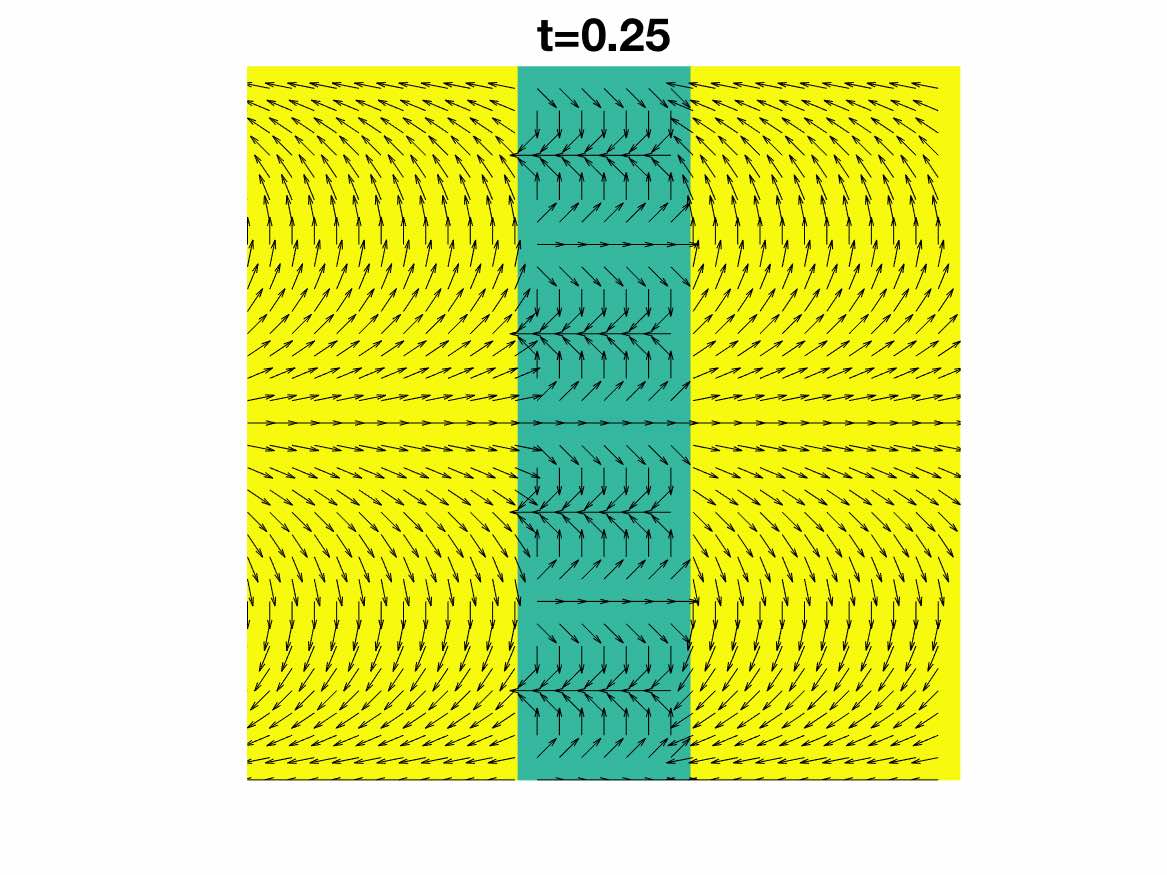}
\includegraphics[width = 0.24 \textwidth,clip,trim= 5cm 1cm 5cm 0cm]{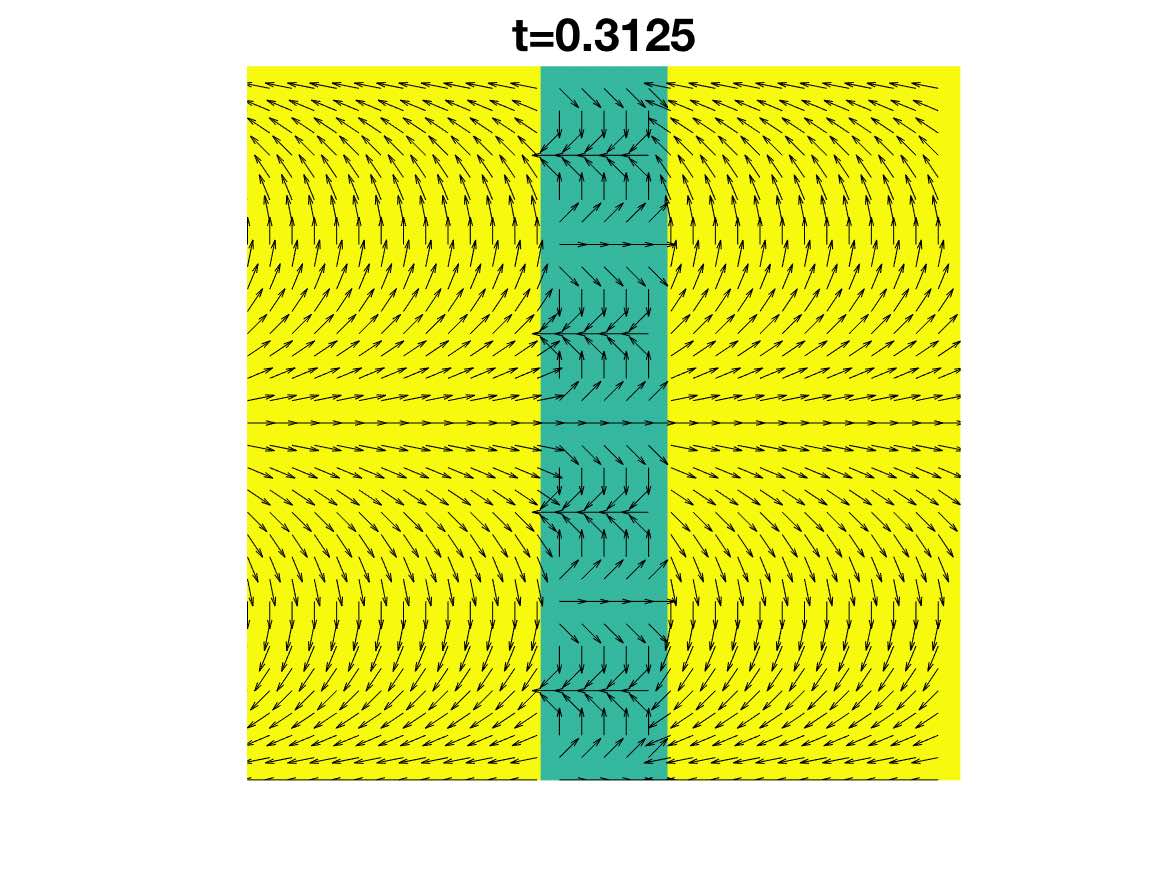}
\includegraphics[width = 0.24 \textwidth,clip,trim= 5cm 1cm 5cm 0cm]{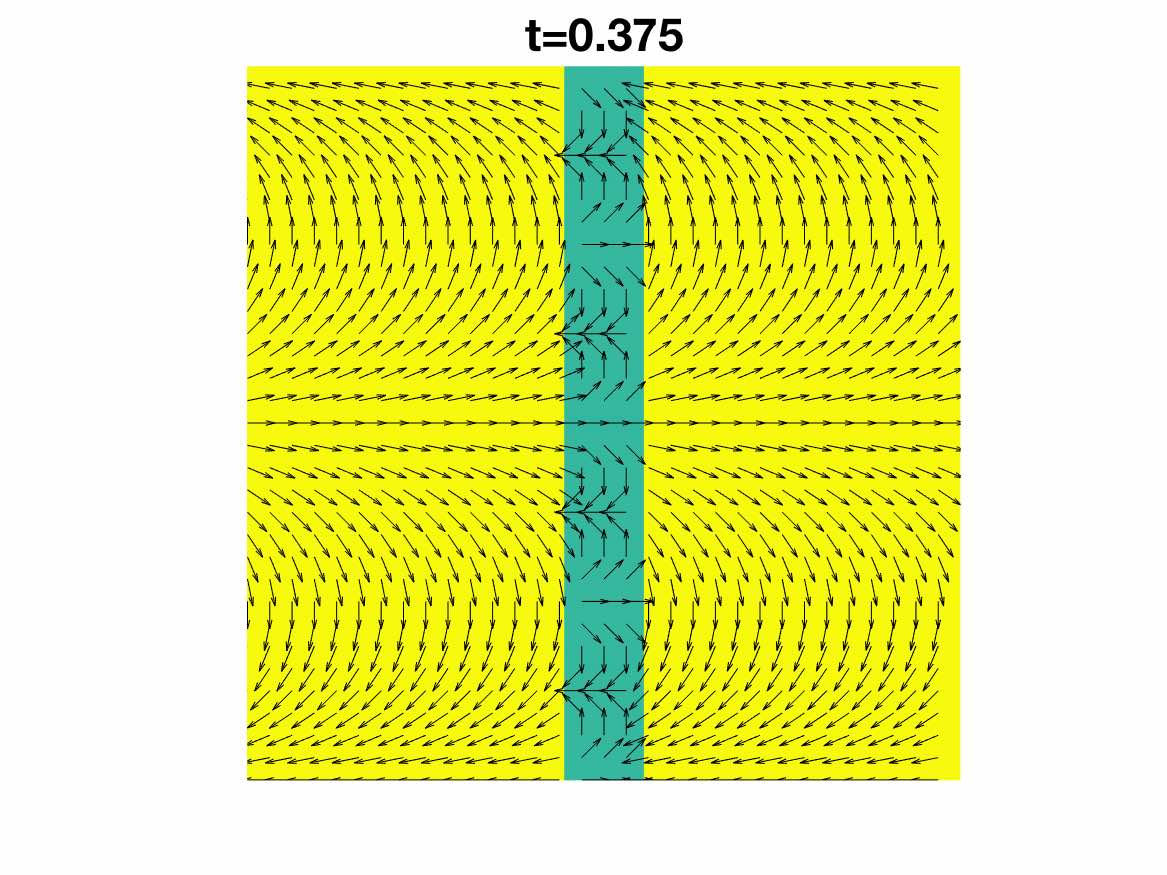}
\includegraphics[width = 0.24 \textwidth,clip,trim= 5cm 1cm 5cm 0cm]{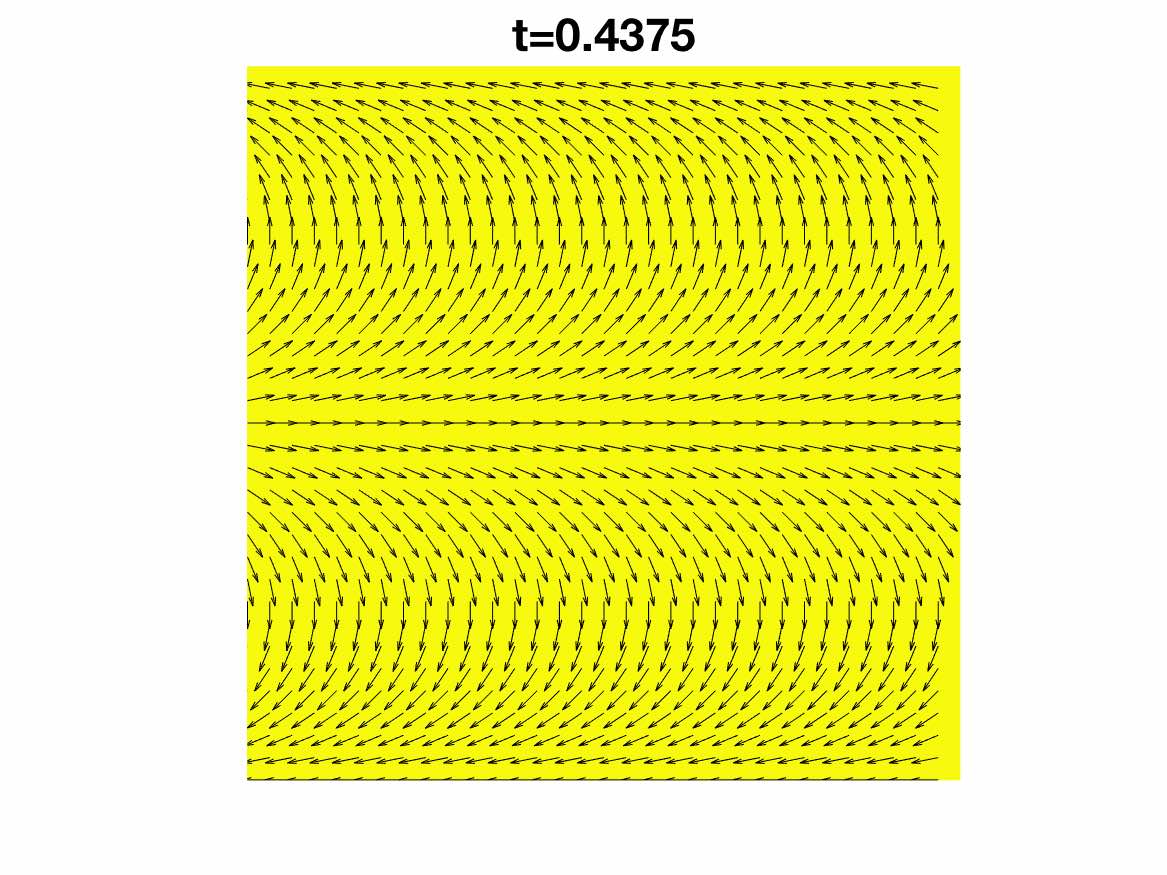}
\caption{Snapshots of the time evolution of an initial $O_2$ matrix-valued field with parallel line defects. The initial field is given in \eqref{eq:initialslow2} with $\eta_1 = 2 \pi x_1$ and $\eta_2 = 8\pi x_1$. See Section~\ref{sec:doubleslow}.}\label{fig:6}
\end{figure}

\begin{figure}[ht]
\includegraphics[width = 0.24 \textwidth,clip,trim= 5cm 1cm 5cm 0cm]{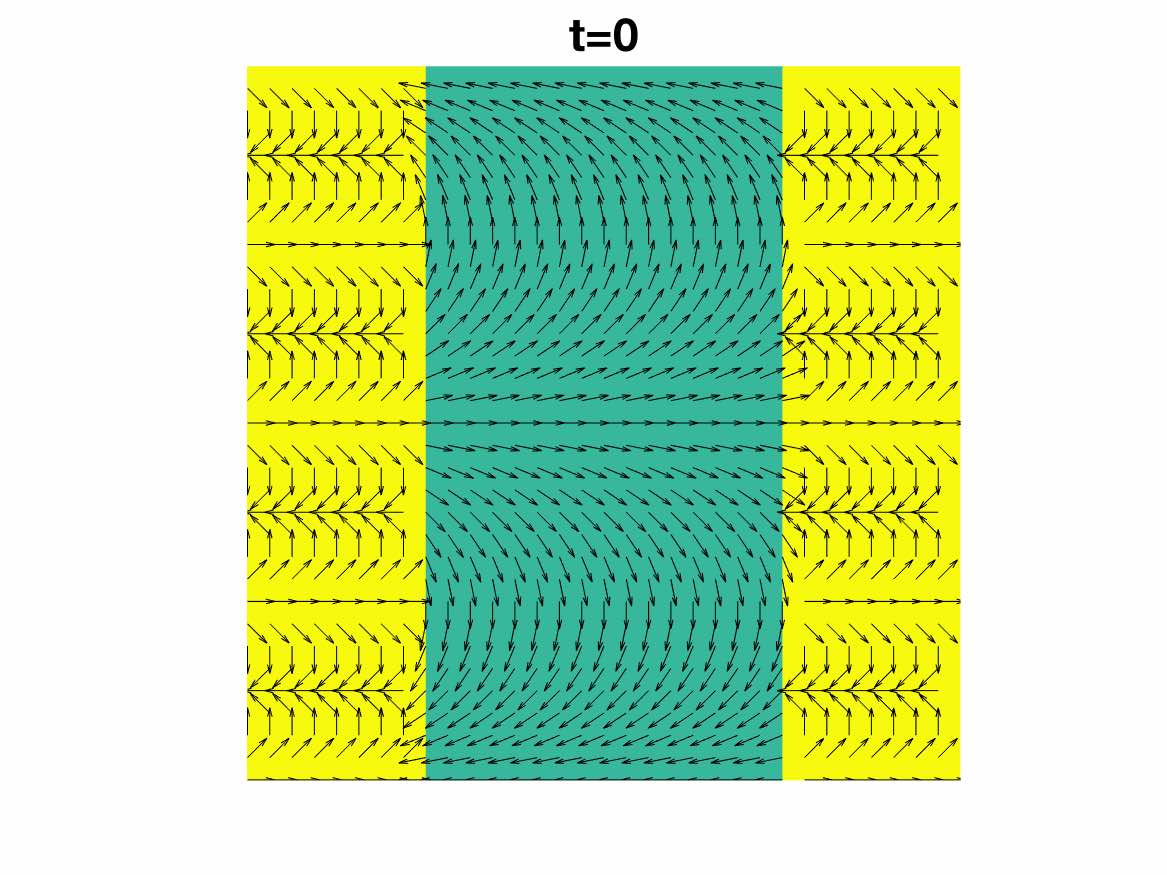}
\includegraphics[width = 0.24 \textwidth,clip,trim= 5cm 1cm 5cm 0cm]{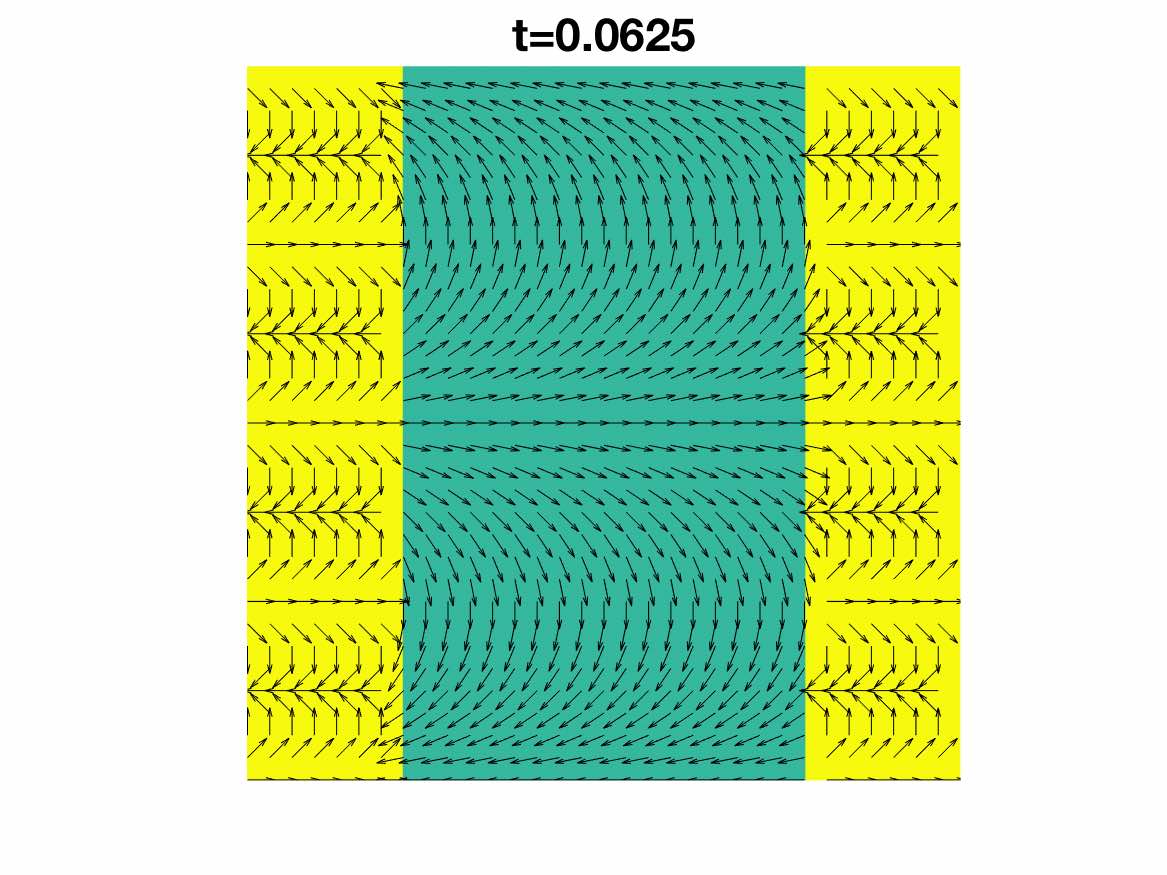}
\includegraphics[width = 0.24 \textwidth,clip,trim= 5cm 1cm 5cm 0cm]{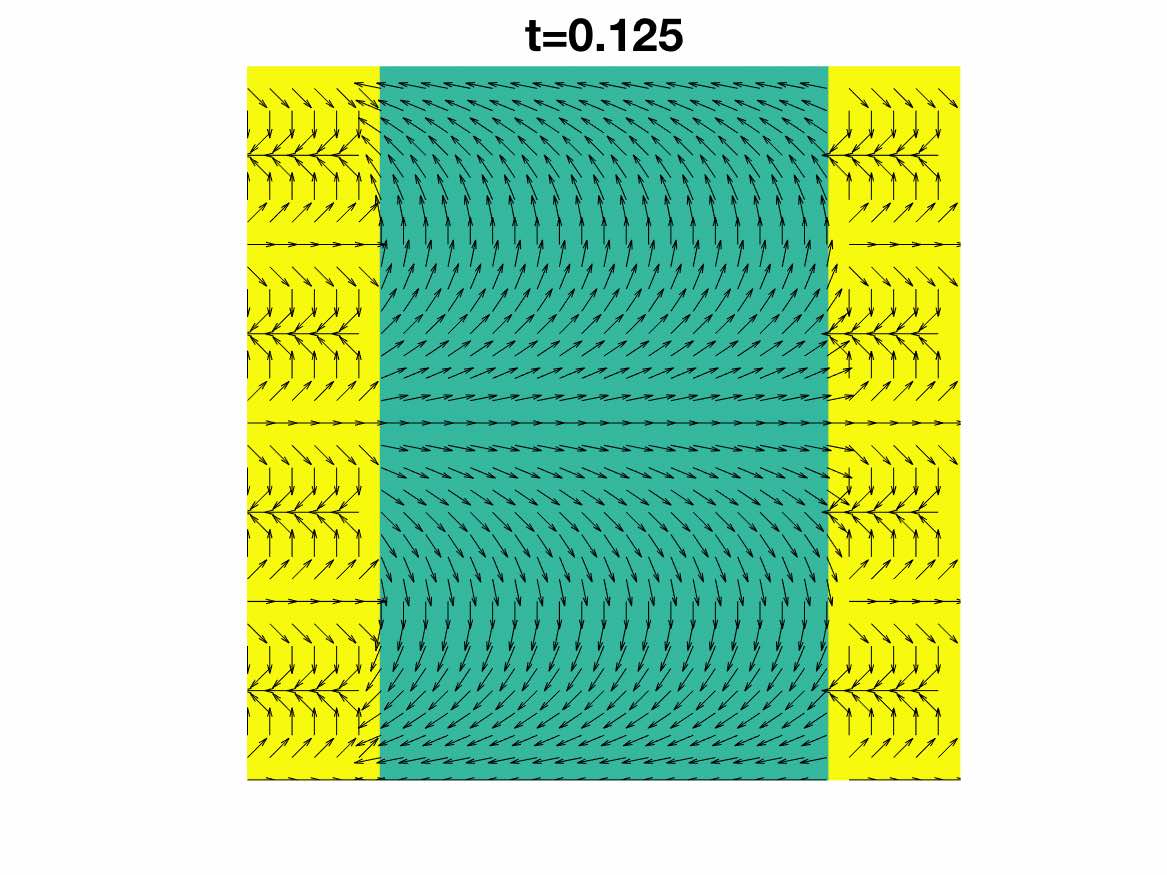}
\includegraphics[width = 0.24 \textwidth,clip,trim= 5cm 1cm 5cm 0cm]{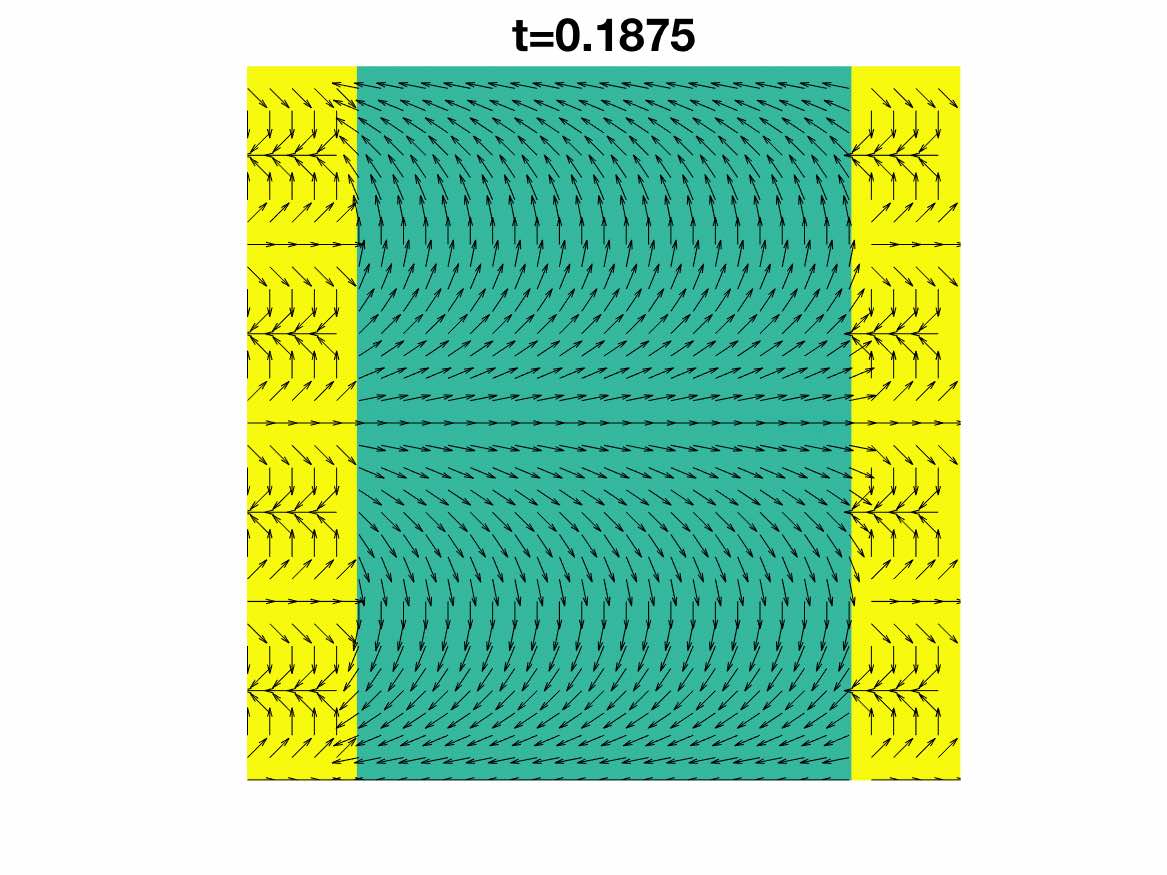}\\
\includegraphics[width = 0.24 \textwidth,clip,trim= 5cm 1cm 5cm 0cm]{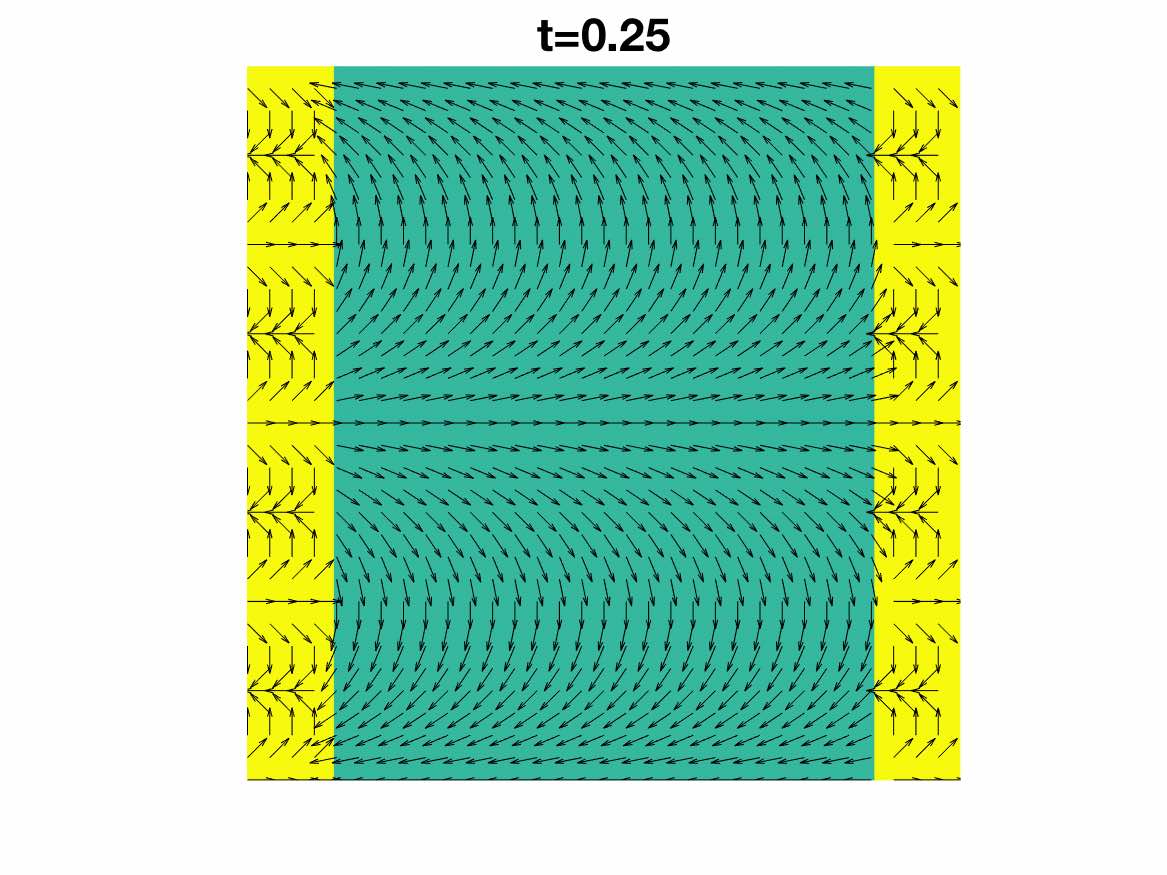}
\includegraphics[width = 0.24 \textwidth,clip,trim= 5cm 1cm 5cm 0cm]{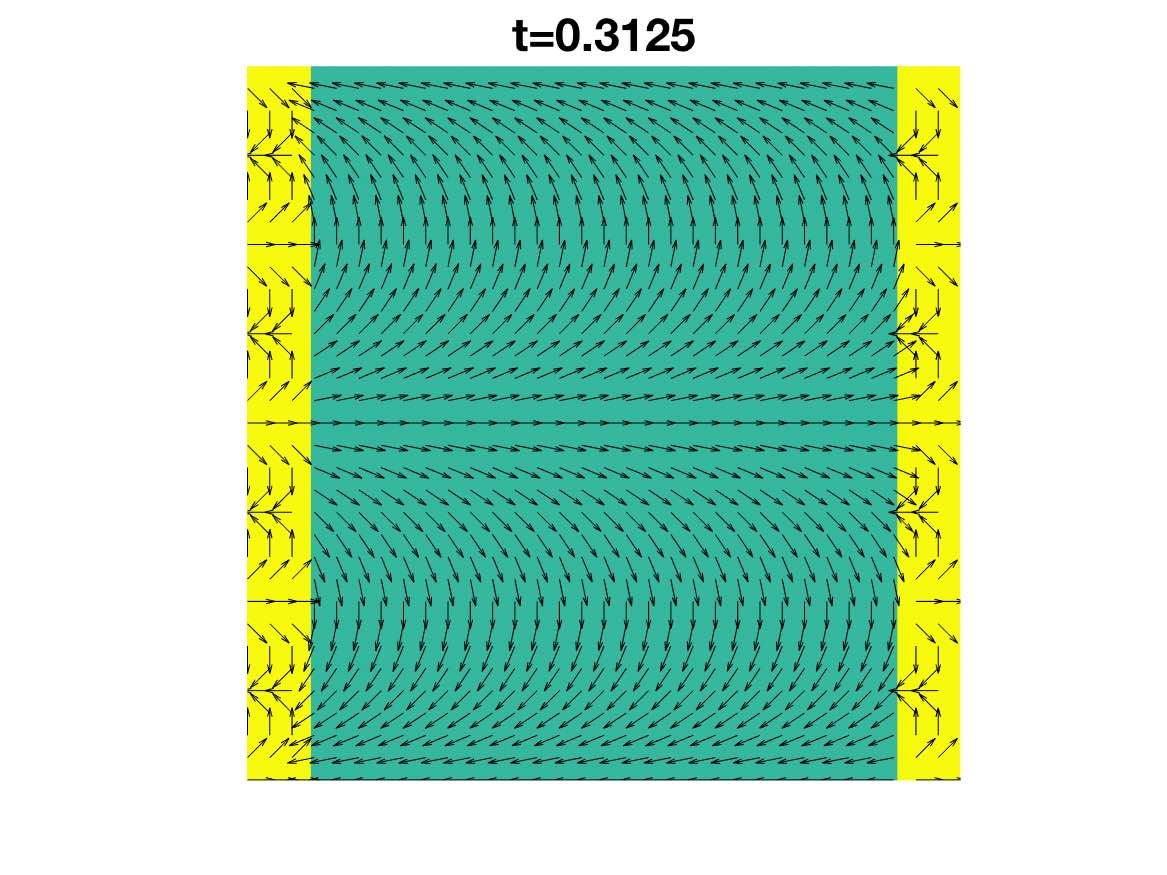}
\includegraphics[width = 0.24 \textwidth,clip,trim= 5cm 1cm 5cm 0cm]{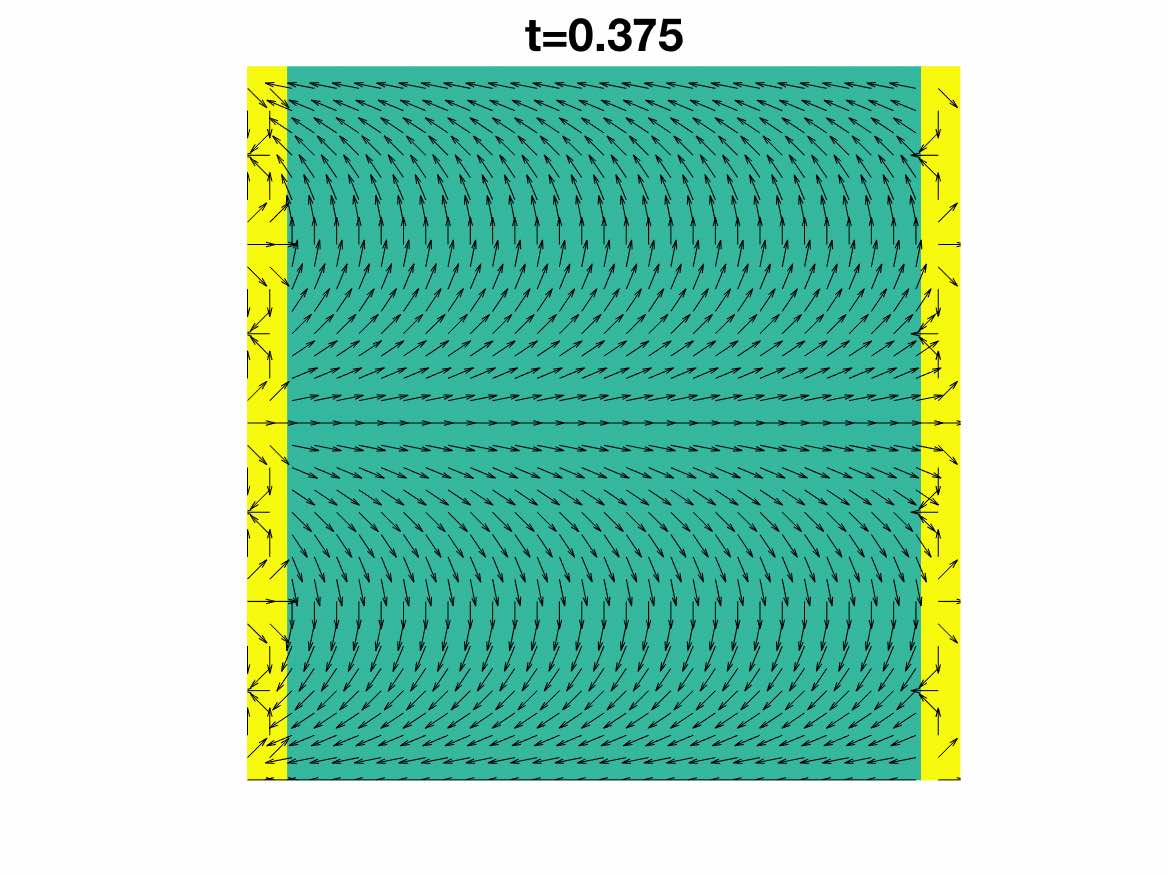}
\includegraphics[width = 0.24 \textwidth,clip,trim= 5cm 1cm 5cm 0cm]{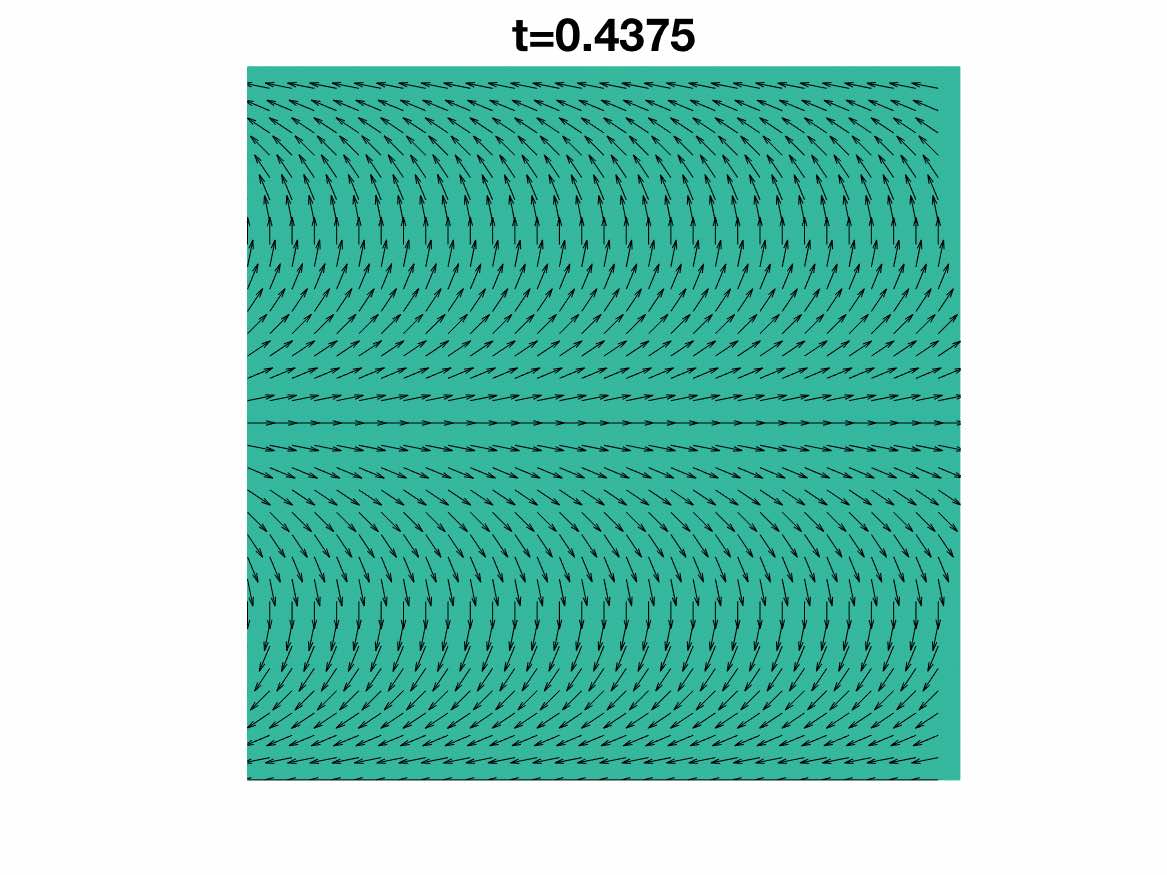}
\caption{Snapshots of the time evolution of an initial $O_2$ matrix-valued field with parallel line defects. The initial field is given in \eqref{eq:initialslow2} with $\eta_1 = 8 \pi x_1$ and $\eta_2 = 2\pi x_1$. See Section~\ref{sec:doubleslow}.}\label{fig:7}
\end{figure}

\section{Conclusion and discussion}\label{sec:con}
In this paper, we used  asymptotic methods to study the initial value problem for the generalized Allen-Cahn equation in \eqref{e:orig}. 
If the initial condition has single-signed determinant, at each point of the domain, at a fast $O(\varepsilon^{-2} t)$ time scale, the solution evolves towards the closest orthogonal matrix. Then, at the $O(t)$ time scale, the solution evolves according to the $O_n$ diffusion equation \eqref{e:OnDiffEqa}. Stationary solutions to the  $O_n$ diffusion equation were analyzed for $n=2$ in Section~\ref{s:Harmonic}. 
If the initial condition has regions where the determinant is positive and  negative, an interface develops. Away from the interface, in each region, the matrix-valued field behaves as in the single-signed determinant case. 
At the $O(t)$ time scale, the interface evolves in the normal direction by curvature. 
At a slow $O(\varepsilon t)$ time scale, for $n=2$, the interface is driven by curvature and the jump in the squared tangental derivative of the phase across the interface.
In Section~\ref{sec:num}, we conducted a variety of numerical experiments  to verify, support, and illustrate our analytical results. 

In this paper, we have focused on the two-dimensional problem. We expect that the asymptotic methods in \cite{Dai_2012,Dai_2014} could be used to study  higher-dimensional problems. In this paper, we also only focused on a square with periodic boundary conditions. We used this to derive harmonic orthogonal matrix-valued-fields in Section~\ref{s:Harmonic} and in the numerical examples in  Section~\ref{sec:num}. However, the asymptotic results from Sections~\ref{sec:part1} and \ref{sec:part2}  apply to other boundary conditions as well. 

There are several places where we focused on the $n=2$ case. In particular, 
in Section~\ref{s:Harmonic}, we derived explicit $O_2$ harmonic fields; 
in Proposition~\ref{p:MatrixTanhProfile} we explicitly derived the transition profile for the boundary layer; 
and in Proposition~\ref{p:SlowMotionLaw}, we were able to simplify the expression for the motion law at the slow  $O(\varepsilon t)$ time scale in terms of the jump in the squared tangental derivative of the phase across the interface.
It would be interesting to extend these results to $n\geq 3$. 

Here, we have considered the $L^2$ gradient flow \eqref{e:orig} of the energy $E$ in \eqref{e:energy}.  An interesting equation would arise from considering the $H^{-1}$ gradient flow of $E$, a generalization of the Cahn--Hilliard equation \cite{Pego_1989,Dai_2012,Dai_2014,Cahn_2013,Chen_2014,Wang_2017}. 
Another way to generalize  \eqref{e:orig} would be to consider multi-phase systems  as in \cite{Rubinstein_1989,Bronsard_1993}.

%\clearpage

\printbibliography
\end{document}